\documentclass[sn-mathphys-num]{sn-jnl}

\usepackage{enumitem}
\usepackage{graphicx}
\usepackage{multirow}
\usepackage{amsmath,amssymb,amsfonts}
\usepackage{amsthm}
\usepackage{mathrsfs}
\usepackage[title]{appendix}
\usepackage{xcolor}
\usepackage{textcomp}
\usepackage{manyfoot}
\usepackage{booktabs}
\usepackage{algorithm}
\usepackage{algorithmicx}
\usepackage{algpseudocode}
\usepackage{listings}
\usepackage{dsfont}

\newtheorem{theo}{Theorem}[section]

\newtheorem{coro}[theo]{Corollary}
\newtheorem{lem}[theo]{Lemma}

\theoremstyle{definition}
\newtheorem{defi}[theo]{Definition}
\newtheorem{ex}[theo]{Example}
\newtheorem{rem}[theo]{Remark}

\newtheorem*{nota}{Notation}
\newtheorem*{conjecture}{Conjecture}

\raggedbottom

\DeclareMathOperator{\Cov}{Cov}

\newcommand {\R}{\mathbb{R}}
\newcommand {\Z}{\mathbb{Z}}
\newcommand {\N}{\mathbb{N}}
\newcommand{\pP}{\mathbb{P}}

\newcommand{\E}{\mathbb{E}}

\newcommand{\CE}[2]{\mathbb{E}\left[#1\;|\;#2\right]}

\newcommand\norm[1]{\left\Vert#1\right\Vert}
\newcommand\abs[1]{\left|#1\right|}
\newcommand*{\bigchi}{\mbox{\scalebox{1.2}{$\chi$}}}
\newcommand{\cov}[2]{\mathrm{Cov}\left(#1,\,#2\right)}

\newcommand{\bb}[1]{\left(#1\right)}
\newcommand{\Cb}[1]{\left[#1\right]}
\newcommand{\cb}[1]{\left\{#1\right\}}
\newcommand{\desum}[2]{\sum_{
\begin{array}{c}
#1\\
#2
\end{array}
}}
\newcommand{\vn}{\varnothing}
\newcommand{\ubar}[1]{\underset{\bar{}}{#1}}
\newcommand{\ind}{\mathds{1}}

\begin{document}

\title[Article Title]{Time-Scaling, Ergodicity, and Covariance Decay of Interacting Particle Systems}

\author*[1]{\fnm{Maciej} \sur{Głuchowski}}\email{mj.gluchowski@student.uw.edu.pl}

\author[2]{\fnm{Georg} \sur{Menz}}\email{gmenz@math.ucla.edu}

\affil*[1]{\orgdiv{Faculty of Mathematics, Informatics and Mechanics}, \orgname{University of Warsaw}, \orgaddress{\street{Banacha 2}, \city{Warsaw}, \postcode{02-097}, \country{Poland}}}

\affil[2]{\orgdiv{Mathematics Department}, \orgname{University of California Los Angeles}, \orgaddress{\city{Los Angeles}, \postcode{90095-1555}, \state{California}, \country{USA}}}

\abstract{

The main focus of this article is the study of ergodicity of Interacting Particle Systems (IPS). We present a simple lemma showing that scaling time is equivalent to taking the convex combination of the transition matrix of the IPS with the identity. As a consequence, the ergodic properties of IPS are invariant under this transformation. Surprisingly, this simple observation has non-trivial implications: It allows to extend any result that does not respect this invariance, which we demonstrate with examples. Additionally, we develop a recursive method to deduce decay of correlations for IPS with alphabets of arbitrary (finite) size, and apply the Time-Scaling Lemma to that as well. As an application of this new criterion we show that certain one-dimensional IPS are ergodic answering an open question of Toom et al..}

\keywords{
Interacting Particle System, Stochastic Process, Ergodicity, Invariant Measure, Decay of Correlations, Positive Rates Conjecture, Two-Stage Contact Process.}

\pacs[MSC Classification]{60K35, 82C22}

\maketitle

\bmhead{Acknowledgements}

We thank Jacek Miekisz, Marek Biskup, Jacob Manaker, Roberto Schonmann, Jan Wehr, Tom Kennedy, Sunder Sethuraman, and Pablo Ferrari for their discussions and advice. Originated during a research visit at UCLA supported financially by initiative IV.2.3. by IDUB - University of Warsaw.

\bigskip \section{Introduction}

An Interacting Particle System $\bb{\text{IPS}}$ is a Markov process on the space of configurations $\mathcal{A}^\Lambda$ for some countable alphabet $\mathcal{A}$ and lattice $\Lambda$, where updates occur independently at each vertex (site), triggered by exponential clocks.
An IPS can be thought of as a continuous-time version of Probabilistic Cellular Automata $\bb{\text{PCA}}$, with the difference that in the PCA updates occur simultaneously. They share a lot of similarities with often analogous results present in both settings or derivable for the other process using similar arguments. A proper introduction to Interacting Particle Systems, including historical context, can be found in~\cite{Ligett:05}.\\

Our primary research focus is on exploring the sufficiency criteria for an IPS to be ergodic, i.e.~that the Markov process has a unique and attractive invariant measure. Whenever possible we want to obtain explicit rates of convergence to this measure. The existence of at least one invariant measure for all PCAs and IPS is a standard result that follows as a consequence of Schauder's fixed-point theorem or by construction, see e.g.~Proposition 1.8 in~\cite{Ligett:05}.\\

In Section~\ref{sec:one_d_ips}, we consider the simplest IPS for which the above problems are non-trivial. These are the one-dimensional IPS with one-sided, nearest-neighbor interactions and a homogenous rule. We give an informal characterization of the types of behavior such IPS can exhibit and discuss their ergodicity. This section serves a dual purpose as these IPS are central to the application of our results in Section~\ref{sec:application}. \\

The main insight of this work is the Time-Scaling Lemma of Section~$\ref{sec:time-scaling}$ (see Lemma~\ref{pro:time-scaling} below). The statement is simple but has far-reaching consequences. It states that for an IPS$(P)$ with transition matrix~$P$ slowing down time with a constant factor~$\lambda>0$ is equivalent to interpolating the transition matrix~$P$ with the identity~$I$. More precisly   after scaling time by~$\lambda$, the trajectories of the IPS$(P)$ has the same distribution as the IPS$(\lambda P + (1- \lambda ) I)$, where~$\lambda P + (1- \lambda ) I$ is the convex combination of $P$ with the identity~$I$. Of course, scaling time does not change the ergodic properties of an IPS. Therefore ergodic properties, like the set of invariant measures and whether the IPS is ergodic, are invariants of the family of IPS generated by the transition matrices $\lambda P + (1- \lambda ) I$ for~$\lambda>0$.
This allows to extend several theorems and criteria that establish ergodicity. The strategy is as follows: Consider a condition that forces an IPS to be ergodic. In order to establish the ergodicity of an IPS$(P)$ it then l
suffices to show that there exists $\lambda>0$ such that the IPS$(\lambda P + (1- \lambda ) I)$ satisfies this condition. In general, every result that is not invariant under this transformation can be extended. \\

This method is illustrated in Section~\ref{sec:extensions}, where we extend standard ergodic theorems formulated in Ligget's book~\cite{Ligett:05}. These include Griffeath's ergodic theorems about additive and cancellative IPS \cite{Griffeath:78} and a generalisation of the class of  \hyperref[def:monotonicity]{monotone}~IPS~\cite{Gray:82} to a larger class, which we call \hyperref[def:weak monotonicity]{weakly monotone} IPS.\\

In Section~\ref{sec:covdec}, we give a new criterion for exponential decay of correlations (see Theorem~\ref{pro:main-result} below) which implies ergodicity of the IPS. The criterion is inspired by a known theorem of Leontovitch and Vaserstein~\cite{VasLeo:70} developed in the discrete-time setting of PCAs (see Theorem~\ref{pro:leontovitch} below). The proof of Theorem~\ref{pro:main-result} is self-contained and uses a recursion. Compared to the original theorem for PCAs, we drop some unnecessary assumptions.  Additionally, our argument is general enough to treat IPS with alphabets of arbitrary size. We obtain a similar rate of \hyperref[def:covdec]{decay of correlations} but for a larger class of transition matrices due to the time-scaling trick. We use this generalized result to study the Two-Stage Contact Process (introduced in \cite{Krone:99}) and derive a new sufficiency criterion for the process to die out.\\

In Section~\ref{sec:application}, we apply the new criterion to an open problem: Determining whether all one-dimensional, homogeneous IPS with alphabet size two, one-sided nearest neighbor interactions, and positive rates are ergodic. Simulations carried out by~Głuchowski and~Miekisz suggest that this should be true. This is a particular case of the positive rates conjecture which was popularized by Ligget~\cite{Ligett:05}:\\

\begin{conjecture}
    Every IPS on a one-dimensional lattice with homogeneous, finite-range interaction and positive rates is ergodic.\\
\end{conjecture}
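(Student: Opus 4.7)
The plan is to combine the Time-Scaling Lemma (Lemma~\ref{pro:time-scaling}) with the generalized decay-of-correlations criterion of Theorem~\ref{pro:main-result}. Given any IPS$(P)$ on $\mathbb{Z}$ with homogeneous finite-range interaction of some range $R$, finite alphabet $\mathcal{A}$, and positive rates, the Time-Scaling Lemma says that the ergodic properties of IPS$(P)$ coincide with those of IPS$(\lambda P + (1-\lambda) I)$ for every $\lambda > 0$. So it suffices to exhibit a $\lambda>0$ for which the perturbed matrix satisfies the hypotheses of Theorem~\ref{pro:main-result}.

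First I would isolate the quantitative content of the positive-rates assumption: every admissible transition at a site has rate bounded below by some $\varepsilon>0$, uniformly in the local configuration. After time-scaling by small $\lambda$, the matrix $\lambda P + (1-\lambda) I$ is a convex combination placing weight close to $1$ on the identity. The key consequence is that, conditional on a site updating at all (an event of probability $\lambda$), the new symbol is drawn from a distribution that has full support on $\mathcal{A}$ with minimum mass bounded below uniformly in the neighborhood configuration. This is the analogue, in the time-scaled picture, of a Dobrushin-type uniform minorization at each site.

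Next I would feed this minorization into the recursion driving Theorem~\ref{pro:main-result}. That recursion propagates the influence of a perturbation at a distant site along the one-dimensional lattice, yielding a geometric rate provided a single contraction coefficient is strictly less than one. The uniform minorization guarantees a contraction factor of the form $1 - c(\varepsilon,\lambda)$ at each step, and one can try to optimize over $\lambda$ to make this factor beat the combinatorial branching coming from the interaction range $R$. Once exponential decay of correlations is established for some $\lambda$, the Time-Scaling Lemma transports ergodicity back to the original IPS$(P)$, finishing the proof.

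The main obstacle — and the reason the conjecture remains open in its full generality — is the step from one-sided nearest-neighbor interactions, where the recursion of Theorem~\ref{pro:main-result} operates naturally with a single-site boundary, to arbitrary finite-range two-sided interactions. In the two-sided setting the dependency cone grows on both flanks, and the gain in contraction provided by small $\lambda$ must outpace a combinatorial factor of roughly $(2R)^n$ at depth~$n$. The plan for the hard case would be to precede the recursion with a coarse-graining step: block $\mathbb{Z}$ into intervals of length comparable to $R$ and rewrite the dynamics as an IPS on the blocked lattice with effective nearest-neighbor (but still two-sided) interaction, then run a variant of Theorem~\ref{pro:main-result} adapted to two-sided recursions in which both left and right influences are controlled simultaneously by the time-scaled minorization. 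Making this simultaneous two-sided contraction quantitatively beat the combinatorial factor, without losing the positive-rates lower bound under coarse-graining, is where I expect the decisive difficulty to lie.
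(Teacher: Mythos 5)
There is a fundamental problem: the statement you are trying to prove is presented in the paper as an \emph{open conjecture}, not a theorem --- indeed, the paper recalls that G\'acs~\cite{Gács:86} constructed a counterexample to this very statement for sufficiently large alphabets and interaction ranges. The paper proves only a special case (one-sided nearest-neighbor interactions, alphabet of size two, with the additivity constraint $p_{1|10}=p_{1|01}$, via Corollary~\ref{compare}), and explicitly documents that even in that minimal setting a region of the positive-rates parameter space resists all known methods. Your coarse-graining plan for general range $R$ is particularly telling here: blocking $\Z$ into intervals of length comparable to $R$ converts a finite-range, small-alphabet IPS into a nearest-neighbor IPS over the alphabet $\mathcal{A}^{R}$, which is precisely the large-alphabet regime in which G\'acs's counterexample lives. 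A correct proof along your lines cannot exist for the statement as written.

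The concrete gap in the mechanism you propose is the step ``feed the minorization into the recursion.'' Positive rates (Definition~\ref{positive rates}: the transition matrix differs from the identity in every entry) give a lower bound $\varepsilon$ on off-diagonal transitions but impose \emph{no} upper bound on how strongly an update depends on the neighbors, and the hypothesis of Theorem~\ref{pro:main-result} is a quantitative contraction condition $\alpha<1$ on the update coefficients, not a minorization condition. Crucially, time-scaling cannot manufacture this contraction: the coefficient $C_{\cb{j},a}^{\bb{j,a}}$ already appears \emph{without} absolute value in~\eqref{eq_contractivity_gen_leontovitch} precisely because the time-scaling trick is built into the theorem (it is how the paper passes from the stronger condition $\beta<1$ in~\eqref{equ:main_theo_condition_contractivity_strong} to $\alpha<1$), so optimizing over $\lambda$ buys nothing further. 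The paper exhibits explicit positive-rates parameters, e.g.\ $(p_{1|11},p_{1|10},p_{1|01},p_{1|00})=(0,\,0.99,\,0.05,\,0.01)$, for which no product basis $x,y$ satisfies the criterion at all; these ``Walls IPS'' of Section~\ref{subsec:walls} are exactly the configurations where a single-update contraction argument fails because the long-time behavior is not captured by one update. Any viable attack on the remaining region is expected to exploit the one-dimensionality of $\Z$ in an essential way (as Gray's theorem does via path-crossing arguments), which your proposal does not do.
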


For very large neighborhoods, G\'acs published a  counterexample to the positive rates conjecture~\cite{Gács:86}. As this paper is very technical, the result is still discussed in the community. This prompted Gray to write a reader's guide~\cite{Gray:01} to G\'acs counterexample. We prove the affirmative for a symmetric subcase of this problem investigated by Toom et al.~in Section~$7$ of \cite{Toom:90}:\\

\begin{conjecture}
    Every additive IPS on a one-dimensional lattice with homogeneous, one-sided interaction, nearest neighbor interactions, and positive rates is ergodic.\\
\end{conjecture}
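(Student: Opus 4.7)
The plan is to combine the two main tools developed in this paper: the Time-Scaling Lemma~\ref{pro:time-scaling} and the decay-of-correlations criterion of Theorem~\ref{pro:main-result}. I would first use the fact that an additive, one-sided, nearest-neighbor IPS on $\{0,1\}^{\Z}$ with positive rates is determined by a small finite list of strictly positive rate parameters, so that the whole family can be parametrized explicitly and treated by a single uniform argument.

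Next I would apply the Time-Scaling Lemma to replace the given transition matrix $P$ by $P_\lambda := \lambda P + (1-\lambda) I$ for a small parameter $\lambda>0$ to be chosen. Since ergodicity of IPS($P$) is equivalent to ergodicity of IPS($P_\lambda$), and since $P_\lambda \to I$ as $\lambda \to 0$, any ``close-to-identity'' quantitative condition becomes automatically satisfiable. Crucially, the convex combination with the identity preserves additivity, one-sidedness, and the nearest-neighbor range of the interaction, so $P_\lambda$ stays inside the same structural class. I would then apply Theorem~\ref{pro:main-result} to $P_\lambda$ for $\lambda$ sufficiently small: the theorem yields exponential decay of correlations, from which ergodicity and an explicit convergence rate follow.

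The main obstacle will be the verification step. One must trace through the recursive estimate behind Theorem~\ref{pro:main-result} with explicit $\lambda$-dependence and check that its hypothesis genuinely holds for some $\lambda>0$. Here the \emph{one-sidedness} of the interaction should be essential: it makes the influence of a perturbation propagate in only one direction, so that the underlying recursion telescopes rather than branches, and the resulting contraction factor should improve roughly linearly with $\lambda$. The positive-rates assumption will be used to bound the relevant rate parameters of $P_\lambda$ from below uniformly, which is exactly what the criterion needs quantitatively. Additivity should enter either by simplifying the recursion via a Griffeath-style dual process or by reducing the problem to a small finite-dimensional parameter family amenable to direct analysis. Without the Time-Scaling Lemma the original Leontovitch-Vaserstein condition (Theorem~\ref{pro:leontovitch}) would be too restrictive to cover the full positive-rates regime; it is precisely the interpolation with the identity that turns an \emph{a priori} quantitative bound into a condition always achievable in this setting.
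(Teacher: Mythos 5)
There is a genuine gap at the heart of your verification step. You propose to pass to $P_\lambda=\lambda P+(1-\lambda)I$ and argue that, since $P_\lambda\to I$, the hypothesis of Theorem~\ref{pro:main-result} ``becomes automatically satisfiable'' for small $\lambda$. This is false: the contractivity condition~\eqref{eq_contractivity_gen_leontovitch} of Theorem~\ref{pro:main-result} is itself \emph{invariant} under the substitution $P\mapsto\lambda P+(1-\lambda)I$. Indeed, that substitution sends $C_{\cb{j},a}^{\bb{j,a}}\mapsto 1-\lambda+\lambda C_{\cb{j},a}^{\bb{j,a}}$ and scales all other coefficients by $\lambda$, so for small $\lambda$ the quantity $\alpha$ transforms as $\alpha\mapsto 1-\lambda(1-\alpha)$; it crosses the threshold $1$ for no value of $\lambda$ unless $\alpha<1$ already held. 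This is no accident: the $\alpha<1$ form of the criterion was \emph{obtained} from the cruder $\beta<1$ form precisely by optimizing over $\lambda$, so there is nothing left to gain by scaling again. Moreover the condition genuinely fails for some positive-rates IPS in this class (e.g.\ near the parameters $(0,\,0.99,\,0.05,\,0.01)$ no product basis satisfies the criterion), so no soft ``close to identity'' argument can work; the additivity hypothesis $p_{1|10}=p_{1|01}$ must be used in an essential, quantitative way, and your sketch never does so --- the appeal to a Griffeath-style dual is a red herring here, since Toom's ``additive'' ($P(1|10)=P(1|01)$) is a different notion from Griffeath's.

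The actual argument runs in the opposite direction and is an explicit computation. One uses the Time-Scaling Lemma not to push $P$ toward $I$ but to project each parameter point $(p_{1|11},p_{1|10},p_{1|01},p_{1|00})$ along the line through the identity $(1,1,0,0)$ onto a face of the cube opposite the identity, reducing (after the $0\leftrightarrow 1$ symmetry) to the cases $p_{1|11}=0$ or $p_{1|10}=0$; under this projection the additive set $\cb{p_{1|10}=p_{1|01}}$ maps into $\cb{p_{1|10}\le p_{1|01}}$. One then computes the update coefficients $C_\vn, C_{\cb{0}}, C_{\cb{1}}, C_{\cb{0,1}}$ explicitly for concrete product bases ($x=1$, $y=-1$ in the first case; $x=1$, $y=-p_{1|00}$ and $x=-p_{1|00}$, $y=1$ in the second) and checks that $\abs{C_\vn}+C_{\cb{0}}+\abs{C_{\cb{1}}}+\abs{C_{\cb{0,1}}}<1$ holds exactly on the region carved out by positive rates together with $p_{1|10}<p_{1|01}+p_{1|00}$, which the projected additive parameters satisfy. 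Your outline identifies the right two tools but, as written, the plan would stall at the point where you must verify the hypothesis: there is no limit in which it holds for free, and the one-sidedness plays no role in the recursion of Theorem~\ref{pro:main-result} beyond fixing the number of parameters.
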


 For precise definitions we refer to Section~\ref{sec:application}. After verifying the above conjecture, we investigated if one can drop the assumption of additivity. Combining several known criteria we visualized the regions of the underlying four-dimensional parameter space that are known to yield an ergodic IPS. The hope was that, by the time-scaling trick, extensions of those criteria would now cover the full parameter space. Unfortunately, a small region remains where none of the results known to us seem to apply. We conjecture that a method using the one-dimensionality of $\Z$ in a strong way is required to cover the remaining region. \\

\bigskip \section{Notation and basic definitions}\label{sec:notation}

We use a general framework that defines an IPS using two ingredients. The first ingredient is a family of clocks with which we construct a set of points in space-time, where transitions occur. There is a freedom of how to choose the distribution of clocks, which corresponds to the choice between discrete (PCA) and continuous (IPS) time settings. The second ingredient is the transition matrix governing the dynamics of the IPS. This construction is most similar to the graphical representation of IPS introduced by Harris~\cite{Harris:78}, with the added bonus that it unifies the study of PCAs and IPS. The construction we propose is sufficiently general to cover all possible IPS and PCAs, at the cost of nice additive properties present in the graphical representation.\\

We will start by giving a quick rundown of fundamental concepts relevant to this paper. A reader familiar with the subject may glance over this section. The only non-standard definitions are the \hyperref[def:causal]{cone of dependence and causality}. These concepts too are not new. We just formalize them and give them names.\\

The content of this section is split into three parts. The first deals with space-time, clocks, and the construction of the set of update points. In the second subsection, an IPS is formally constructed. The third subsection contains definitions of the properties of IPS we will study, such as \hyperref[def:ergodicity]{ergodicity} and \hyperref[def:covdec]{covariance decay}.\\

\bigskip
\subsection{Space-time structure}\label{subsec:space-time}\hfill
To set the stage we first define the space-time in which the IPS lives. We then mark a countable subset of that space-time as the set of points at which transitions occur.\\
\begin{nota}\hfill
\begin{itemize}
    \item Let $\Lambda$ be a lattice and for every $j\in\Lambda$ let $\mathcal{N}_j$ be the set of vertices $j$ is connected to. \\
    \item Let $\mathcal{C}$ be any Borel probability distribution on $\bb{0,\infty}$ with finite mean. We will call it a clock. In this article, this will either be a Dirac $\delta_1$ measure, a geometric $\text{Geo}(p)$, or an exponential $\text{Exp}(\lambda)$ distribution.\\
    \item Let $\bb{\rho_{n}^j}_{n\geq 0}^{j\in \Lambda}$ be a family of i.i.d.~random variables with distribution $\mathcal{C}$. \\
    \item Let $\tau_{n}^j= \rho_1^j +\dots +\rho_n^j$ for all $n\geq 0$. The $\tau_n^j$ random variable is to be the stopping time of the $n$-th update at a given site $j$. We set $\tau_0^j = 0.$\\
    \item We will denote the set of all points in space-time at which an update occurs as
    $$\mathbf{Update} = \cb{(\tau_n^j,j)\in \R_+\times \Lambda\;:\;n\geq 0}.$$\\
    \end{itemize}    
\end{nota}

For a given point in space-time, we will construct the maximal set of points in its past with which it has a causal link.\\
 
\begin{defi}[Cone of dependence]\label{def:causal}
    For a lattice-clock pair $\bb{\Lambda, C}$ and a point in space-time \mbox{$(t,j)\in\R_+\times \Lambda$} we define the $\emph{cone of dependence}$ with root at $(t,j)$, $\mathbf{Cone}(t,j)\subseteq \R_+\times \Lambda$ as follows:\\
    
        \begin{itemize}
            \item Let $t_0 :=t$ and $K_0 := \cb{j}$.\\
            \item Having defined $K_0\subseteq\dots\subseteq K_n$ and $t_0\geq\dots\geq t_n$ let
            $$t_{n+1}:=\max\cb{0\leq s < t_n\,:\,\cb{s}\times K_n \cap \mathbf{Update}\neq \vn},$$
            $$J_{n+1} \text{ be the projection of }\cb{t_{n+1}}\times K_n \cap \mathbf{Update}\text{ onto vertices of } \Lambda,$$
            $$K_{n+1} := K_n \cup \bigcup_{j\in J_{n+1}} \mathcal{N}_j.$$
            \item Now set: 
            $$\mathbf{Cone}(t,j):=\bigcup_{n\geq 0}\Cb{t_n,t_n+1}\times K_n .$$
        \end{itemize}
    \end{defi}

    \begin{figure}[H]
        \centering
         \includegraphics[scale=0.16]{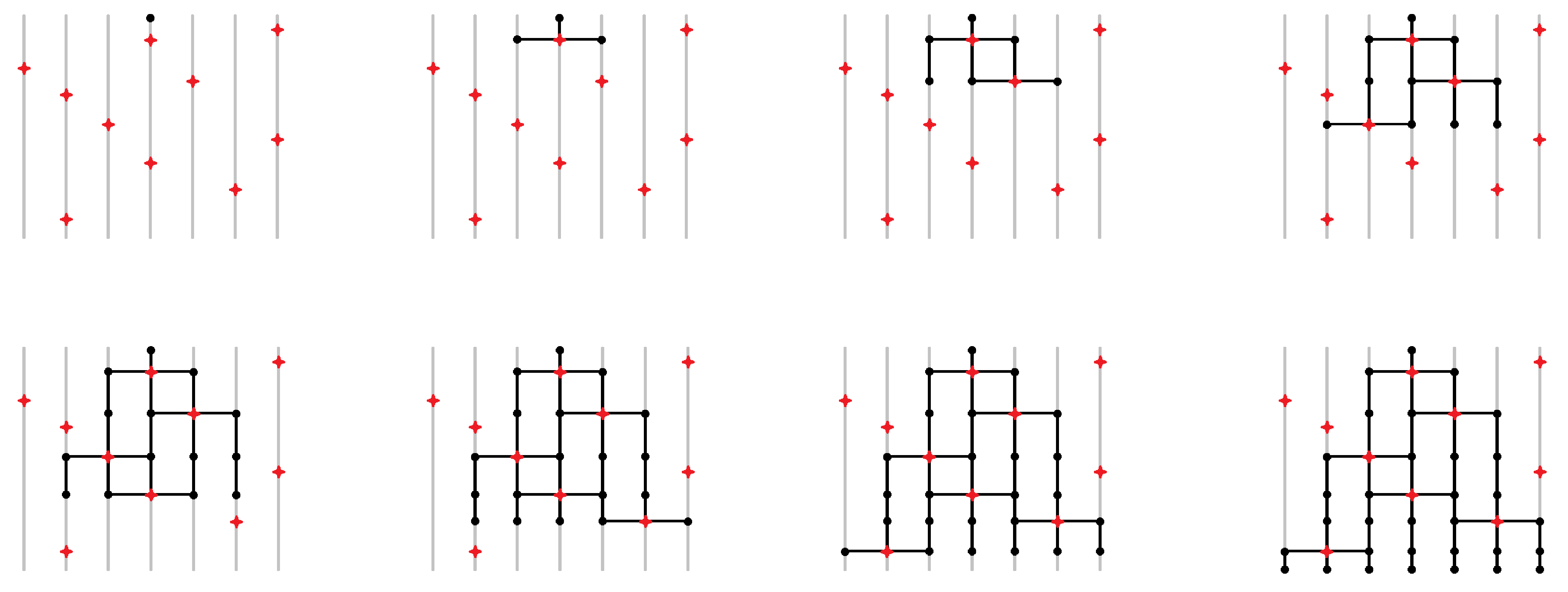}
        \caption{Iterative construction of the cone of dependence with $\Lambda = \Z$ and \mbox{$\mathcal{N}_j = j+\cb{-1,0,1}$}. The arrow of time points up. The red markers indicate the $\mathbf{Update}$ set and the black ones represent the consecutive sets $K_n$. The $\mathbf{Cone}$ consists of vertical black lines in the last panel.}
    \end{figure}

    The interpretation of these objects is as follows. Whenever a clock rings at site $j$ its state is updated according to some rule that takes into account the states of sites in $\mathcal{N}_j$. Thus to determine the state at points $\cb{t}\times K$ we must traverse the space-time backward in time, marking the relevant update times and expanding the set of points that have a \hyperref[def:causal]{causal} link with $(t,j)$. Here $(t_n)_{n\geq 0}$ are these update times and $(J_n)_{n\geq 0}$ are the sets of sites in the cone that are updated. We naturally extend this construction to finite $K\subseteq\Lambda$ by setting
    $$\mathbf{Cone}\bb{t,K} := \bigcup_{j\in K} \mathbf{Cone}\bb{t,j}.$$
    For our construction of IPS to make sense there a.s.~must be finitely many updates in the cone of dependence of any point. Otherwise it is not possible to represent the state of an IPS at a given point as a function of the initial configuration. This condition, which we term \textit{causality}, corresponds to the requirement that the recursive construction of all $\mathbf{Cone}\bb{t,j}$ almost surely halts after finitely many steps, i.e.~$t_n$ hits zero. This condition will force the trajectory of the IPS to have a uniquely defined distribution after specifying the initial configuration.\\
    
\begin{defi}[Causality]
        A lattice-clock pair $\bb{\Lambda, \mathcal{C}}$ is \emph{causal} if and only if
        $$\pP\bb{\mathbf{Cone}(t,j)\cap \cb{0}\times\Lambda\neq \vn}=1,$$
         for all $(t,j)\in \R_+\times \Lambda$.\\
    \end{defi}

\begin{ex} \mbox{}\\

\begin{itemize}
    \item For any finite lattice $\Lambda$ and any clock $\mathcal{C}$ the lattice-clock pair $\bb{\Lambda,\mathcal{C}}$ is causal.\\

       \item For any lattice $\Lambda$ the lattice-clock pair $\bb{\Lambda, \delta_{1}}$ is causal.\\

       \item If there exists finite $\mathcal{N}\subset \Z^d$ such that $\mathcal{N}_j \subseteq j+\mathcal{N}$ for all $j$ then the lattice-clock pair $\bb{\Z^d,\text{Exp}(1)}$ is causal.\\

        \item If the clock is such that $\mathcal{C}(\Cb{0,t})>0$ for all $t>0$ and the lattice $\Lambda$ is a tree of minimal order two then $\bb{\Lambda,\mathcal{C}}$ is not causal.\\

        \item If the clock is such that $\mathcal{C}(\Cb{0,t})>0$ for all $t>0$ and there exists $j\in\Lambda$ such that $\abs{\mathcal{N}_j}=\infty$ then $\bb{\Lambda,\mathcal{C}}$ is not causal.
        As a consequence, the exponential clock requires a lattice with finite neighborhoods.\\

    \item It is possible for $\bb{\Lambda,\text{Exp}(1)}$ to be causal even if $\sup_j \abs{\mathcal{N}_j}=\infty$.
    Take $\Lambda = \Z$ and $\mathcal{N}_j = \cb{0,\dots,j}$. Here every cone of dependence is spatially bounded i.e.~for any $t\geq 0$ we have $$\mathbf{Cone}(t,j) \subseteq [0,t] \times \cb{0,\dots,j}.$$ 
    This, of course, implies causality since locally the lattice is indistinguishable from a finite one.\\

    \item If $\bb{\Lambda,\mathcal{C}}$ is causal and $\hat{\Lambda}$ is a subgraph of $\Lambda$ then $\bb{\hat{\Lambda},\mathcal{C}}$ is causal.\\
    \end{itemize}
\end{ex}

\bigskip
\subsection{Construction of an interacting particle system}\hfill

    This subsection deals with the construction of an IPS over a lattice-clock pair $\bb{\Lambda,\mathcal{C}}$ with a given transition matrix. The construction is simple enough - the transition matrix is applied at every point in the $\mathbf{Update}$ set. For the conventional construction of an IPS using flip rates see Ligget's textbook \cite{Ligett:05} (Chapter I.$3$, page $20$). \\
    
\begin{defi}[Transition matrix]\label{tmatrix}\hfill
        Let $\mathcal{A}$ be a countable set which is called an alphabet. For every $j\in\Lambda$ and $\sigma \in \mathcal{A}^{\mathcal{N}_j}$ let $P_j\bb{\cdot\,|\,\sigma}$ be a probability distribution on $\mathcal{A}$.
        The collection $P = \bb{\bb{P_j\bb{\cdot\,|\,\sigma}}_{\sigma\in\mathcal{A}^{\mathcal{N}(j)}}}_{j\in\Lambda}$ is called transition matrix.\\
\end{defi} 

In this article, we only consider time-homogeneous IPS.  \\

\begin{defi}
The IPS is (time-)\emph{homogeneous} if the transition matrix at a specific site only depends on its neighborhood but not the location of the site~$i$. \\
\end{defi}

\begin{defi}[Trajectory of IPS]\label{def:general construction}
    For a given initial distribution $\mu_0$ on $\mathcal{A}^{\Lambda}$, the trajectory $\bb{\zeta_t}_{t\geq 0}$ of the IPS is constructed recursively:\\
    
    \begin{itemize}
        \item The initial configuration $\zeta_0$ is distributed according to the initial distribution $\mu_0$.\\
        \item The state at site $j$ can only be updated when its respective clock rings i.e.~\newline if $t\in \left[\tau_n^j, \tau_{n+1}^j\right)$ then $\zeta_t(j) = \zeta_{\tau_n^j}(j)$.\\
        \item The random variable $\zeta_{\tau_{n}^j}(j)$ is sampled from the  $P_j(\,\cdot\;|\;\zeta_{\tau_{n}^{j-}} )$ distribution independently of everything else. We use the convention that for a given stopping time $\tau$ the $\zeta_{\tau^-}$ denotes the left-side limit of $\bb{\zeta_t}_{t<\tau}$ random variables.\\
    \end{itemize} 
    Since $\bb{\Lambda, C}$ is \hyperref[def:causal]{causal}, the recursion above will define $\zeta_{\tau_n^j}(j)$ for any $(j,n)$ in finitely many steps. Additionally, since the range of $C$ lies in $(0,\infty)$ then for every $t\geq 0$ and $j\in \Lambda$ there exists $n$ such that $t\in \left[\tau_n^j, \tau_{n+1}^j\right)$. Thus $\zeta_t$ is well defined.\\
\end{defi}

\begin{defi} [Interacting particle system] \label{def:IPS}
    An \emph{interacting particle system} with a \hyperref[def:causal]{causal} lattice-clock pair $\bb{\Lambda, \mathcal{C}}$ and transition matrix $P$ is the set of distributions of $\bb{\zeta_t}_{t\geq 0}$ indexed by all initial distributions. We denote it by $\text{IPS}\bb{P, \mathcal{C}}$.\\ 
\end{defi}

\begin{nota} For convenience, we introduce the following notation:\smallskip

    \begin{itemize}
        \item If the distribution of a \hyperref[def:general construction]{trajectory}  $\bb{\zeta_t}_{t\geq 0}$ is a member of $\\\text{IPS}\bb{P,  \mathcal{C}}$ then we write $\bb{\zeta_t}_{t\geq 0} \in \text{IPS}\bb{P,  \mathcal{C}}.$\\
        \item We will denote $\mu P_t$ as the distribution of $\zeta_t$ given $\zeta_0$ is distributed according to $\mu$.\\
        \item $ \text{IPS}\bb{P}: = \text{IPS}\bb{P,\text{Exp}(1)}$\\
        \item $ \text{PCA}\bb{P} := \text{IPS}\bb{P,\delta_1}$.\\
        
     \end{itemize}
\end{nota}

The PCA stands for \emph{probabilistic cellular automata} - the discrete-time version of interacting particle systems. The clock being $\delta_1$ ensures that the updates happen at integer-valued times and are simultaneous for all sites.\\

\bigskip
\subsection{Ergodicity and covariance decay}
In this section we introduce the principal objects of our study of Interacting Particle Systems, namely invariant measures, ergodicity, and covariance decay. The motivation for this interest is discussed at length in the introduction to Ligget's textbook \cite{Ligett:05}.\\

\begin{defi}[Invariant measure]\label{def:invariant measure}
    An \emph{invariant measure} of an $\text{IPS}\bb{P,\mathcal{C}}$ is any distribution $\mu$ on $\mathcal{A}^\Lambda$ such that $ \mu P_t = \mu$ for all~$t \geq 0$.\\
\end{defi}

 An IPS with a memoryless clock always has at least one invariant measure. The $\bb{P_t}_{t\geq 0}$ (or $t\in\N$ when time is discrete) operators form a Markov semigroup on continuous functions on $\mathcal{A}^\Lambda$ equipped with the product topology of discrete $\mathcal{A}$. For geometric clocks (including $\delta_1$), the existence of an invariant measure follows immediately from Shauder's fixed-point theorem applied to the $P_1$ operator. The proof for exponential clocks may be found in \cite{Ligett:05}(Proposition $1.8$ on page 10).\\

\begin{defi}[Attractive invariant measure]
    An invariant measure $\mu$ of an $\text{IPS}\bb{P,\mathcal{C}}$ is \emph{attractive} if for any other distribution $\nu$ we have convergence in distribution $\nu P_t \Rightarrow \mu$ as $t \to \infty.$\\
\end{defi}
\begin{defi}[Ergodicity]\label{def:ergodicity}
    An $\text{IPS}(P,\mathcal{C})$ is said to be \emph{ergodic} if it has an attractive measure $\mu$.
    It is \emph{exponentially \hyperref[def:ergodicity]{ergodic} with rate} $\alpha$ if for every finite $S\subseteq \Lambda$ there exists a constant $C_S\geq 0$ such that for any $A\subset \mathcal{A}^\Lambda$ - cylinder set with support $S$ it holds that
    $$\sup_\nu \abs{\nu P_t\bb{A} - \mu(A)}\leq C_S\cdot e^{-\alpha t}.$$
\end{defi}

\begin{defi}[Local functions]
    Let us consider a lattice $\Lambda$ and alphabet $\mathcal{A}$. The domain~$\text{Dom}(f)$ of a function $f:\mathcal{A}^\Lambda\rightarrow\R$ is defined as the minimal set of coordinates on which the value of~$f$ depends. The function is called local if its domain is finite. The set of local functions is denoted as 
    $$\mathbf{F}(\mathcal{A},\Lambda) = \cb{f:\mathcal{A}^\Lambda\rightarrow\R\;:\;\abs{\text{Dom}(f)}<\infty}.$$
    We note that such functions are necessarily bounded and that they form a linear space. We use the short notation $\mathbf{F}$ when the alphabet and lattice are known from the context.\\
\end{defi}

\begin{defi}[Covariance decay]\label{def:covdec}
    An $\text{IPS}(P,\mathcal{C})$ has \emph{(temporal) covariance decay} if for all $f,g\in \mathbf{F}$ and initial distributions
    $$\cov{f\bb{\zeta_0}}{g\bb{\zeta_t}}\xrightarrow{t\rightarrow\infty}0.$$ 
 The decay is uniform if additionally for all $f,g\in \mathbf{F}$
$$\sup_{\bb{\zeta_t}_{t\geq 0}\in \text{IPS}(P,\mathcal{C})}\abs{\cov{f\bb{\zeta_0}}{g\bb{\zeta_t}}}\xrightarrow{t\rightarrow\infty}0.$$
The decay is exponential if there exist $\alpha >0$ and constants $C(f,g)$ such that for all initial distributions 
$$\abs{\cov{f\bb{\zeta_0}}{g\bb{\zeta_t}}}\leq C(f,g)\cdot e^{-\alpha t}.$$
\end{defi}
\mbox{}\\
We conclude this section with the following observation.\\

\begin{lem}\label{pro:ergodicity_equiv_covdec} If the clock is memoryless then \hyperref[def:covdec]{covariance decay} and \hyperref[def:ergodicity]{ergodicity} are equivalent. The same is true for exponential covariance decay and exponential ergodicity.
\end{lem}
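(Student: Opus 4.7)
The memoryless assumption makes $\bb{\zeta_t}_{t\geq 0}$ a Markov process, so $P_t$ acts as a semigroup on bounded local functions and, writing $\nu(h) := \int h \, d\nu$,
\[
\cov{f(\zeta_0)}{g(\zeta_t)} = \nu\bb{f\cdot P_t g} - \nu(f)\cdot \bb{\nu P_t}(g)
\]
whenever $\zeta_0$ has law $\nu$. I will prove the two implications separately using this rewriting, and keep track of rates so that the exponential versions follow from the same arguments.

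For \emph{ergodicity $\Rightarrow$ covariance decay}, let $\mu$ be the attractive invariant measure. For every configuration $\eta$, weak convergence $\delta_\eta P_t \Rightarrow \mu$ applied to the bounded, continuous local function $g$ gives $(P_tg)(\eta)\to \mu(g)$ pointwise, while $\norm{P_tg}_\infty \leq \norm{g}_\infty$ gives a uniform bound. Dominated convergence then yields $\nu(f\cdot P_tg)\to \nu(f)\mu(g)$ and, similarly, $(\nu P_t)(g)\to \mu(g)$, so the covariance tends to zero. In the exponential case, applying the hypothesis $\sup_\nu \abs{\nu P_t(A) - \mu(A)} \leq C_S e^{-\alpha t}$ to all cylinders supported on the finite set $\dom{g}$ upgrades this to $\norm{P_tg - \mu(g)}_\infty \leq C(g)\,e^{-\alpha t}$, and the covariance inherits the same rate.

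For \emph{covariance decay $\Rightarrow$ ergodicity}, I use a coin-flip mixing trick twice. First, for uniqueness of the invariant measure: if $\mu_1,\mu_2$ are invariant and $\mu := \tfrac12(\mu_1+\mu_2)$ (also invariant), I realise the stationary process from $\mu$ by first sampling a coin $\xi\in\cb{1,2}$ uniformly and then starting at $\mu_\xi$. Since the clock variables are independent of $\xi$, this yields the splitting $\E_\mu[f(\zeta_0)g(\zeta_t)] = \tfrac12(\E_{\mu_1}+\E_{\mu_2})[f(\zeta_0)g(\zeta_t)]$. Taking $t\to\infty$ in covariance decay for each of the three stationary processes (and using $\mu_i P_t = \mu_i$) produces the identity
\[
\tfrac12\bb{\mu_1(f)\mu_1(g) + \mu_2(f)\mu_2(g)} = \tfrac14\bb{\mu_1(f)+\mu_2(f)}\bb{\mu_1(g)+\mu_2(g)};
\]
specialising to $f=g$ forces $\mu_1(f)=\mu_2(f)$ for every local $f$, hence $\mu_1=\mu_2$ by density of local functions. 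Second, for attractiveness of this unique $\mu$: for arbitrary $\nu$ the same coin-flip trick with $\nu' := \tfrac12(\nu+\mu)$ reduces the covariance-decay statements for $\nu,\mu,\nu'$ to the single relation $\bb{\nu(f)-\mu(f)}\bb{(\nu P_t)(g) - \mu(g)} \to 0$. If $\nu=\mu$ the claim is trivial; otherwise I pick a local $f$ that separates $\nu$ from $\mu$ and conclude $(\nu P_t)(g)\to \mu(g)$ for every local $g$, i.e.\ $\nu P_t \Rightarrow \mu$. All manipulations are polynomial in the limiting objects, so exponential rates propagate verbatim.

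The hard step is the reverse implication: covariance decay is an a priori weak hypothesis that only constrains how joint moments factorise and gives no direct control on the marginal $\nu P_t$. The coin-flip trick is what breaks the impasse, because it produces an algebraic identity in which the potentially obstructing factor $\nu(f)-\mu(f)$ appears explicitly and can be cancelled by a suitable choice of $f$.
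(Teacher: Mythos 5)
Your proof is correct and follows essentially the same route as the paper's: the forward direction by conditioning on $\zeta_0$ (law of total covariance / semigroup rewriting), and the reverse direction via the half--half mixture trick, whose total-covariance decomposition isolates the cross term $\bb{\nu(f)-\mu(f)}\bb{\nu P_t(g)-\mu P_t(g)}$. The only differences are organizational — you argue directly (uniqueness, then attractiveness) where the paper argues by contradiction, and both proofs rely in the same way on the standard existence of an invariant measure for memoryless clocks.
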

\begin{proof}[Proof of Lemma~\ref{pro:ergodicity_equiv_covdec}]
    Since indicators of finite cylinder sets form a basis of $\mathbf{F}\bb{\mathcal{A},\Lambda}$ then ergodicity of an IPS is equivalent to the condition that for any local function $f\in\mathbf{F}$ the conditional expectation $\CE{f\bb{\zeta_t}}{\zeta_0}$ converges to a constant~$c$, i.e.
    $$\sup_{\xi} \abs{\CE{f\bb{\zeta_t}}{\zeta_0 = \xi} - c  }\xrightarrow{t\rightarrow\infty} 0.$$ Thus ergodicity implies temporal covariance decay, since by the law of total covariance 
    $$\cov{g(\zeta_0)}{f(\zeta_t)} = \cov{g(\zeta_0)}{\CE{f\bb{\zeta_t}}{\zeta_0}}\xrightarrow{t\rightarrow\infty} 0,$$
    for any initial distribution and local functions $f,g$. The rate of decay is clearly preserved.\\

    To show that temporal covariance decay implies ergodicity suppose that an IPS has covariance decay but there are measures $\mu$ and $\nu$ such that $\mu P_t$ and $\nu P_t$ do not asymptotically converge. Then there must exist a local functions $f,g $ such that $\E_{\nu}(g) \neq \E_{\mu}(g)$ and $\E_{\nu P_t}(f) $ does not asymptotically converge with $ \E_{\mu P_t}(f)$. Consider the initial measure $\frac{1}{2}\mu + \frac{1}{2}\nu$. By temporal covariance decay, we have
    $$\Cov_{\frac{1}{2}\mu + \frac{1}{2}\nu}\bb{g(\zeta_0),\;f(\zeta_t)}\xrightarrow{t\rightarrow\infty}0.$$
    But again by temporal covariance decay, this covariance is by the law of total covariances equal to
    \begin{align*}
        \Cov_{\frac{1}{2}\mu + \frac{1}{2}\nu}\bb{g(\zeta_0),\;f(\zeta_t)} & =  \frac{1}{4}\bb{\E_{\mu}(g)-\E_{\nu}(g)}\bb{\E_{\mu P_t}(f) - \E_{\nu P_t}(f)} \\
        & \qquad + \frac{1}{2}\Cov_{\mu}\bb{g(\zeta_0),\;f(\zeta_t)} + \frac{1}{2} \Cov
_{\nu}\bb{g(\zeta_0),\;f(\zeta_t)},
    \end{align*}
    which does not converge to zero. By contradiction such $\mu$ and $\nu$ cannot exist, so all measures converge asymptotically under $P_t$. In this case, the existence of an invariant measure implies ergodicity, and for IPS with memoryless clocks, an invariant measure always exists. The exponential decay rate is again preserved since we can bound 
    $$\abs{\E_{\mu P_t}(f) - \E_{\nu P_t}(f)}\leq \frac{8\cdot C_{f,g}}{\abs{\E_{\mu}(g)-\E_{\nu}(g)}}e^{-\alpha t}. \hfill \qedhere$$
\end{proof}

\section{IPS on a one-dimensional lattice with one-sided, nearest-neighbor interaction}\label{sec:one_d_ips}

In this section, we introduce a very simple class of IPS and discuss their distinct qualitative ergodic behavior for illustration.  This class of IPS also plays a central role later in  Section~\ref{sec:application}. The underlying lattice is $\bb{\Lambda,\mathcal{N}} = \bb{\Z,\cb{0,1}}$. The alphabet is $\mathcal{A}=\cb{0,1}$, which means that every site can either be unoccupied or occupied. The transition matrix $P$ is time and spatially homogeneous and the clocks are independent exponentially distributed with rate 1. In this simple situation, the IPS is determined by the following four parameters:
$$p_{1|11} := P(1\;|\;11),\quad p_{1|10}:= P(1\;|\;10),\quad p_{1|01}:= P(1\;|\;01),\quad p_{1|00}:= P(1\;|\;00).$$

What makes this class of IPS interesting is the fact that it is the most simple class for which ergodicity is not fully understood. For more details we refer to Section~\ref{sec:application} below. Since the construction of IPS via the transition matrix differs from the usual construction via flip rates, let us explain how they are related to each other. An IPS with an alphabet of size two and flip rates that have been normalized to satisfy 
$$\sup_{j\in\Lambda,\, \zeta\in\cb{0,1}^\Lambda}c_j\bb{\zeta} = 1$$
is equivalently given by the transition matrix

$$P_j\bb{1\,|\,\zeta} = \left\{ \begin{array}{cc}
     1- c_j\bb{\zeta},&  \zeta(j) = 1 \\
     c_j\bb{\zeta},& \zeta(j) = 0 
\end{array} \right..$$\\

Because of their simplicity, the trajectories of those IPS can only evolve in a few ways. In our study, we have identified five distinct types of one-dimensional, one-sided nearest neighbor IPS. In every subsequent figure depicting the evolution of a trajectory of an IPS the arrow of time points up. The color white represents the "$0$" state abn the color black the "$1$" state.

\subsection{Symmetric IPS}
These are IPS that treat both states similarly. This means that the transition matrix is roughly invariant under renaming zeroes and ones. Equivalently, the parameters of such IPS satisfy
$$p_{1|11}+ p_{1|00}\approx 1\approx p_{1|10}+ p_{1|01}.$$

Examples include the Stochastic Ising model with low external field, the "do nothing", "copy your neighbor", and "flip your neighbor" rules as well as convex combinations of these rules. In low noise regimes, the trajectories produce consist of wide stripes of each state with boundaries between them performing independent random walks. Correlations between states of neighboring sites are high but quickly dissipate with distance.\\

\begin{figure}[H]
        \centering
       \includegraphics[scale=0.175]{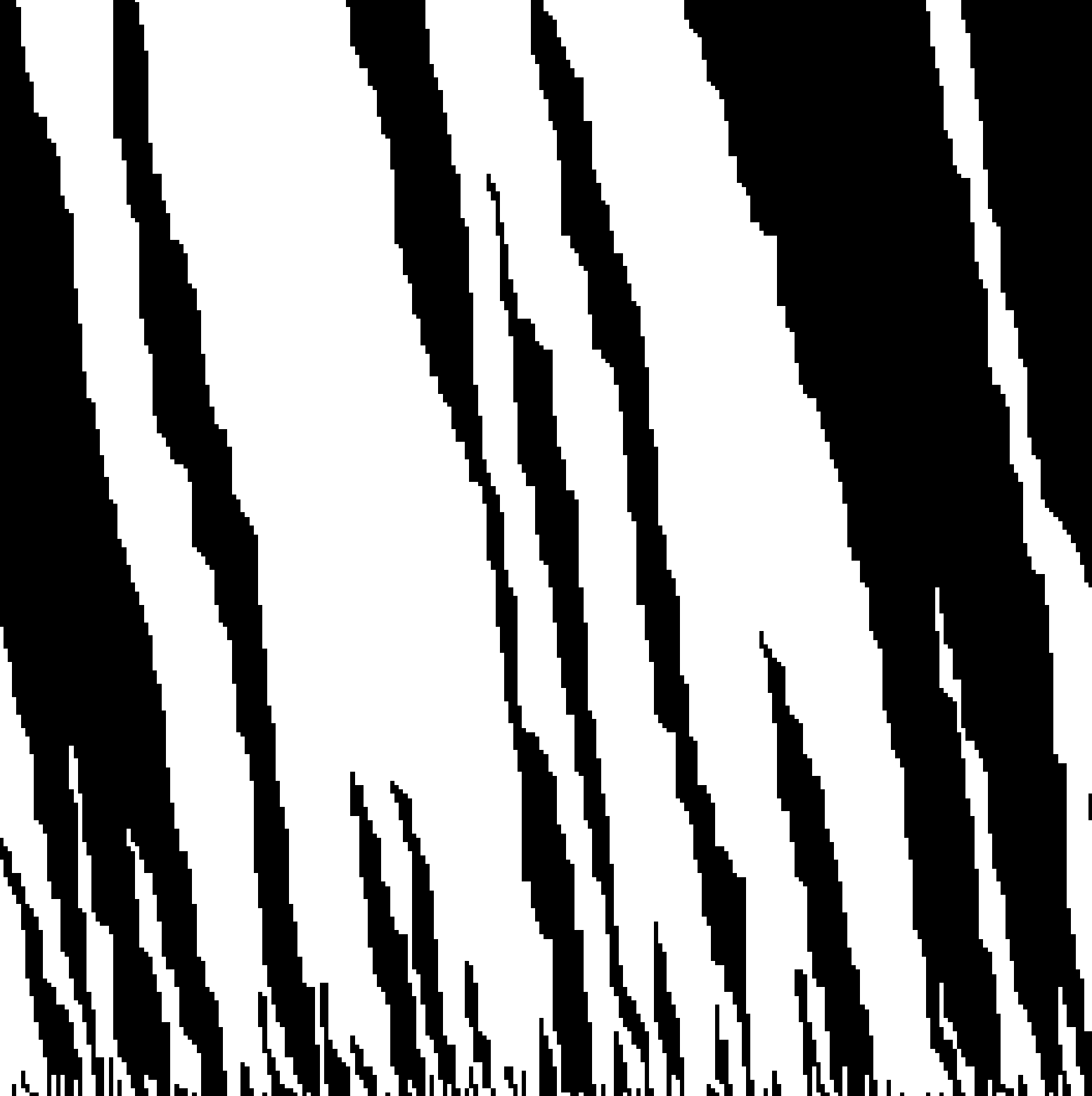}
       \hfill
       \includegraphics[scale=0.175]{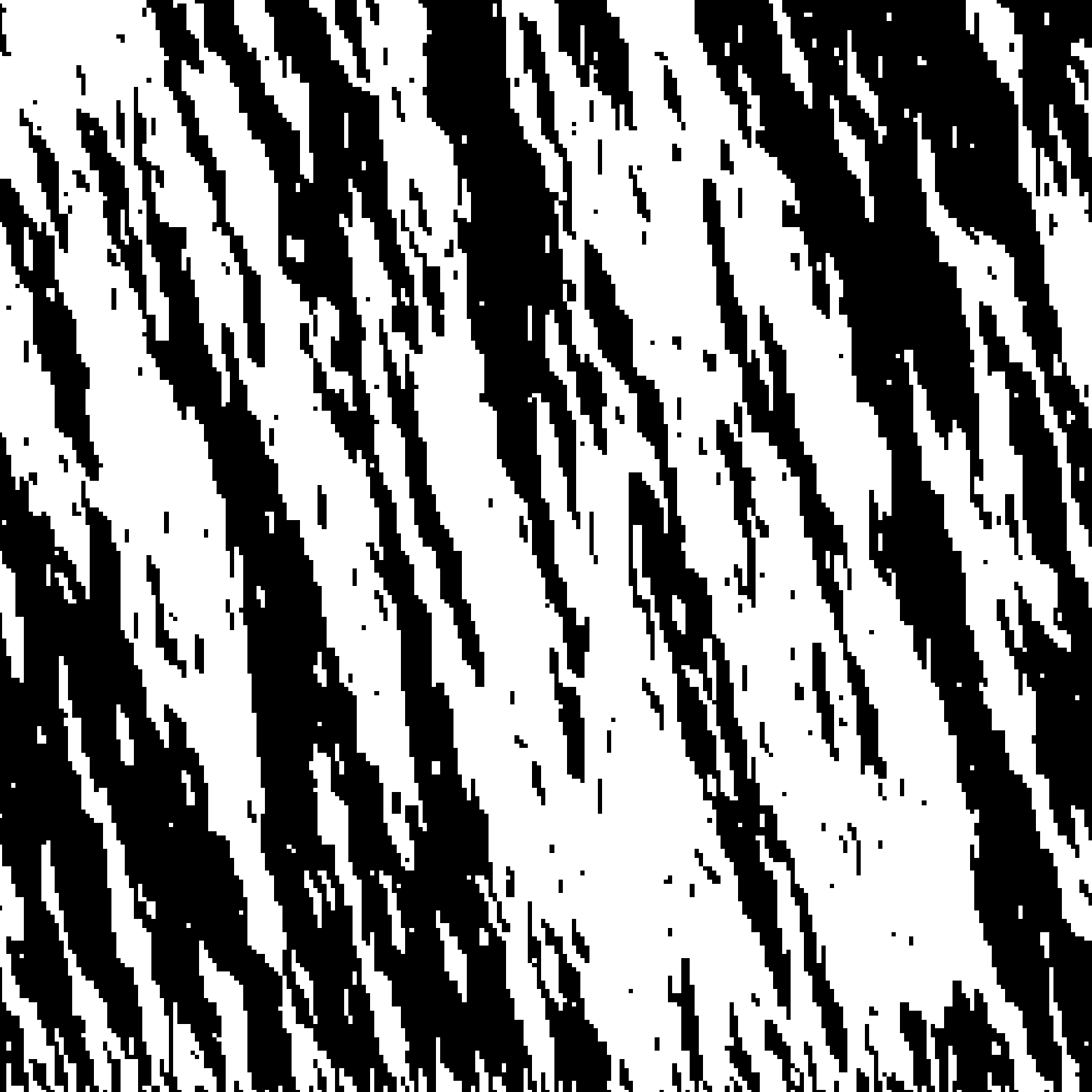}
        \caption{The Stochastic Ising model, where sites try to align their states. The semi-deterministic IPS on the left has parameters \mbox{$(p_{1|11},\,p_{1|10},\,p_{1|01},\,p_{1|00})=(1,\,0.8,\,0.2,\,0)$}, it's noisy version on the right \mbox{$(0.9,\,0.8,\,0.2,\,0.1)$}. In both simulations the initial state is random.}
    \end{figure}

    \begin{figure}[H]
        \centering
       \includegraphics[scale=0.13]{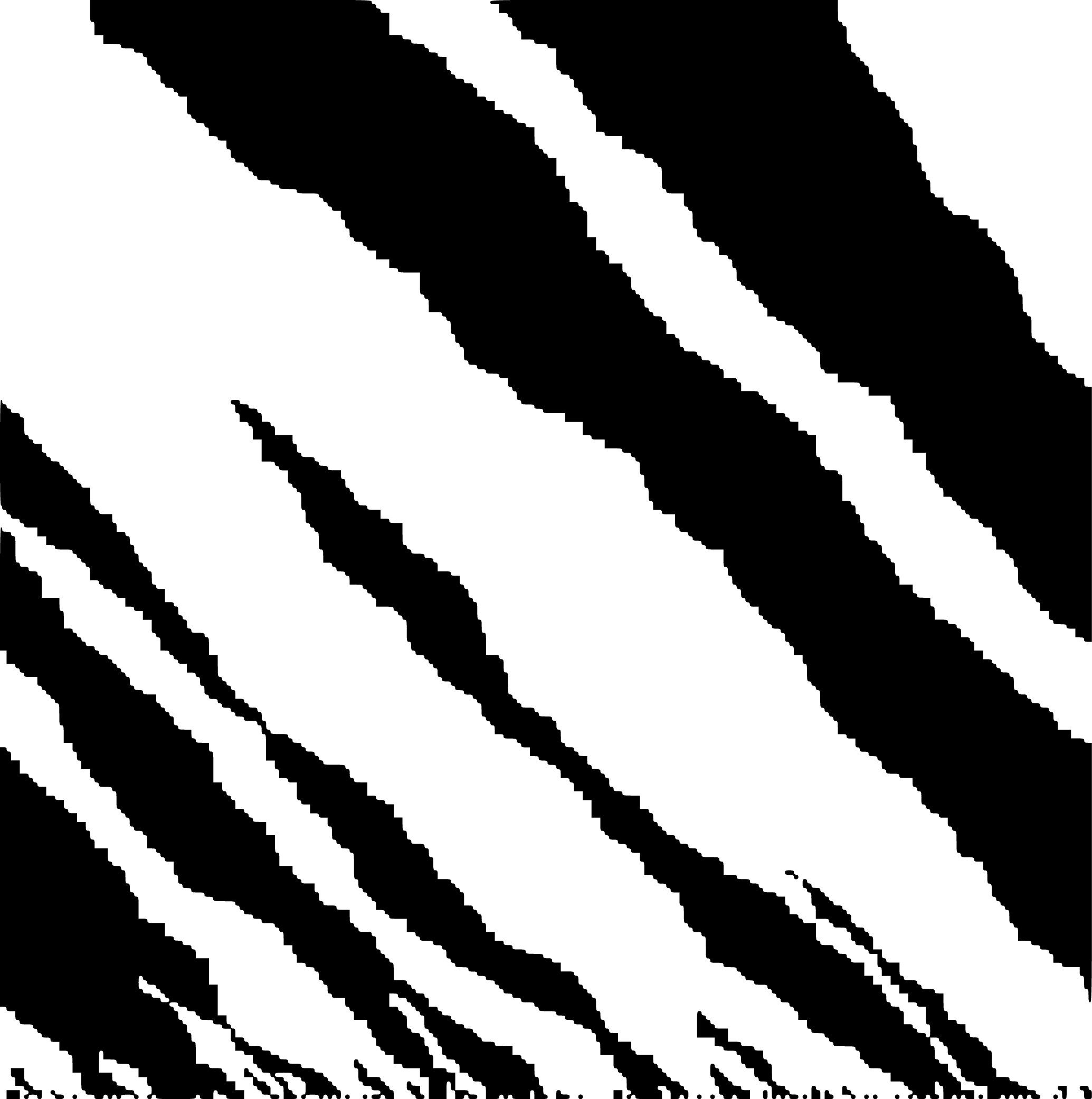}
       \hfill
       \includegraphics[scale=0.175]{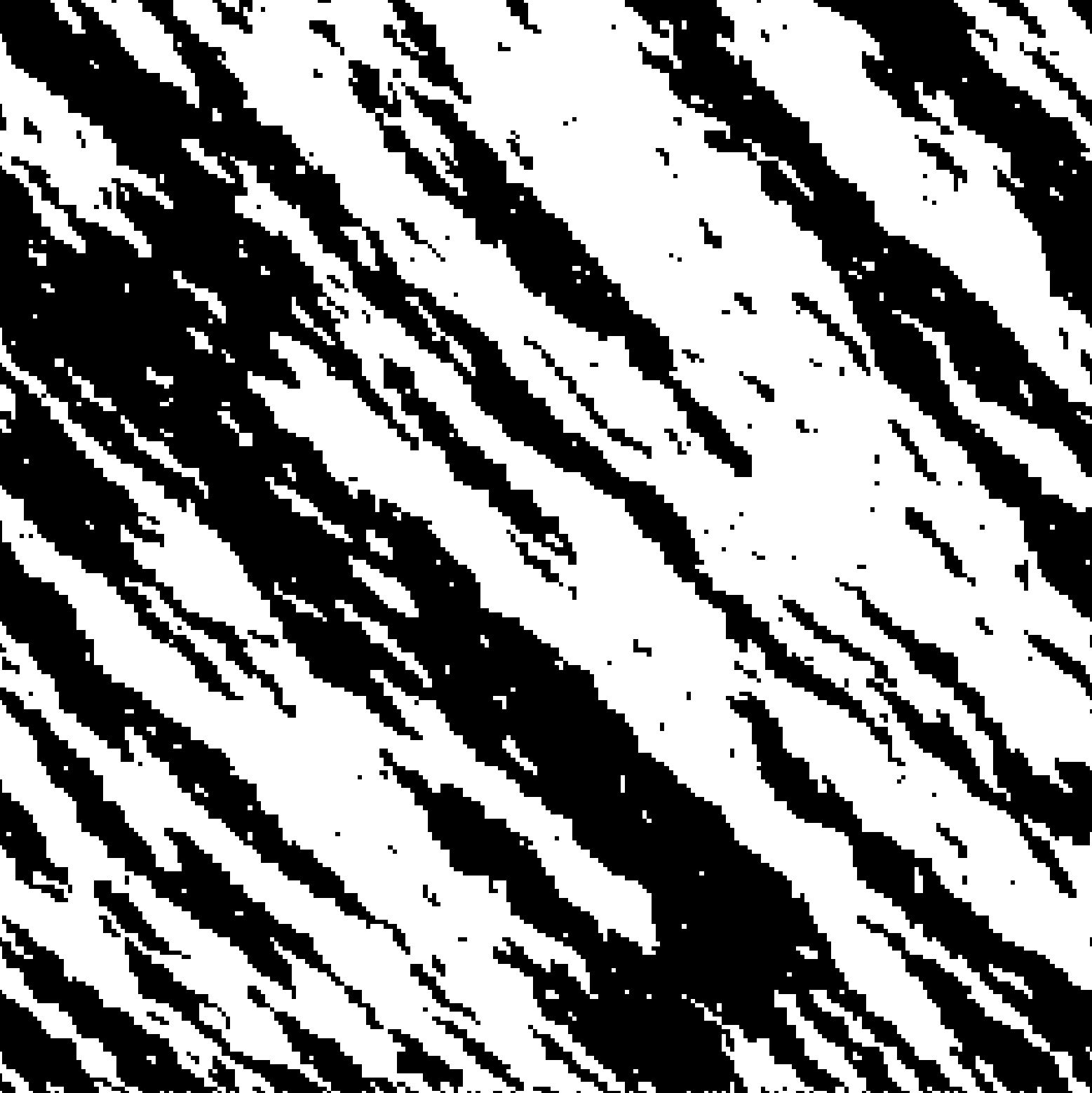}
        \caption{The "copy your neighbor" IPS, which acts as the Stochastic Ising model with added drift. The semi-deterministic version on the left has parameters \mbox{$(p_{1|11},\,p_{1|10},\,p_{1|01},\,p_{1|00})=(1,\,0.2,\,0.8,\,0)$}, it's noisy version on the right  \mbox{$(0.9,\,0.2,\,0.8,\,0.1)$}. In both simulations the initial state is random.}
    \end{figure}

The outlier here is the "flip your neighbor" rule, which seems to behave differently, producing a checkerboard pattern. This however is only an illusory distinction: If we rename zeroes and ones at every other site (formally yielding an IPS with a periodic rule) we get back the familiar stripes.

    \begin{figure}[H]
        \centering
       \includegraphics[scale=0.13]{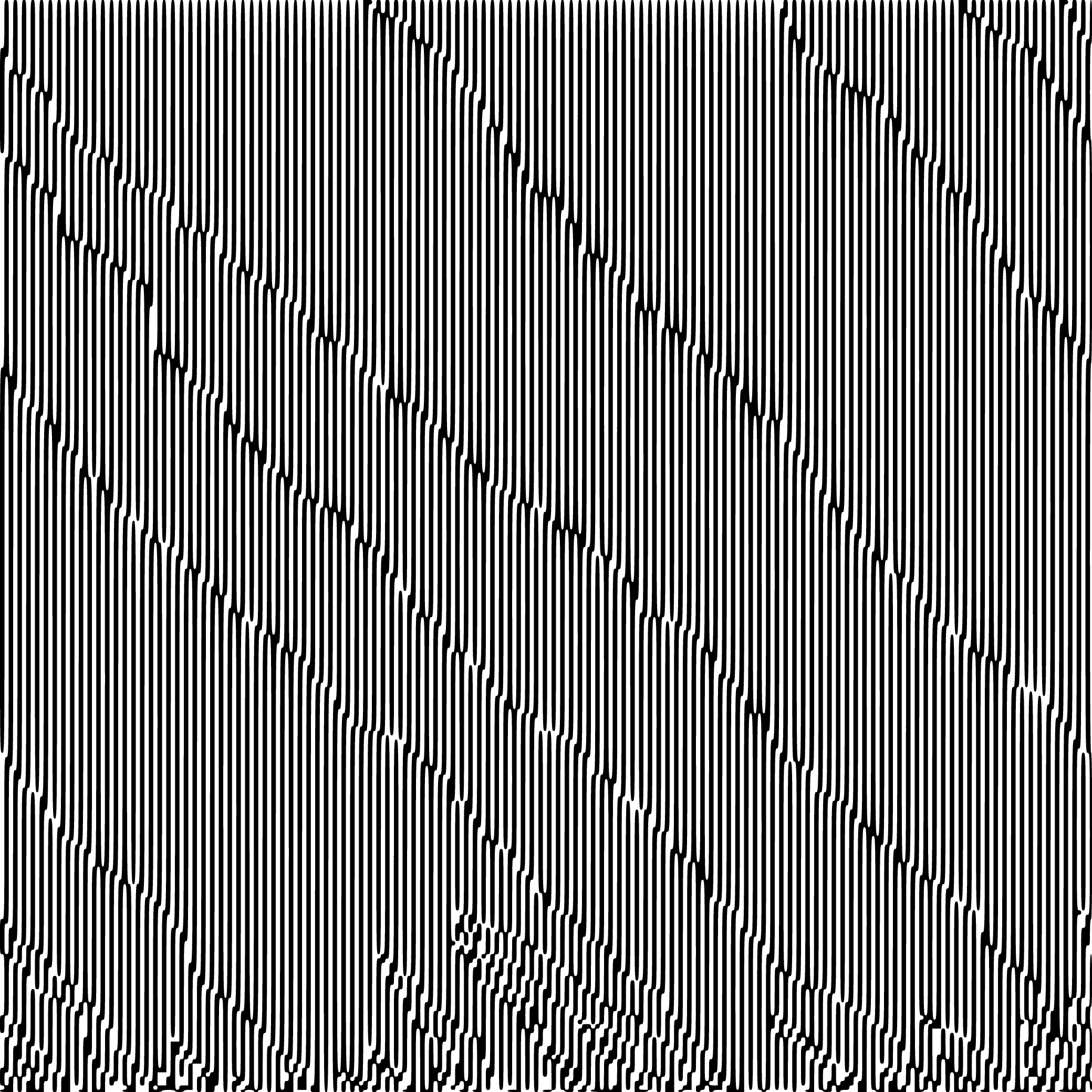}
       \hfill
       \includegraphics[scale=0.13]{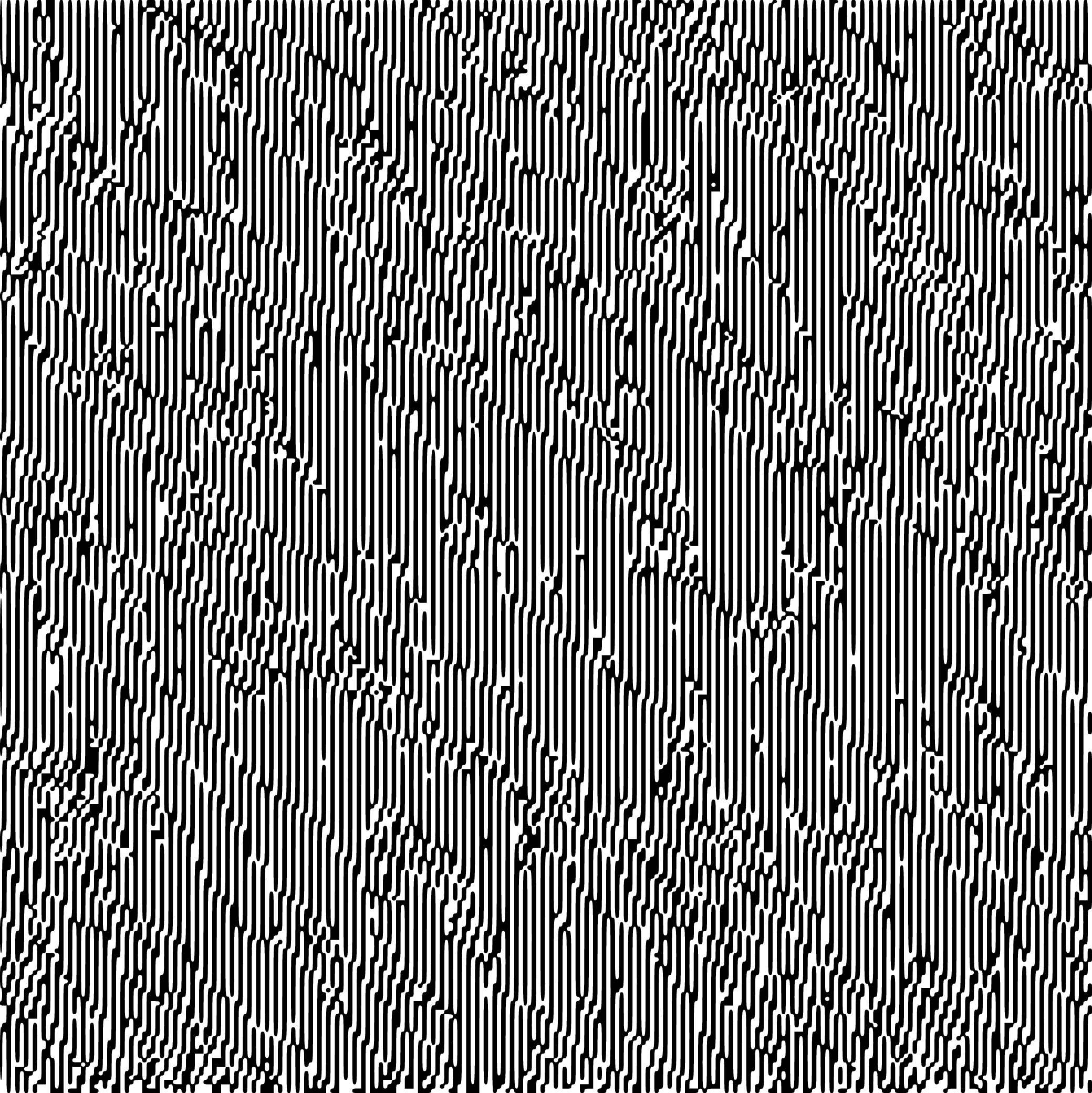}
        \caption{The "flip your neighbor" IPS, which produces stripes of checkerboard patterns with opposite parity. The semi-deterministic version on the left has parameters \mbox{$(p_{1|11},\,p_{1|10},\,p_{1|01},\,p_{1|00})=(0,\,0.8,\,0.2,\,1)$}, it's noisy version on the right  \mbox{$(0.1,\,0.8,\,0.2,\,0.9)$}. In both simulations the initial state is random.}
    \end{figure}

    \begin{figure}[H]
        \centering
       \includegraphics[scale=0.13]{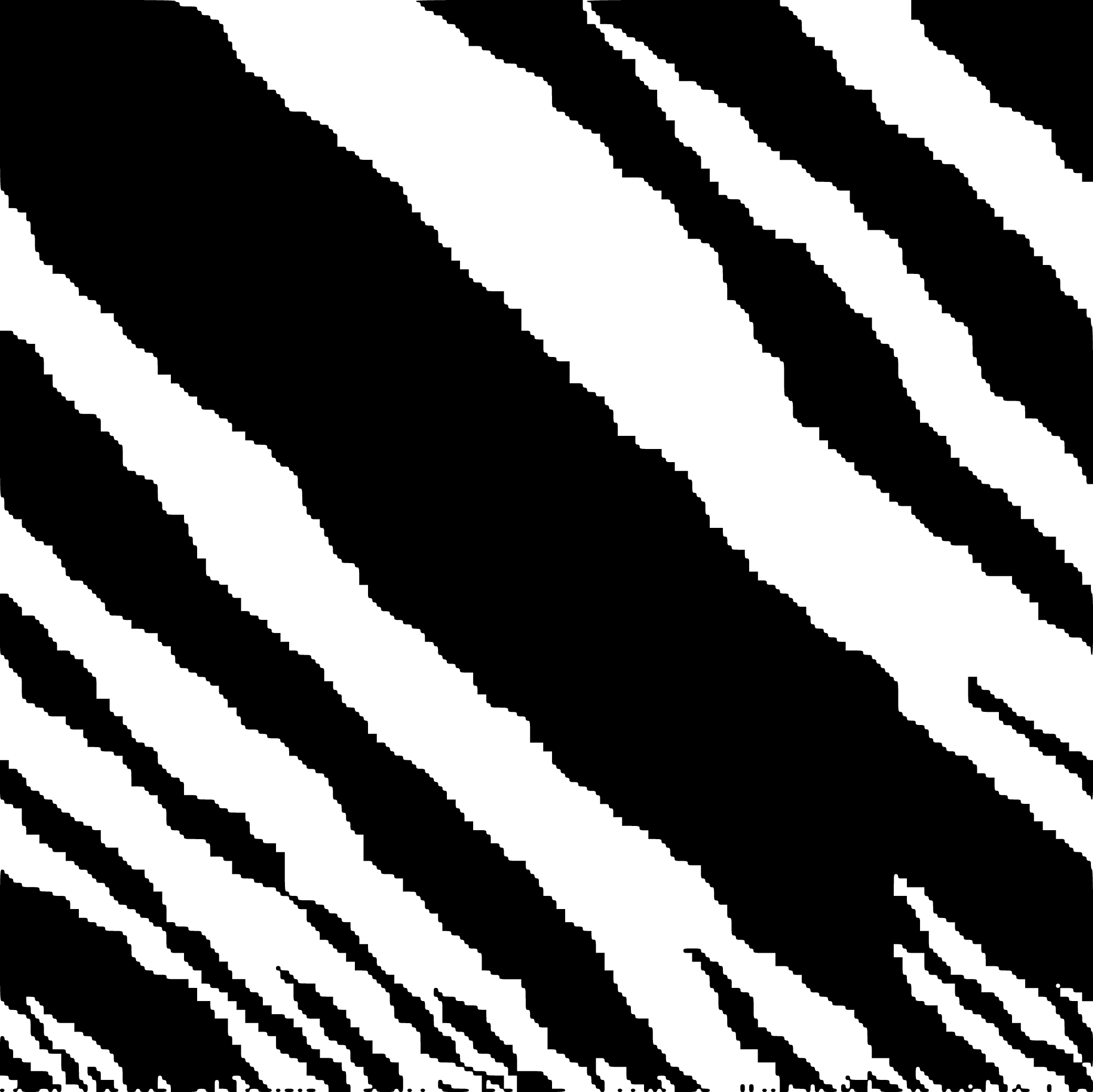}
       \hfill
       \includegraphics[scale=0.13]{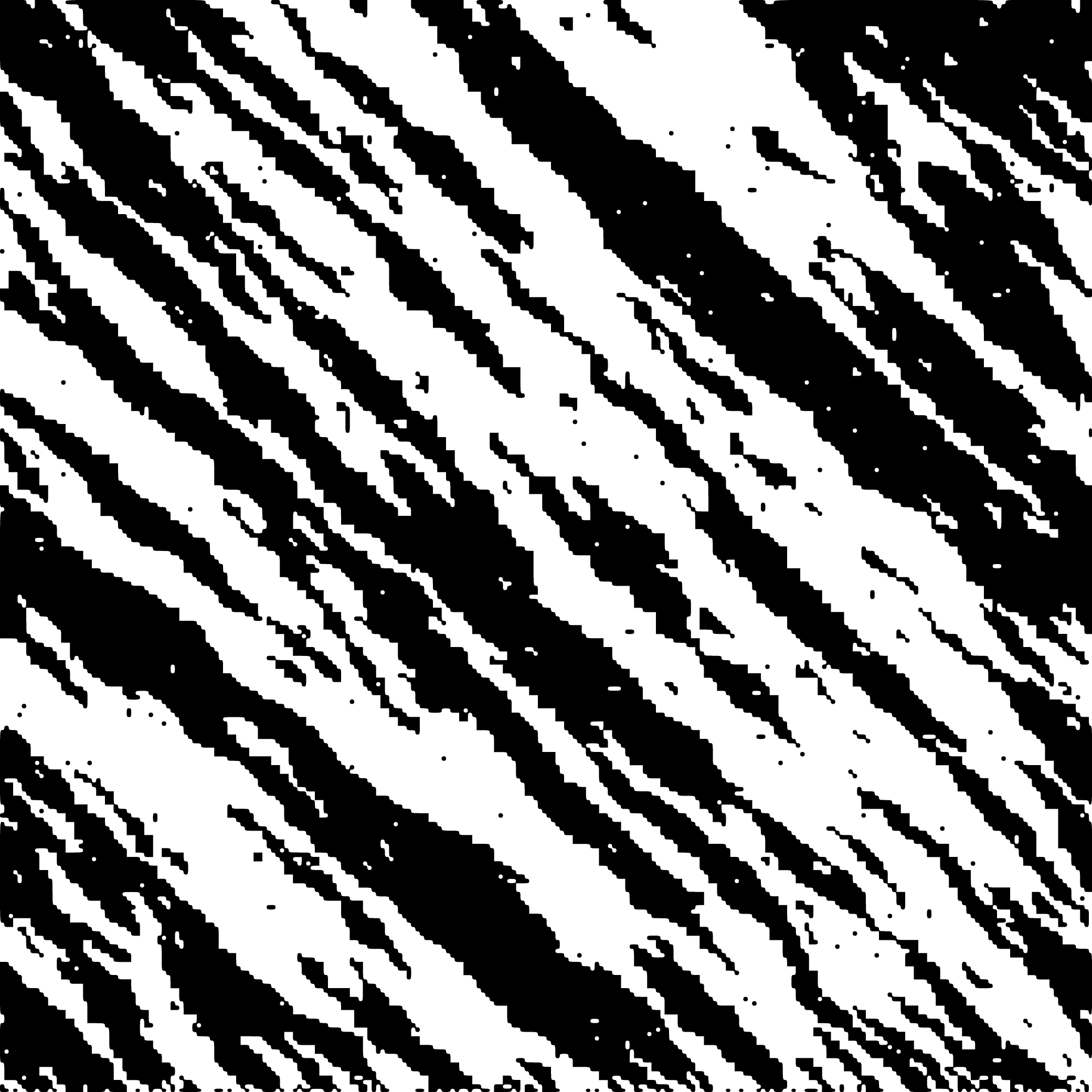}
        \caption{The same "flip your neighbor" IPSs after renaming zeroes and ones at every other site. The checkerboard patterns of opposite parity turned into homogenous stripes. In both simulations the initial state is random.}
    \end{figure}

    These IPS tend to be monotone in which case ergodicity in the presence of low noise follows by Gray~\cite{Gray:82} (see Theorem~\ref{pro:monoresult} below). Our contribution in Theorem~\ref{pro:main-result} yields ergodicity for the non-monotone cases.\\
    
\subsection{IPS with heavy bias}
These are the IPS in which one state is heavily favored. All sites try to switch to that state regardless of their neighborhood. The parameters of these IPS satisfy
$$p_{1|11}\approx p_{1|10}\approx p_{1|00} \approx 0 \;\text{ or }\; p_{1|11}\approx p_{1|01}\approx p_{1|00} \approx 1.$$

Simulations show that there is no need to additionally demand that~$p_{1|01} \approx 0$ or~$p_{1|10} \approx 0$. Only the dominating state will survive over time. Their ergodicity is immediately apparent as there is little interaction between sites. The unique invariant measure is a perturbation of Dirac's delta of "all zeroes" or "all ones" configurations.

\begin{figure}[H]
        \centering
       \includegraphics[scale=0.175]{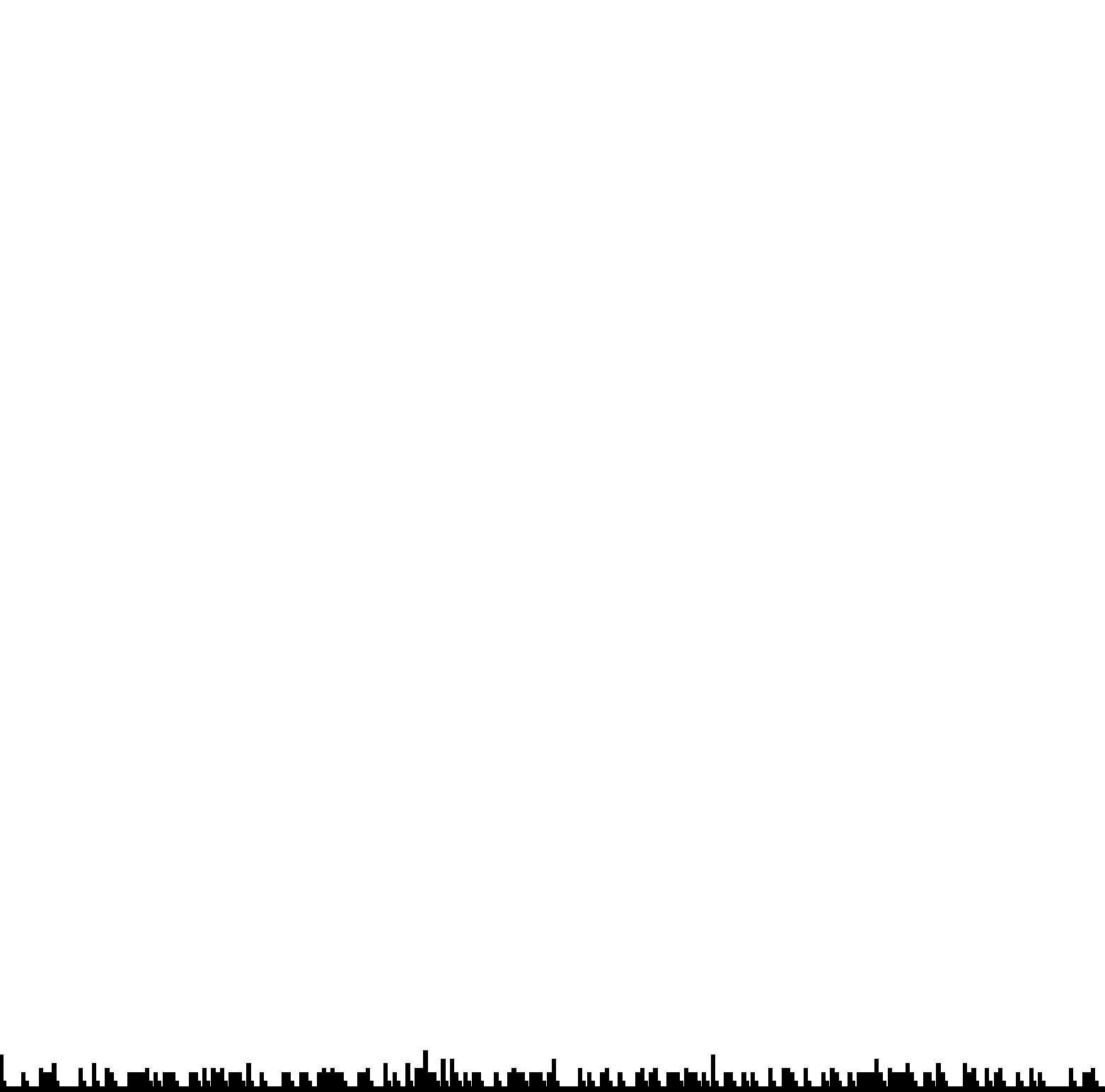}
       \hfill
       \includegraphics[scale=0.175]{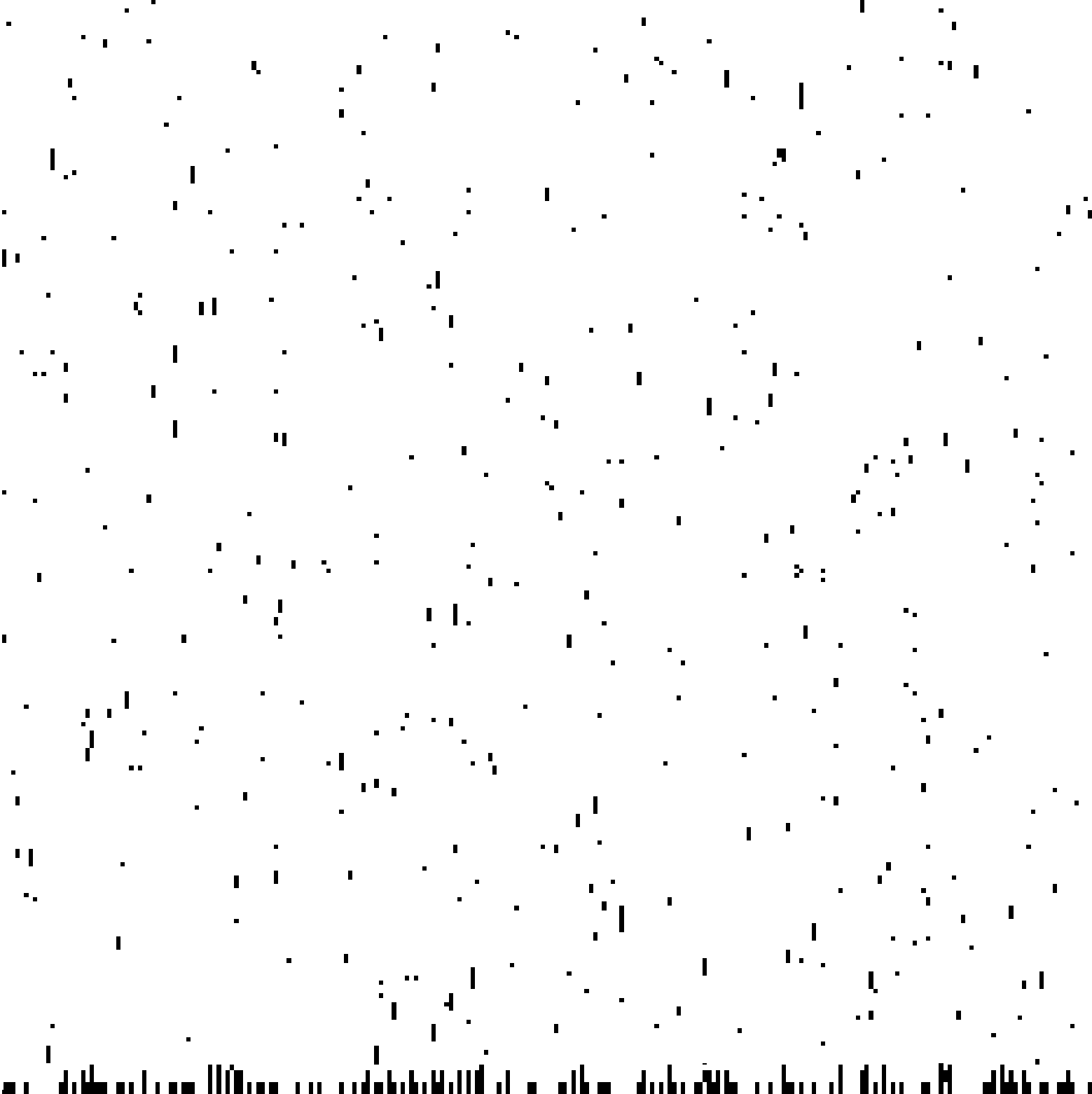}
        \caption{Evolution of the "turn to zero" IPS. The deterministic version on the left has parameters \mbox{$(p_{1|11},\,p_{1|10},\,p_{1|01},\,p_{1|00})=(0,\,0,\,0,\,0)$}, it's noisy version on the right  \mbox{$(0.1,\,0.1,\,0.1,\,0.1)$}. In both simulations the initial state is random.}
    \end{figure}
\begin{figure}[H]
        \centering
       \includegraphics[scale=0.175]{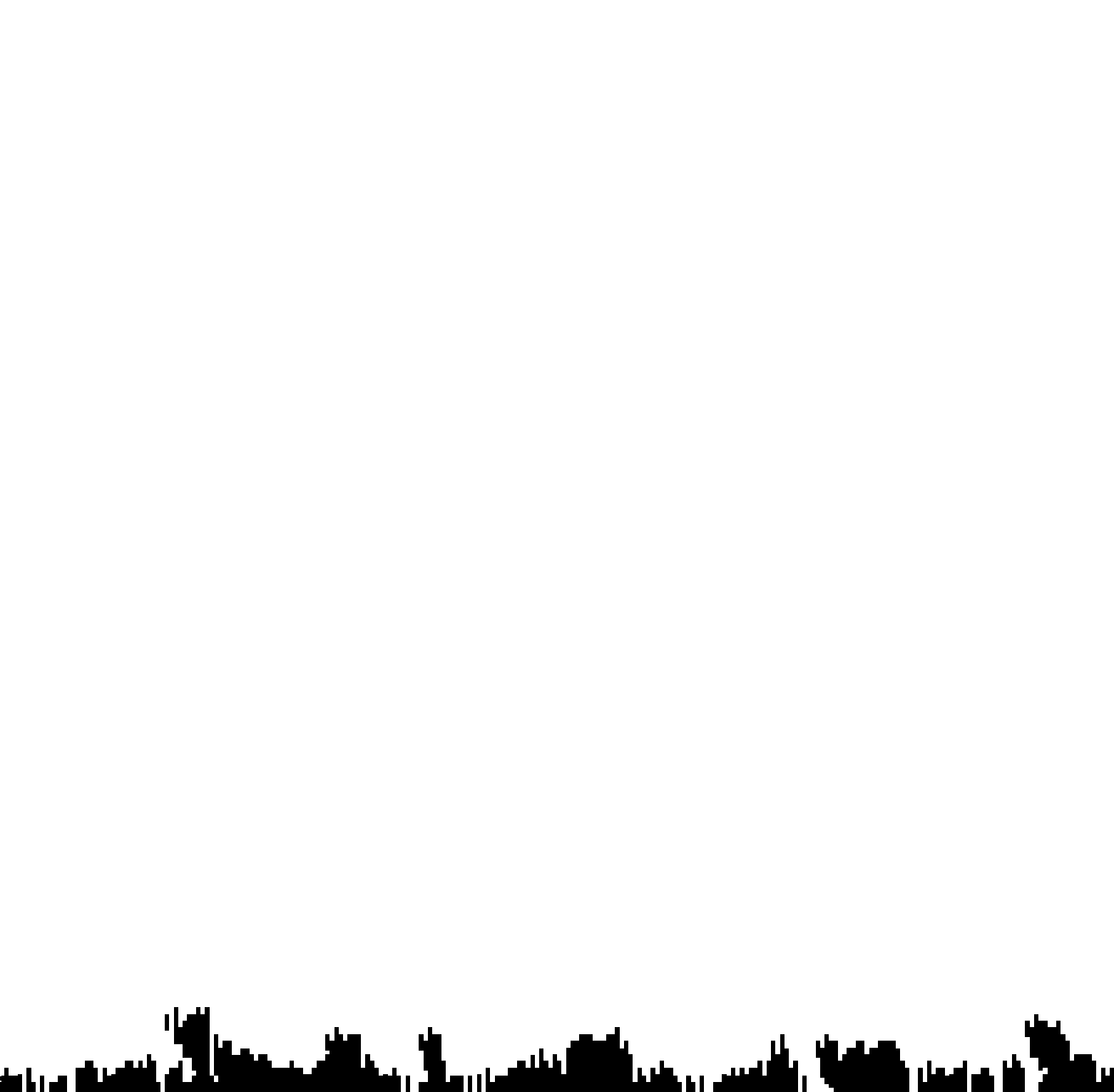}
       \hfill
       \includegraphics[scale=0.175]{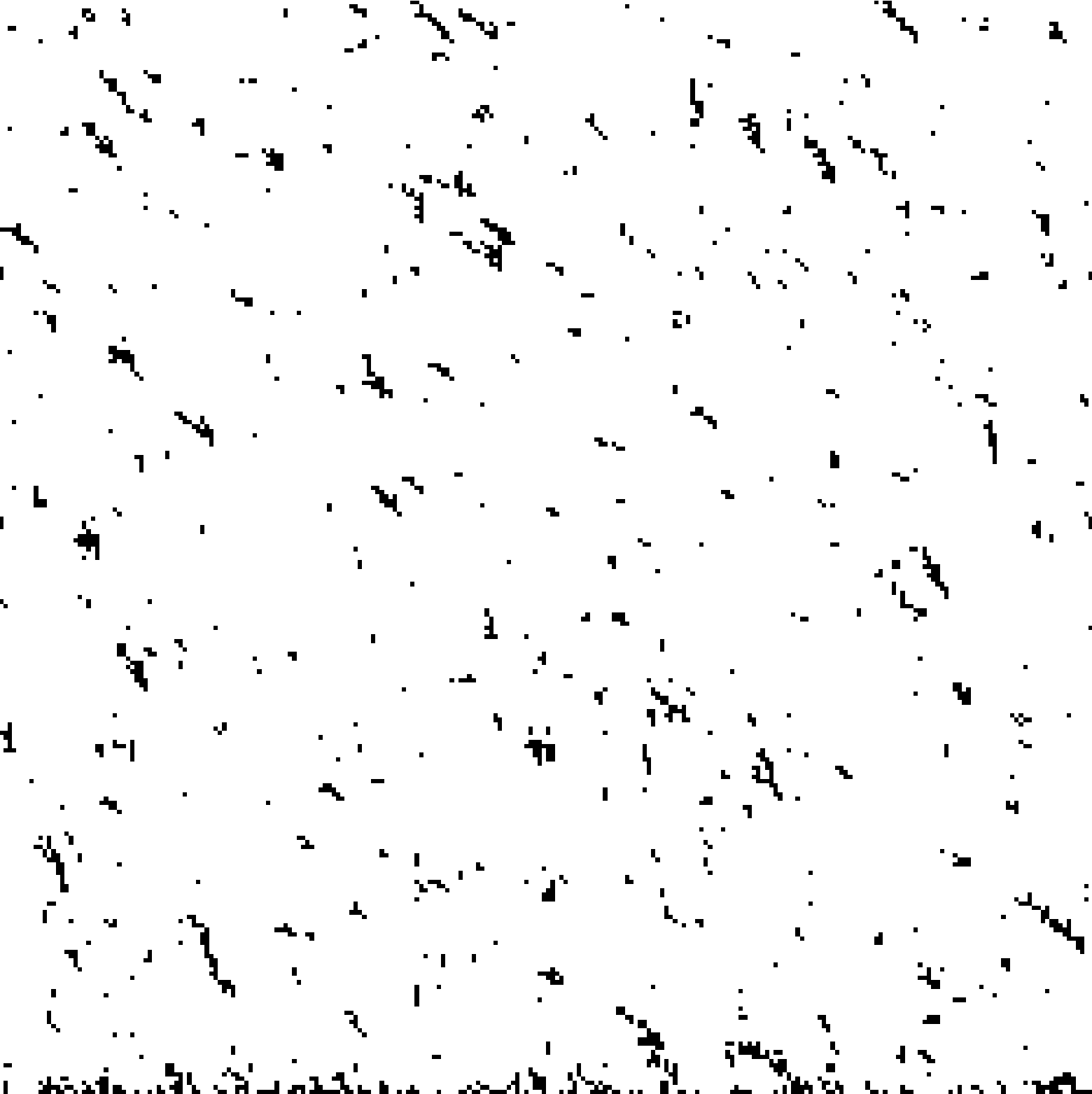}
        \caption{Evolution of a weaker version of the "turn to zero" IPS. The deterministic version on the left has parameters \mbox{$(p_{1|11},\,p_{1|10},\,p_{1|01},\,p_{1|00})=(0,\,0,\,1,\,0)$}, it's noisy version on the right  \mbox{$(0.1,\,0.1,\,0.9,\,0.1)$}. In both simulations the initial state is random.}
    \end{figure}

\subsection{Coalescing and Annihilating Contact Process}
The Coalescing Contact Process is a model of population growth where one state is interpreted as empty space and the other as members of a spreading population without spontaneous births. These IPS are given by parameters 
$$p_{1|11},\,p_{1|10},\,p_{1|01} \gg p_{1|00}\approx 0, \;\text{ or }\; 1\approx p_{1|11} \gg p_{1|10},\,p_{1|01},\,p_{1|00}.$$

Here, the second set of parameters just arised from renaiming zeros and ones. The contact process famously exhibits phase transition. The critical manifold separates the parameters for which the population dies out from the parameters for which the population spreads infinitely with positive probability. For details we refer to Figure~\ref{fig:contact_process}.
\begin{figure}[H]
        \centering
       \includegraphics[scale=0.13]{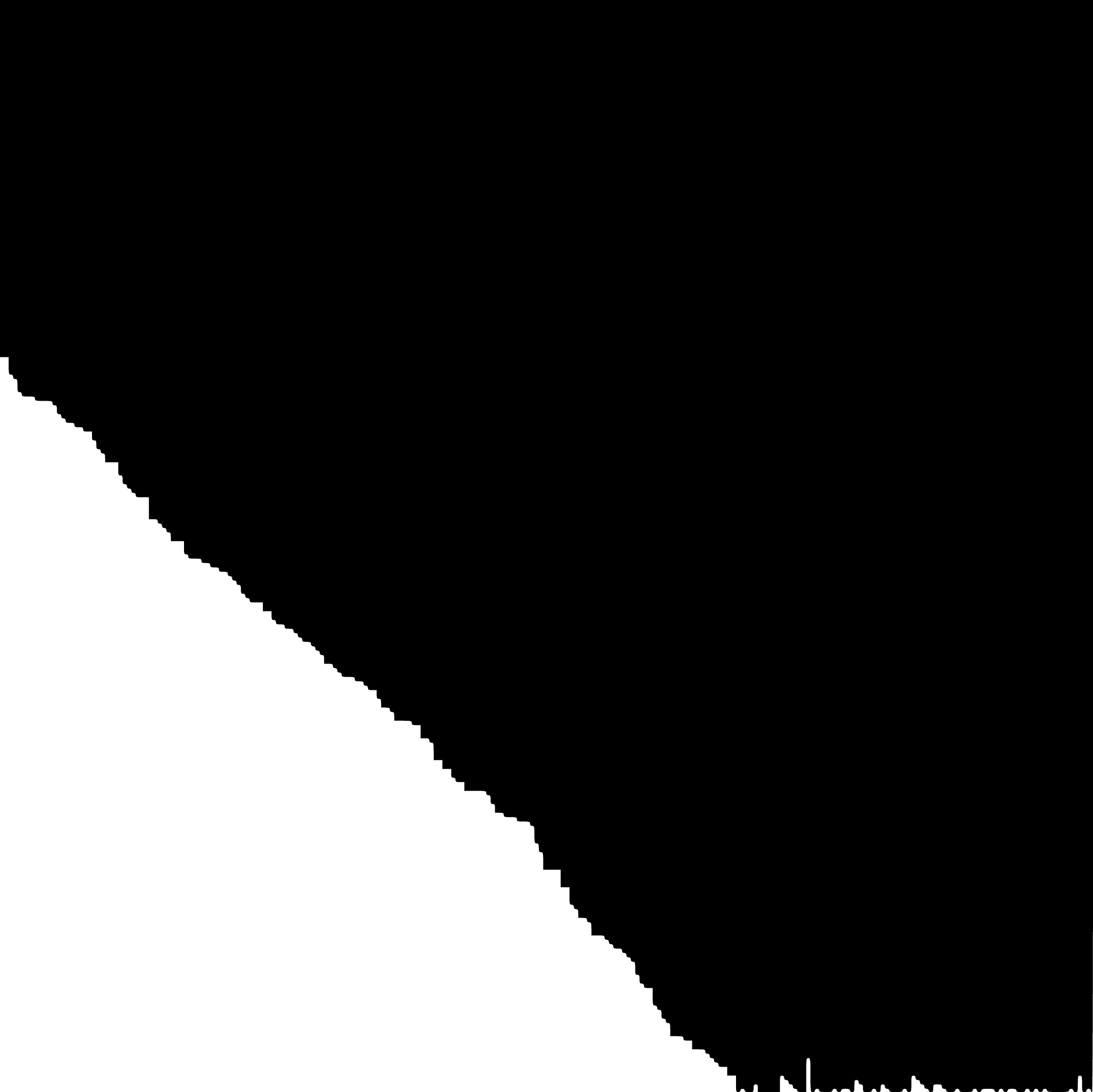}
       \hfill
       \includegraphics[scale=0.13]{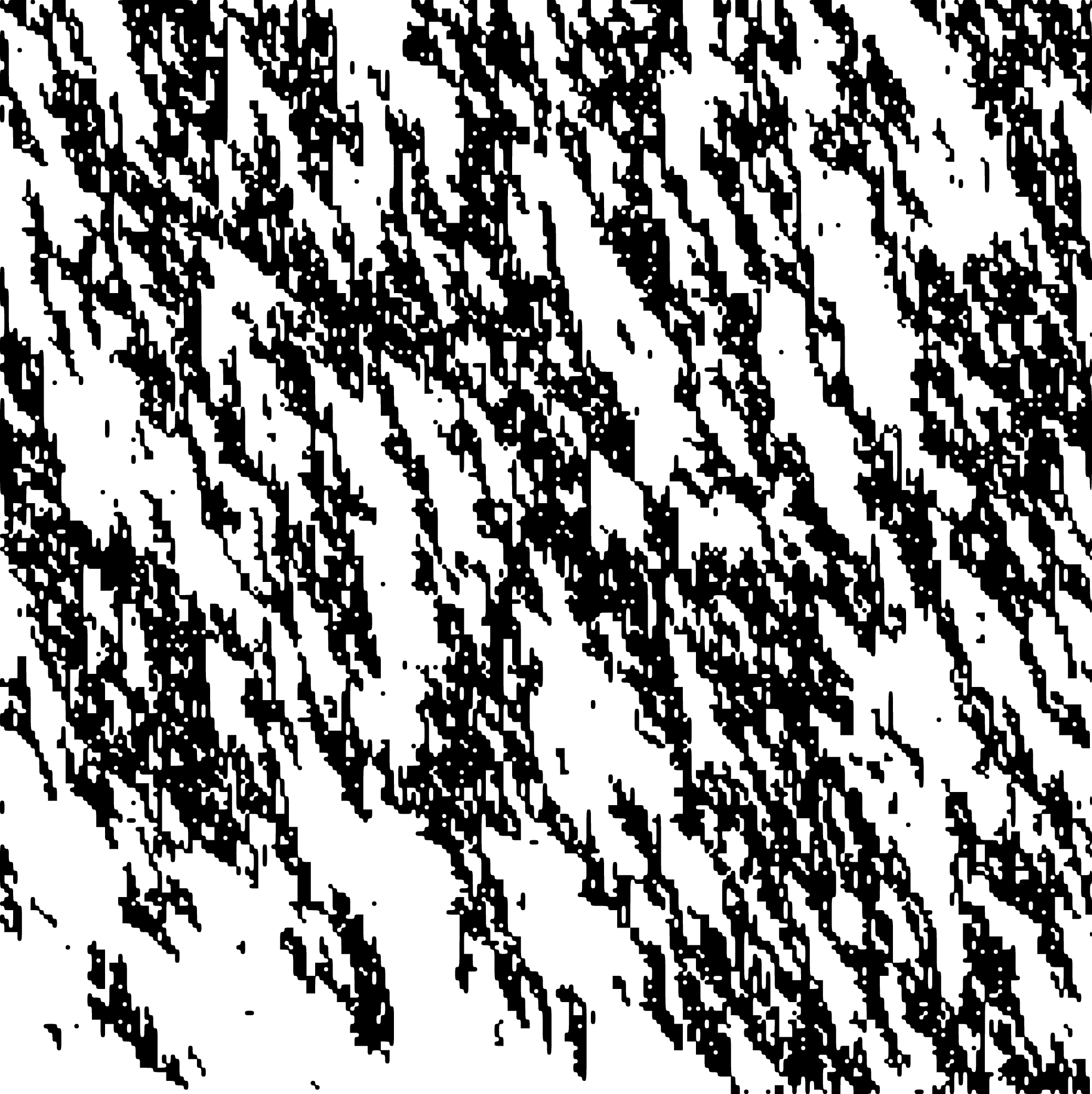}\\
       
       \includegraphics[scale=0.18]{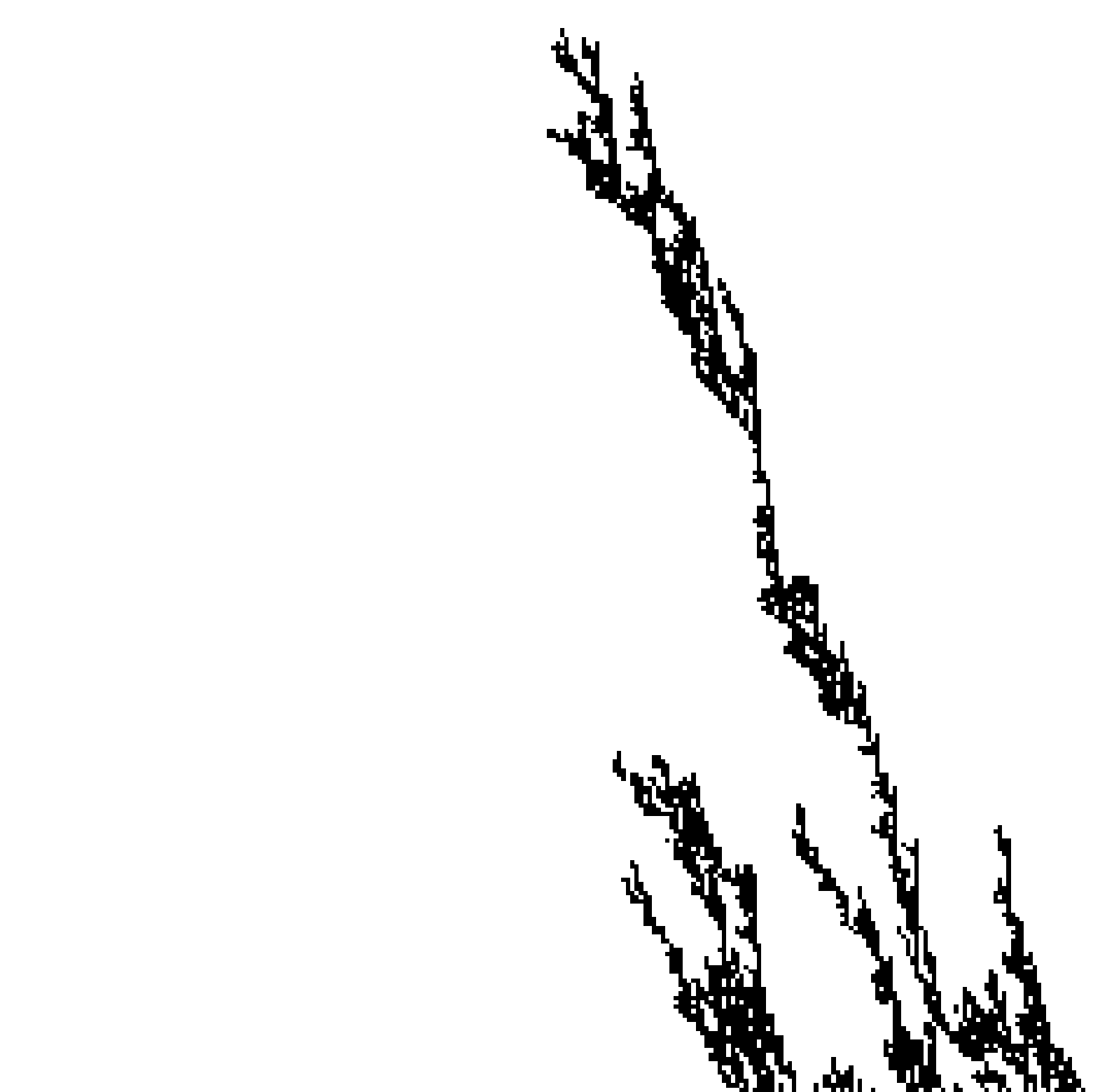}
       \hfill
       \includegraphics[scale=0.13]{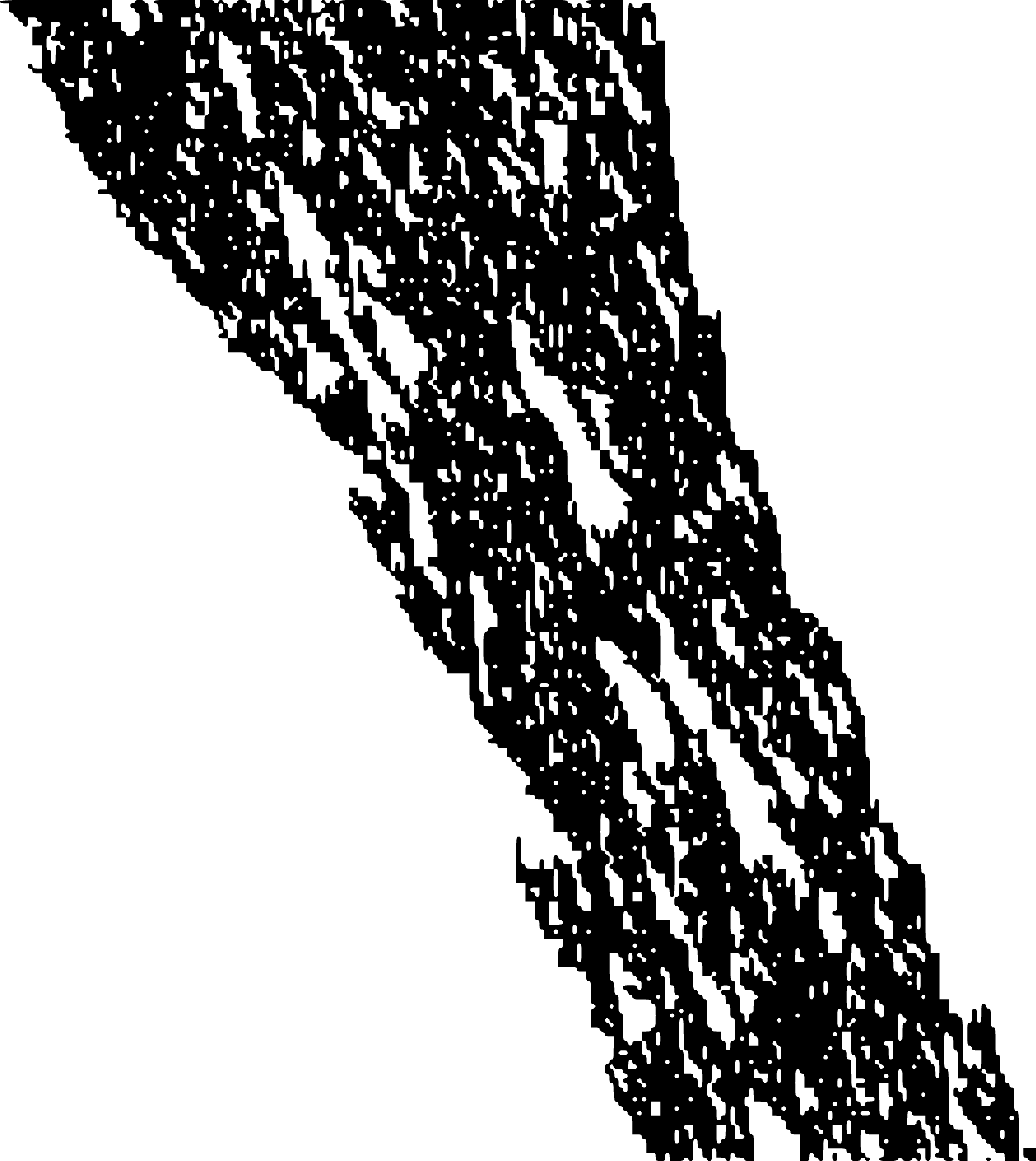}
        \caption{Evolution of the Coalescing Contact Process. In the upper left is the faultless version with parameters \mbox{$(p_{1|11},\,p_{1|10},\,p_{1|01},\,p_{1|00})=(1,\,0.7,\,0.7,\,0)$}. In the upper right the version with a positive probability of spontaneous deaths as well as births, given by parameters \mbox{$(0.75,\,0.75,\,0.75,\,0.01)$}.
        At the bottom we have the trajectories without spontaneous births but with spontaneous deaths, which represents the classic Contact Process, with two distinct behaviors, subcritical \mbox{$(0.75,\,0.75,\,0.75,\,0)$} and supercritical \mbox{$(0.8,\,0.8,\,0.8,\,0)$}. We note that for the pase transition one needs that~$p_{1|00} = 0$, which means that the underlying IPS do not have positive rates. The initial configuration consists of a randomly generated configuration on an interval on the right and zeroes everywhere else.} \label{fig:contact_process}
    \end{figure}

The Annihilating Contact Process is similar. The difference is if a site attempts to spread to an already occupied site it will be annihilated instead. This rule is described by parameters
$$\,p_{1|10},\,p_{1|01} \gg p_{1|11}\approx p_{1|00}\approx 0, \;\text{ or }\; 1\approx p_{1|11}\approx p_{1|00} \gg p_{1|10},\,p_{1|01}.$$
Like the coalescing variant, the Annihilating Contact Process exhibits a phase transition.
\begin{figure}[H]
        \centering
       \includegraphics[scale=0.1825]{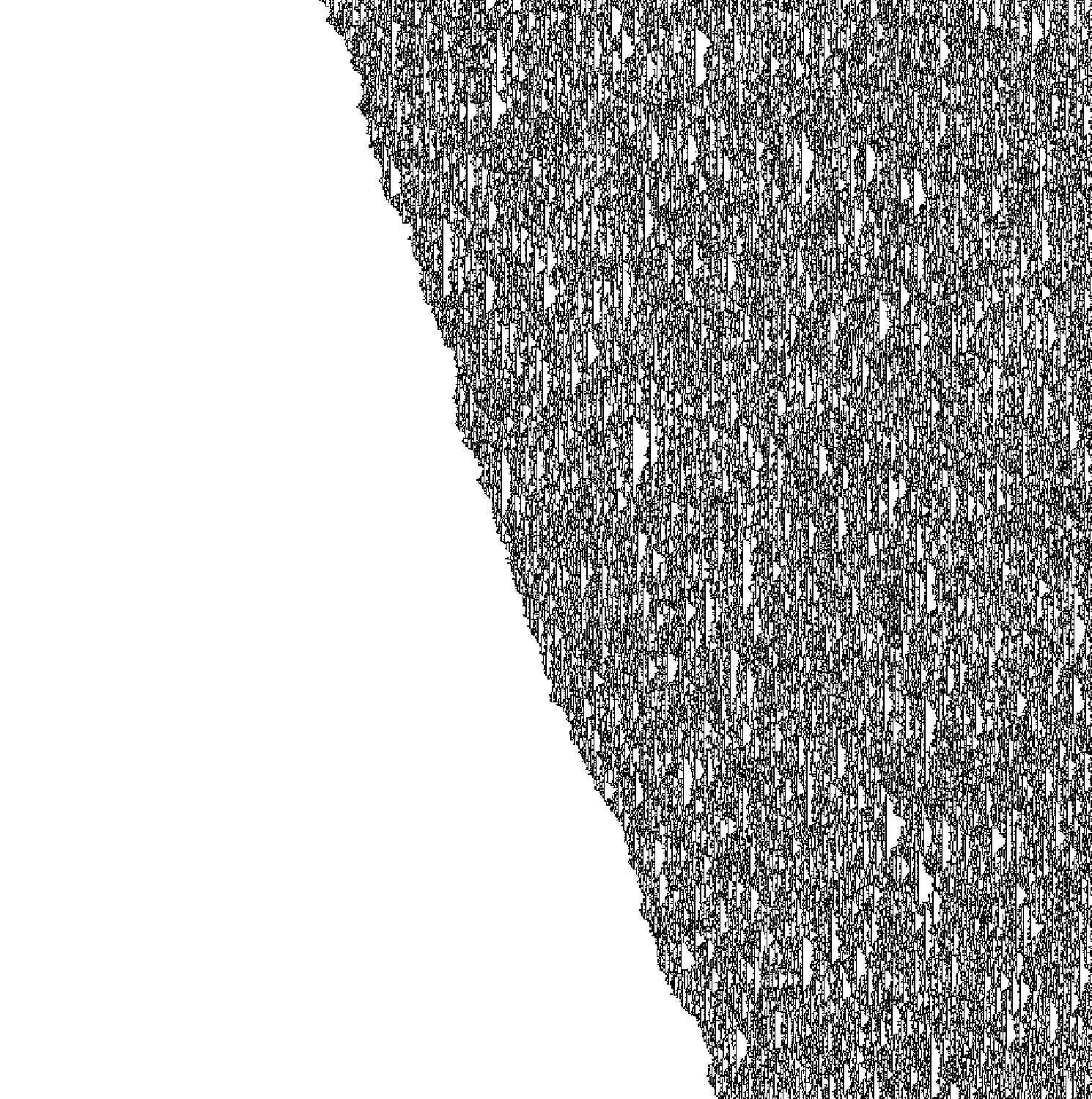}
       \hfill
       \includegraphics[scale=0.175]{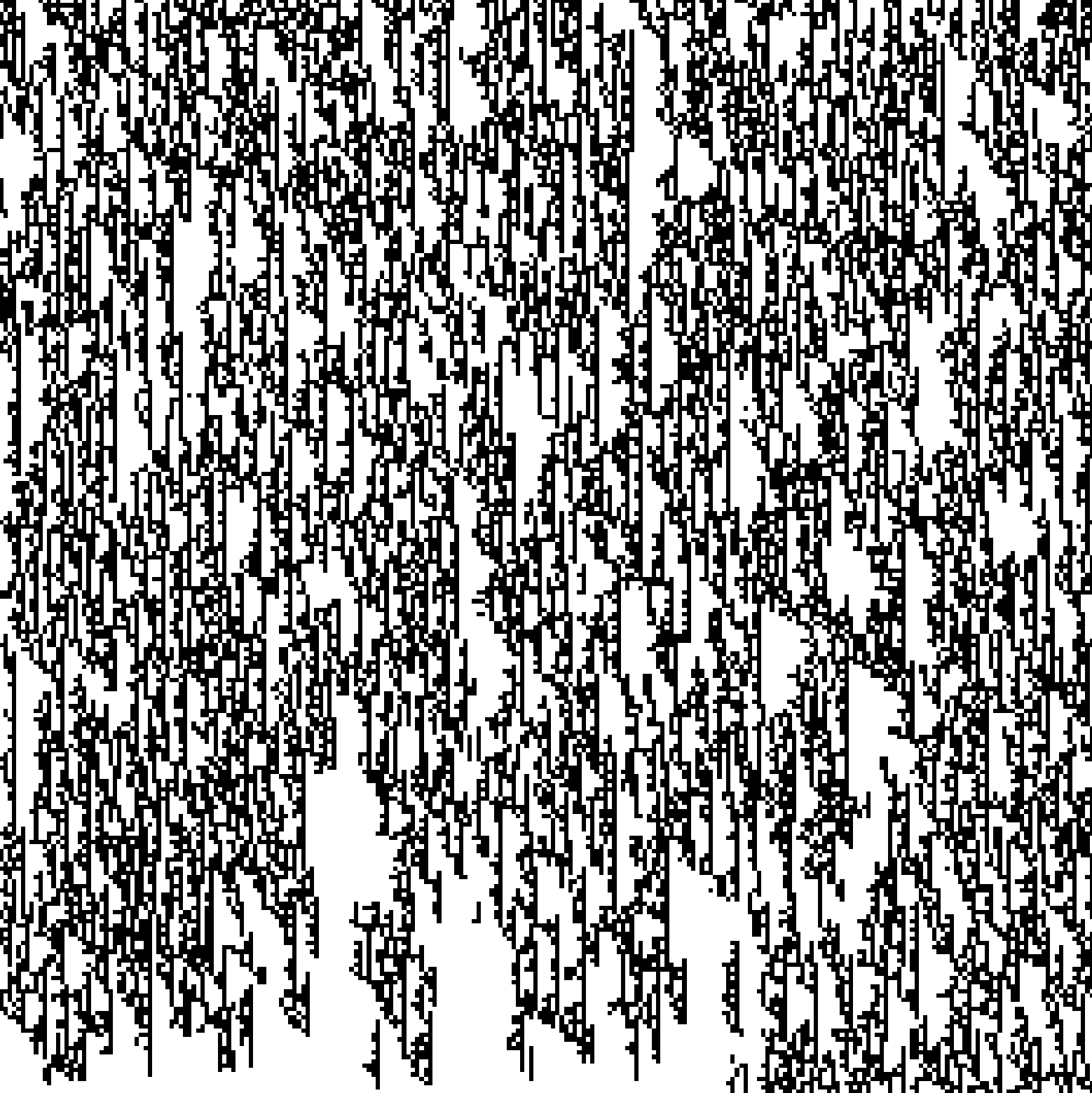}\\

       \includegraphics[scale=0.175]{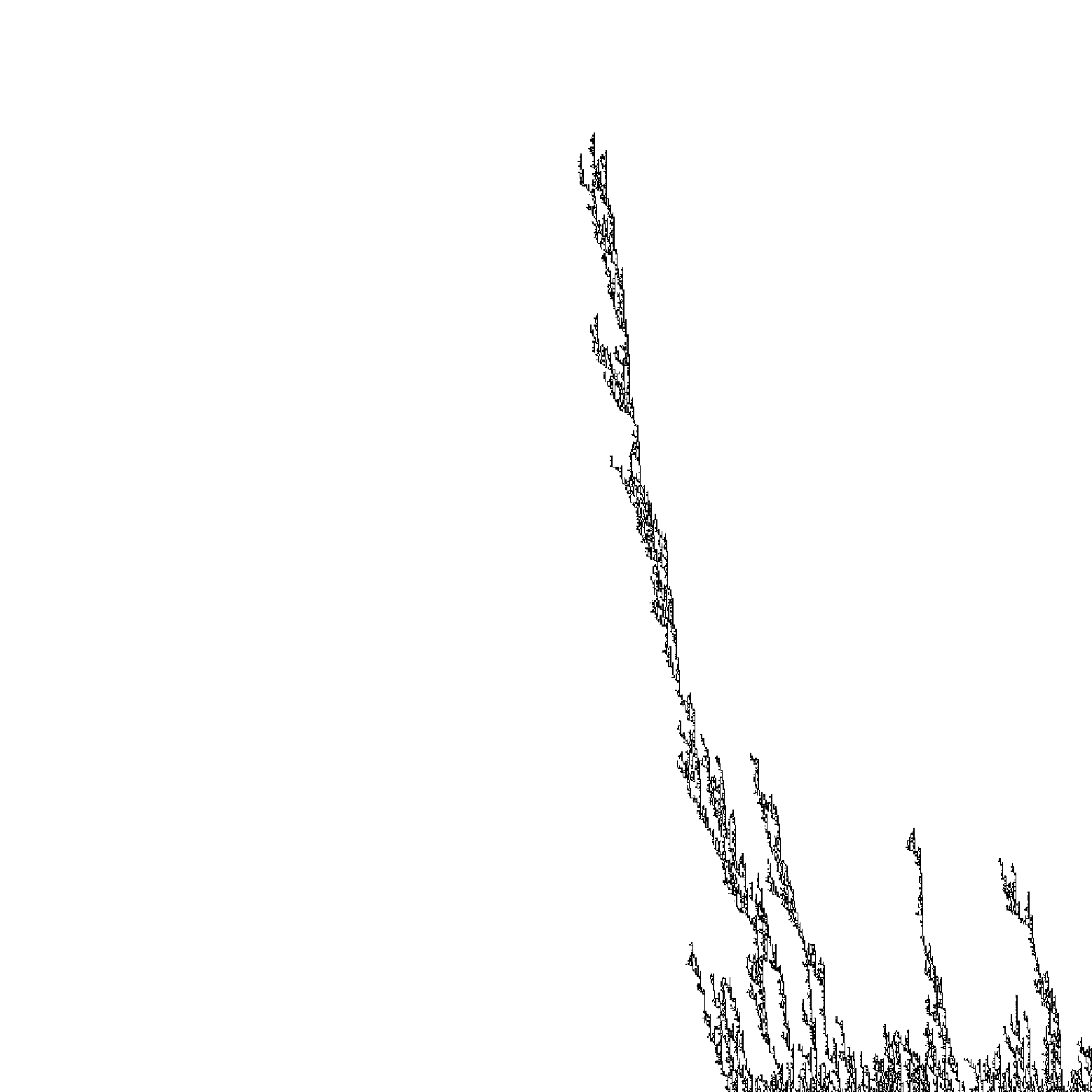}
       \hfill
       \includegraphics[scale=0.175]{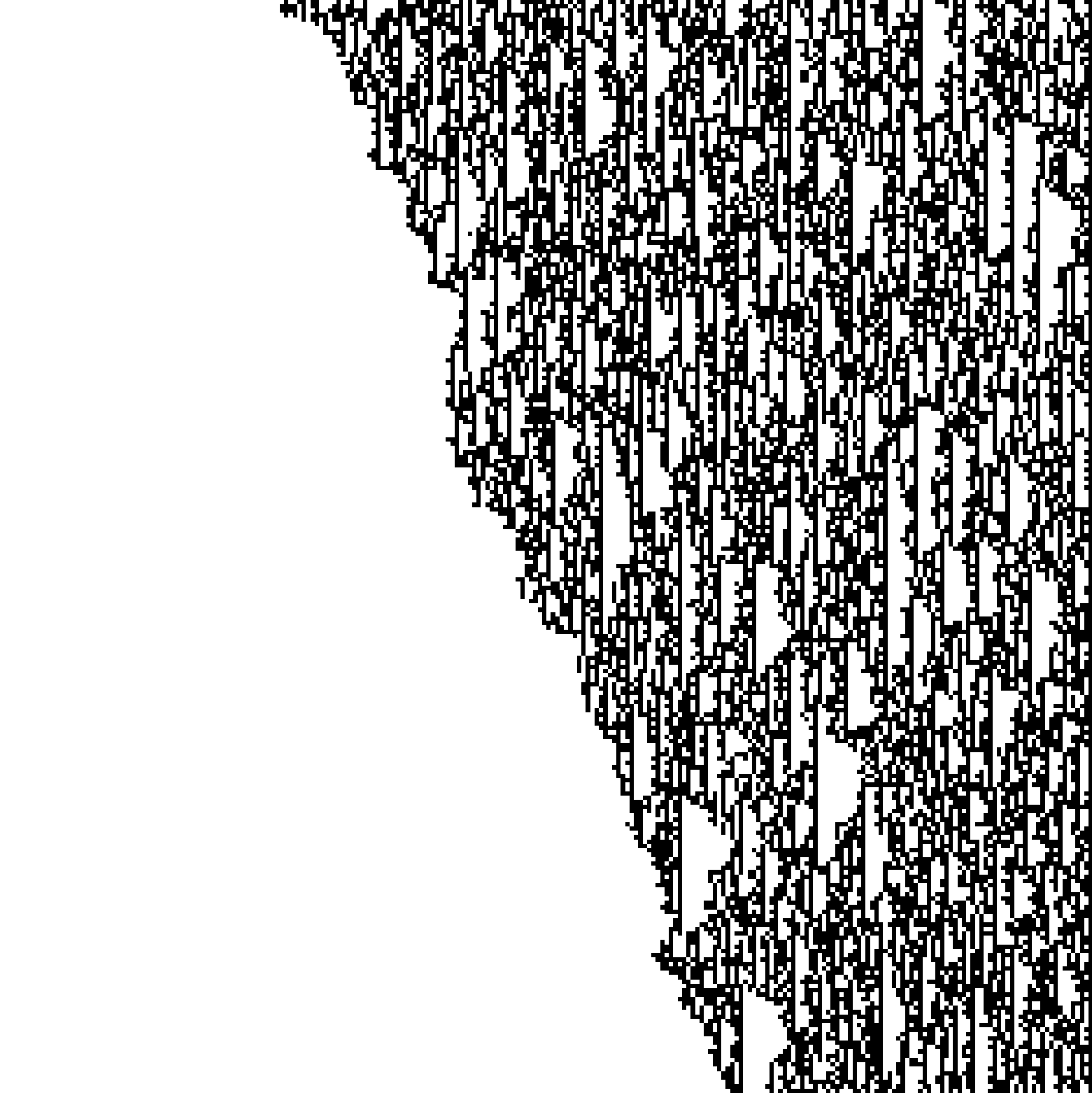}
        \caption{Evolution of the Annihilating Contact Process. In the upper right is the faultless version with parameters \mbox{$(p_{1|11},\,p_{1|10},\,p_{1|01},\,p_{1|00})=(0,\,1,\,1,\,0)$}. In the upper left the version with a positive probability of spontaneous births, given by parameters \mbox{$(0.01,\,0.9,\,0.9,\,0.01)$}.
        At the bottom, we have the trajectories without spontaneous births, which represents the classic Annihilating Contact Process, with two distinct behaviors, subcritical \mbox{$(0,\,0.9,\,0.9,\,0)$} and supercritical \mbox{$(0,\,0.95,\,0.95,\,0)$}. We observe that phase transition only occurs if the rates are not positive i.e. when~$p_{1|00} =0$. The initial configuration consists of a randomly generated configuration on an interval on the right and zeroes everywhere else.}
    \end{figure}

    If there are no spontaneous births (i.e. $p_{1|00}=0$ or $p_{1|11}=1$) then the ergodicity of these processes is equivalent to the population dying out almost surely. The work of Griffeath~\cite{Griffeath:78} on additive and cancellative IPSs suggests that these IPSs are exponentially ergodic if spontaneous births are possible (see Theorem~\ref{pro:additive} and Theorem~\ref{pro:cancellative} below). 

\subsection{Noisy IPS}
These are the IPS in which sites flip their states with high probability with each update, with little influence from neighbors. The parameters of these IPS satisfy
\begin{align*}
 & p_{1|11} \approx p_{1|10}\approx 0,\; p_{1|01}\approx p_{1|11}\approx 1 \;\text{ or }\; p_{1|11}\approx 0,\; p_{1|10}\approx p_{1|01}\approx p_{1|11}\approx 1 \\
 & \;\text{ or }\; p_{1|11}\approx p_{1|10}\approx p_{1|01}\approx 0,\; p_{1|00}\approx 1.
\end{align*}
As a consequence of this large frequency of flips, whether a given site has state zero or one largely depends on the parity of the number of updates at that site which is independent of the initial configuration. Thus exponential ergodicity follows.
\begin{figure}[H]
        \centering
       \includegraphics[scale=0.13]{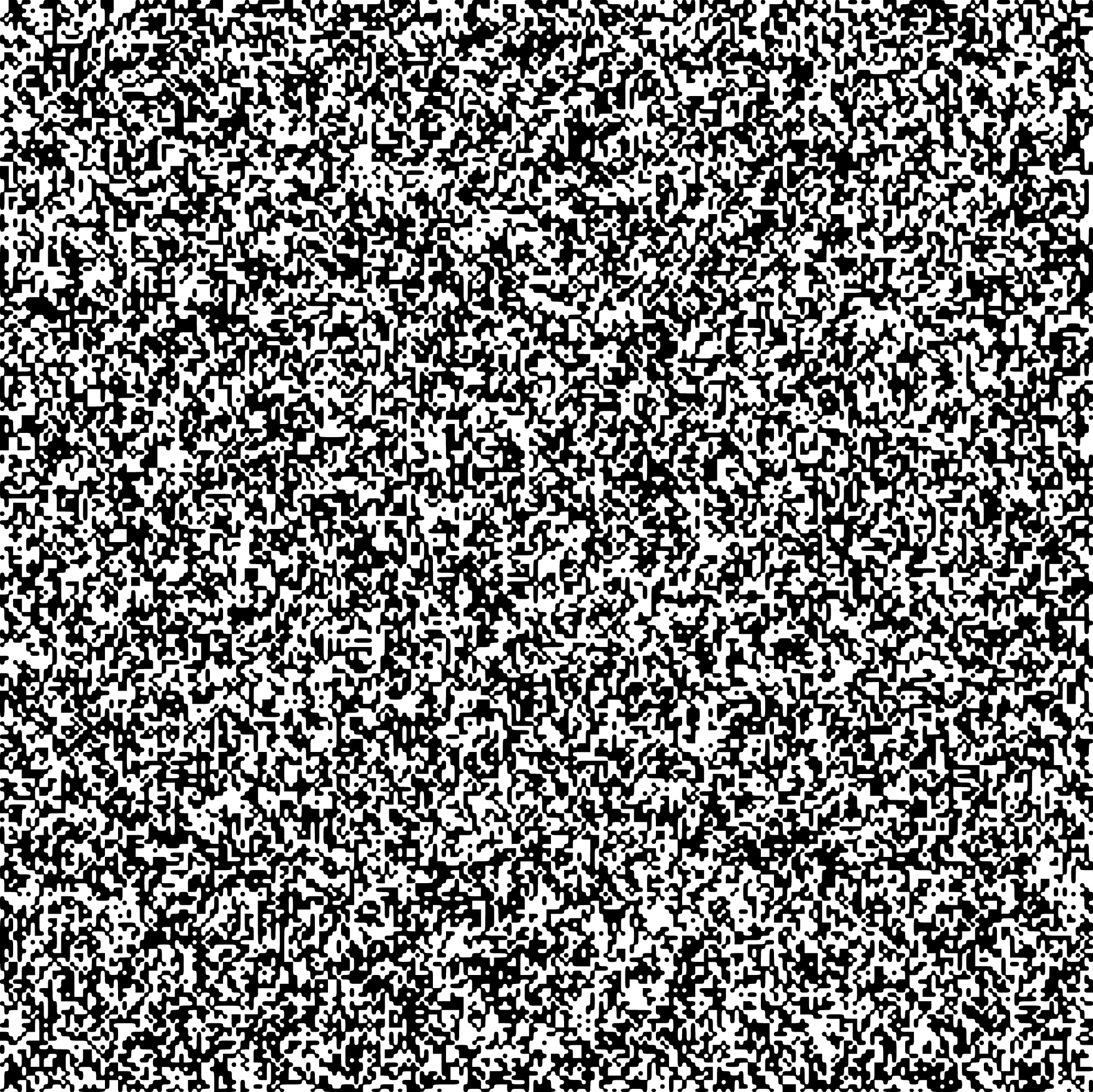}
       \hfill
       \includegraphics[scale=0.175]{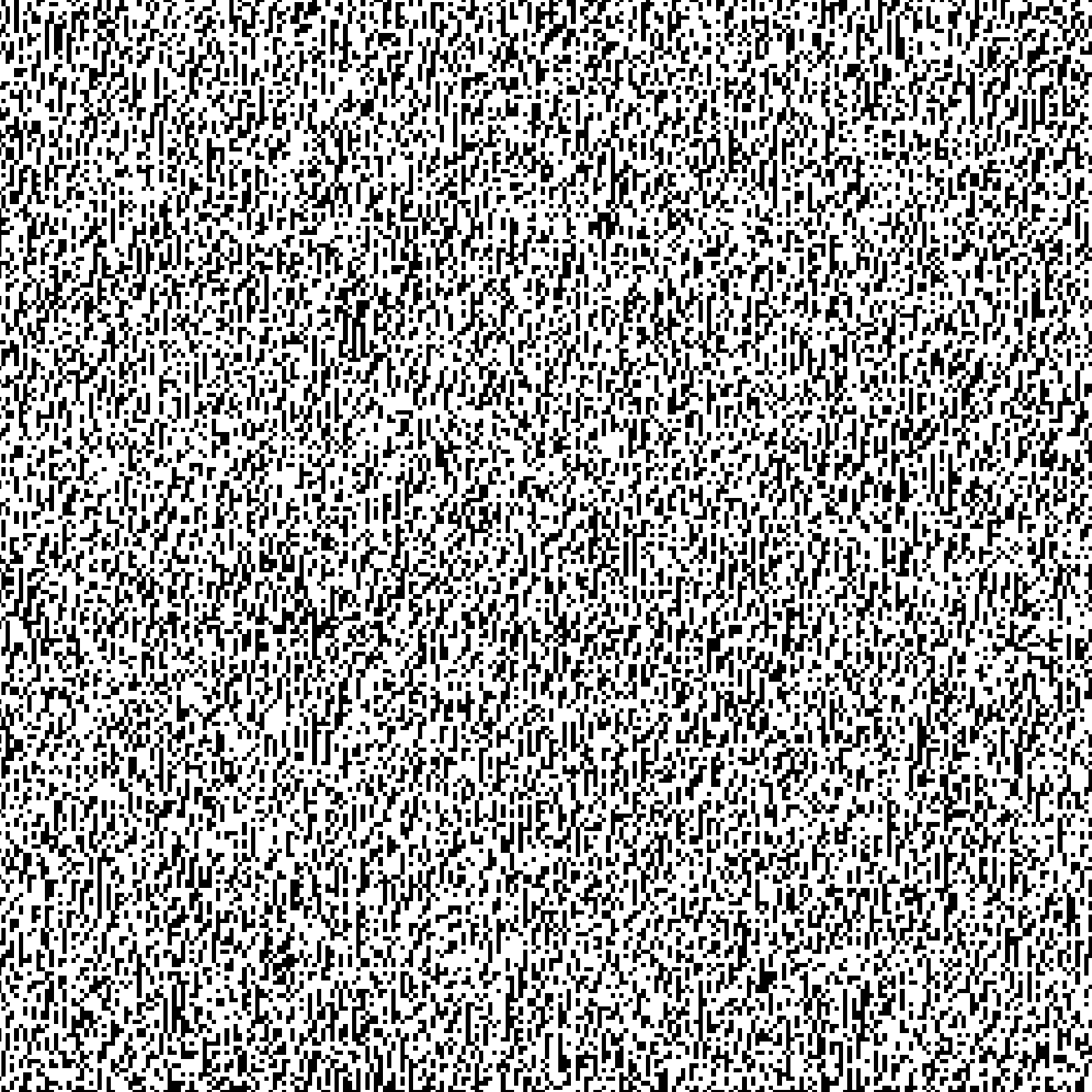}
        \caption{On the left evolution of the "flip your state" IPS i.e. one with parameters \mbox{$(p_{1|11},\,p_{1|10},\,p_{1|01},\,p_{1|00})=(0,\,0,\,1,\,1)$}. On the right the IPS \mbox{$(0,\,1,\,1,\,1)$}. Both produce a pattern of white noise, with the latter having slightly higher correlations between states of neighboring sites.}
    \end{figure}

\subsection{Walls IPS}\label{subsec:walls}

The rule of these IPS preserves one state with high probability regardless of the environment and preserves the other state only in presence of the first. The trajectories consist of long walls which depending on parameters coalesce into one infinite connected component. These IPS are given by parameters
$$p_{1|11} \approx p_{1|01}\approx p_{1|00}\approx 0,\; p_{1|10}\approx 1 \;\text{ or }\; p_{1|11}\approx p_{1|10}\approx p_{1|00} \approx 1,\; p_{1|01}\approx 0.$$
    \begin{figure}[H]\label{fig:walls}
        \centering
         \includegraphics[scale=0.175]{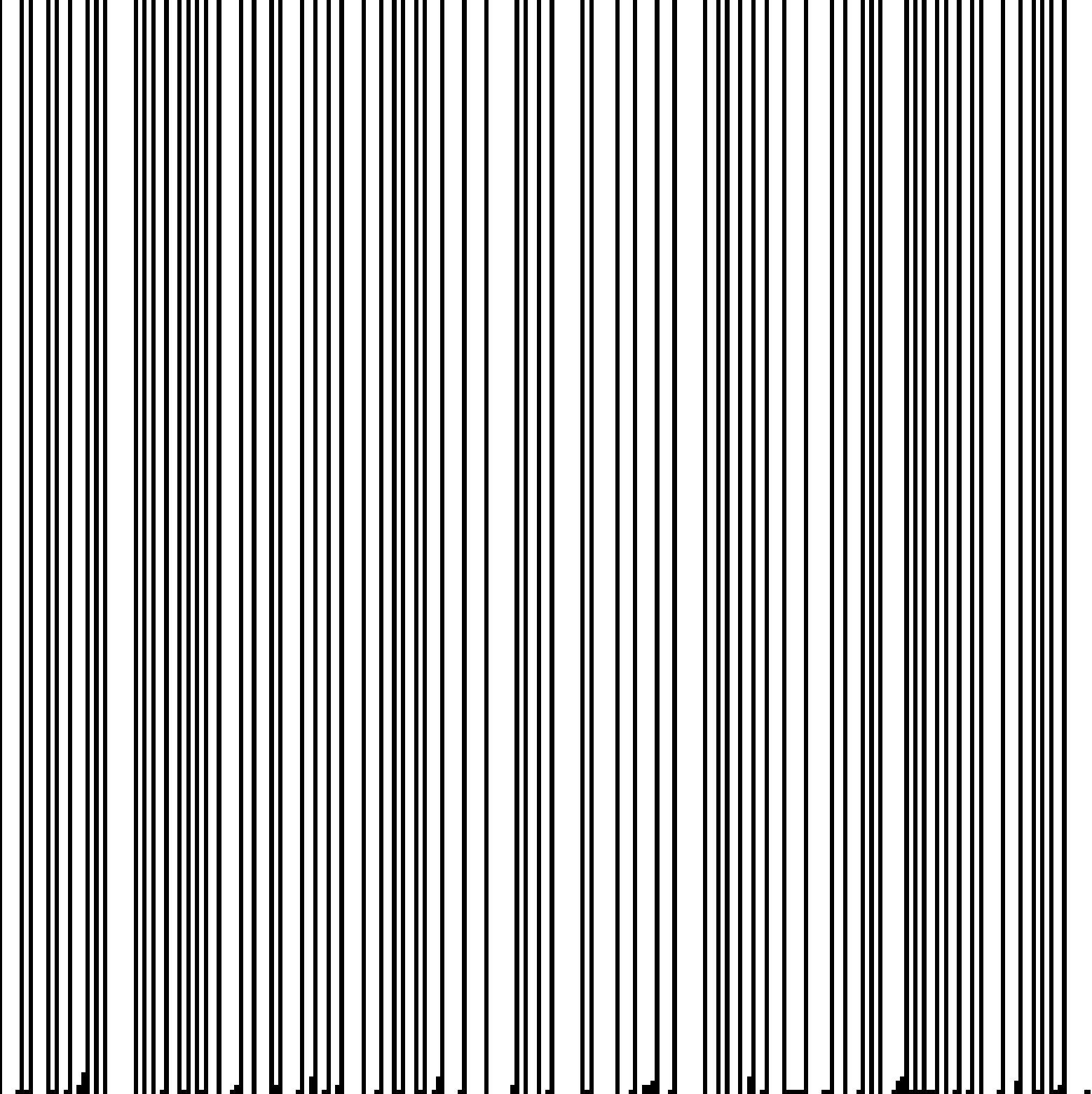}\\
       \includegraphics[scale=0.175]{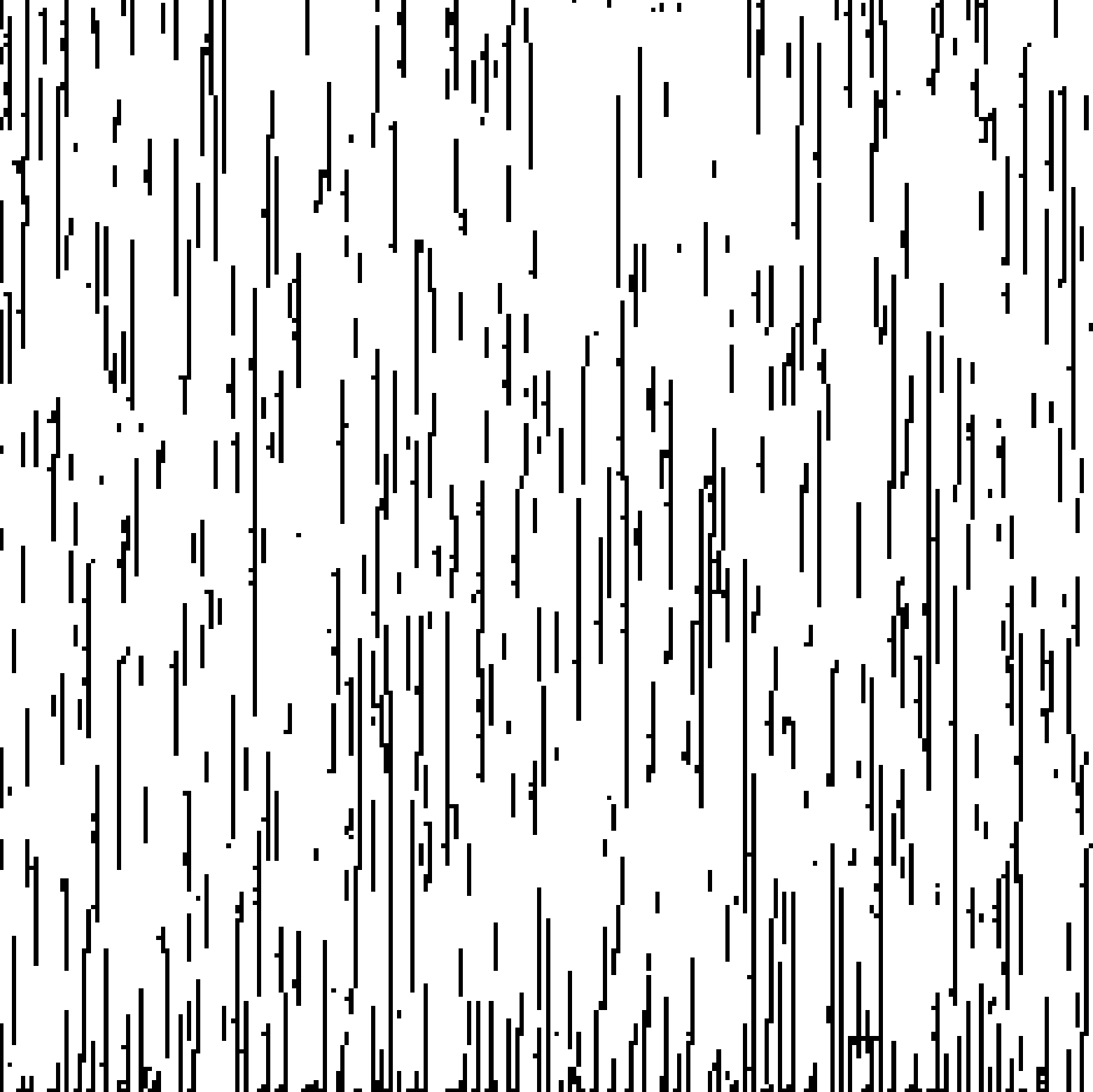}\hfill
       \includegraphics[scale=0.175]{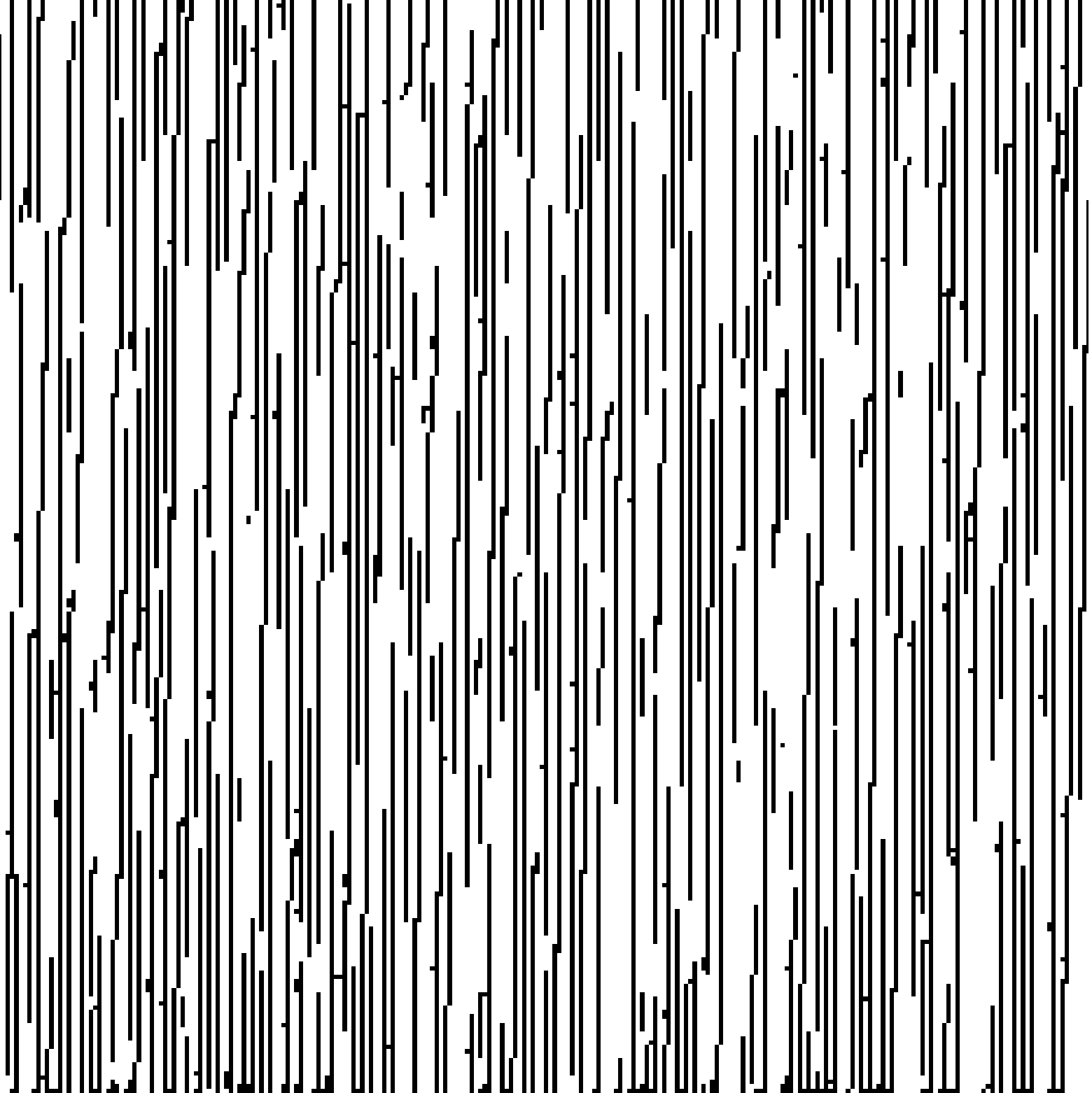}
        \caption{Trajectories of the Walls IPSs given by parameters \mbox{$(p_{1|11},\,p_{1|10},\,p_{1|01},\,p_{1|00})=(0,\,1,\,0,\,0)$}(top), \mbox{$(0,\,0.9,\,0.02,\,0.02)$} (left) and \mbox{$(0,\,0.99,\,0.02,\,0.02)$} (right). The longer survival times of ones in the IPSs on the right allowed the walls to connect into one infinite connected component. In the most extreme case on top, the trajectory quickly reaches a stationary configuration. Any configuration in which there are no consecutive ones is stationary.}
    \end{figure}

The ergodicity of these IPSs in low noise regimes remains an open problem. There is some overlap between Walls IPS and cancellative IPS and in some cases, they can be modified to equivalent IPSswith periodic monotone rules. We elaborate on this further in Section~\ref{sec:application}.

\bigskip \section{Time Scaling}\label{sec:time-scaling}


The main goal of this article is to study the ergodicity of IPS. It natural question to ask under which transformations of the transition matrix ergodicity is preserved. There is an obvious invariant: scaling time cannot change ergodicity. The main result of this section, namely the Time-Scaling Lemma, shows that scaling time is equivalent to taking a convex combination of the original transition matrix with the identity matrix.
\\
\newpage
\begin{lem}[Time-Scaling]\label{pro:time-scaling} Let us consider a given transition matrix~$P$ (cf.~Definition~\ref{def:IPS}) and let~$I$ denote the identity transition matrix. Then for all $\lambda \in (0,1]$ it holds:
$$\bb{\zeta_t}_{t\geq 0} \in \text{IPS}\bb{P} \text{ if and only if } \bb{\zeta_{\lambda t}}_{t\geq 0} \in \text{IPS}\bb{\lambda P + (1-\lambda))I}.\\$$
\end{lem}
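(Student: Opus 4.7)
My plan is to give a coupling/thinning argument using the explicit construction in Definition~\ref{def:general construction}. The intuition is that slowing time by a factor $\lambda$ in $\text{IPS}(P)$ (where the clocks are Exp$(1)$) produces a process whose effective update rate at each site is $\lambda$; meanwhile, in $\text{IPS}(\lambda P + (1-\lambda)I)$ each rate-$1$ clock ring applies the mixture $\lambda P + (1-\lambda)I$, which by an auxiliary coin flip can be realized as "apply $P$ with probability $\lambda$, otherwise leave the configuration unchanged". Thinning the rate-$1$ Poisson process by keeping each point independently with probability $\lambda$ again yields a rate-$\lambda$ Poisson process of genuine $P$-updates. Thus both processes should be generated by the same law on the set of effective $P$-updates.

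First, I would verify the clock rescaling: if $\bb{\rho_n^j}$ are i.i.d.~Exp$(1)$, then $\bb{\rho_n^j/\lambda}$ are i.i.d.~Exp$(\lambda)$. So if $\bb{\zeta_t}_{t\geq 0}\in\text{IPS}(P)$ with update times $\tau_n^j$ at site $j$, then $\tilde\zeta_t := \zeta_{\lambda t}$ has updates at the times $\tilde\tau_n^j := \tau_n^j/\lambda$, which form a rate-$\lambda$ Poisson process independently at each site, and at each such time the transition matrix $P_j$ is applied.

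Second, I would construct a particular realization of $\text{IPS}(\lambda P + (1-\lambda)I)$: use rate-$1$ clocks giving update times $\hat\tau_n^j$, attach an independent family $U_n^j \sim \text{Unif}[0,1]$, and at the $n$-th ring at site $j$ perform a $P_j(\,\cdot\,|\,\hat\zeta_{\hat\tau_n^{j-}})$-update when $U_n^j < \lambda$ and do nothing when $U_n^j \geq \lambda$. Because $P_j(\cdot|\sigma)$ and the identity are exactly the two mixture components of $\lambda P_j(\cdot|\sigma) + (1-\lambda)\delta_{\sigma(j)}$, this is a faithful construction of $\text{IPS}(\lambda P + (1-\lambda)I)$. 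By Poisson thinning, the set of "effective" updates $\cb{(\hat\tau_n^j, j)\,:\,U_n^j<\lambda}$ is a rate-$\lambda$ Poisson process at each site, independent across sites.

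Third, I would invoke the \hyperref[def:causal]{causality} assumption to argue that in both constructions the state $\tilde\zeta_t(j)$ (resp.~$\hat\zeta_t(j)$) is a.s.~a measurable function of the initial configuration, the effective update set inside $\mathbf{Cone}(t,j)$, and the i.i.d.~$P_j(\cdot|\sigma)$-samples attached to those effective updates. Since the effective update sets have the same joint law (rate-$\lambda$ Poisson, independent across sites) and the $P$-sampling at effective updates is identical in both settings, the finite-dimensional distributions of $(\tilde\zeta_t)_{t\geq 0}$ and $(\hat\zeta_t)_{t\geq 0}$ coincide, proving the "only if" direction; the "if" direction follows from the same argument applied in reverse, or by rescaling time by $1/\lambda$ and noting the construction is symmetric.

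The argument is almost bookkeeping once the thinning/rescaling is identified, so I expect no serious obstacle. The main technical point to handle carefully is that the "do-nothing" branch of the mixture really corresponds to identity on the whole configuration in the construction of Definition~\ref{def:general construction}, and that the a.s.~finiteness of updates in each cone (guaranteed by causality) lets one legitimately condition on the effective update set in both constructions.
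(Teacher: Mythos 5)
Your proposal is correct and follows essentially the same route as the paper's proof: realize the mixture $\lambda P + (1-\lambda)I$ via an auxiliary Bernoulli/uniform coin at each clock ring, discard the identity updates, and identify the surviving update times as a rate-$\lambda$ Poisson process (the paper verifies this thinning by an explicit geometric-sum-of-Gamma-tails computation where you cite Poisson thinning, but the content is the same). No gaps.
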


This lemma implies that the set of \hyperref[def:invariant measure]{invariant measures} and properties like \hyperref[def:ergodicity]{ergodicity} are preserved under scaling time. This allows to extend past results about these properties (see Section~\ref{sec:extensions} below). For example, the following statement is an immediate consequence of the Time-Scaling Lemma. \\

\begin{coro}
    \hyperref[def:ergodicity]{Ergodicity} and the set of \hyperref[def:invariant measure]{invariant measures} are the same for IPS$(P)$ and IPS$(\lambda P + \bb{1-\lambda}I)$. The IPS$(P)$ is exponentially ergodic with rate $\alpha$ if and only if IPS$(\lambda P + \bb{1-\lambda}I)$ is exponentially ergodic with rate $\lambda\alpha$.
\end{coro}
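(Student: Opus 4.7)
My plan is to reduce everything to a single distributional identity derived directly from the Time-Scaling Lemma. Namely, if $(\zeta_t)_{t\geq 0} \in \text{IPS}(P)$ with $\zeta_0 \sim \nu$, then $(\zeta_{\lambda t})_{t\geq 0} \in \text{IPS}(\lambda P + (1-\lambda)I)$, still with initial distribution $\nu$. Writing $Q_t$ for the semigroup of $\text{IPS}(\lambda P + (1-\lambda)I)$, this gives the key identity
$$\nu Q_t = \nu P_{\lambda t} \qquad \text{for all } t\geq 0 \text{ and all initial distributions } \nu.$$
From here, everything in the corollary follows by essentially unpacking the relevant definitions.

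For the first claim about invariant measures, I would substitute $t \mapsto t/\lambda$ into the identity above to see that $\mu Q_t = \mu$ for all $t \geq 0$ is equivalent to $\mu P_s = \mu$ for all $s \geq 0$, since $\lambda \in (0,1]$ means $s = \lambda t$ ranges over all of $[0,\infty)$. Hence the two processes have literally the same invariant measures. For ergodicity, if $\mu$ is an attractive measure for $\text{IPS}(P)$, then $\nu P_t \Rightarrow \mu$ for every $\nu$; applying this along the subsequence $\lambda t \to \infty$ and invoking the semigroup identity yields $\nu Q_t \Rightarrow \mu$, making $\mu$ attractive for $\text{IPS}(\lambda P + (1-\lambda)I)$. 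The converse direction is identical upon replacing $P$ by $\lambda P + (1-\lambda)I$ and time-rescaling by $1/\lambda$.

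For the exponential rate, I would take the definition of exponential ergodicity for $\text{IPS}(P)$ and apply it at time $\lambda t$ to obtain
$$\sup_\nu \bigl| \nu Q_t(A) - \mu(A)\bigr| = \sup_\nu \bigl| \nu P_{\lambda t}(A) - \mu(A)\bigr| \leq C_S \, e^{-\alpha \lambda t}$$
for every cylinder set $A$ with finite support $S$. This is exactly exponential ergodicity of $\text{IPS}(\lambda P + (1-\lambda)I)$ with rate $\lambda \alpha$. The reverse implication is the analogous computation with $s = \lambda t$ giving rate $\beta/\lambda$ when starting from rate $\beta$ on the scaled side.

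I do not anticipate any real obstacle: the corollary is essentially a reformulation of the Time-Scaling Lemma in terms of the standard ergodicity/invariance language, and the only mild subtlety is to verify that $\lambda \in (0,1]$ still lets $\lambda t$ exhaust $[0,\infty)$ as $t$ ranges over $[0,\infty)$, which is immediate. Everything else is a direct translation through the identity $\nu Q_t = \nu P_{\lambda t}$.
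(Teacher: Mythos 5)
Your proof is correct and is exactly the intended argument: the paper states this corollary without proof as an ``immediate consequence'' of the Time-Scaling Lemma, and the identity $\nu Q_t = \nu P_{\lambda t}$ you extract is precisely what makes it immediate. All three parts (invariant measures, attractivity, and the rate conversion $\alpha \leftrightarrow \lambda\alpha$) follow correctly from that identity as you describe.
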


\begin{proof}[Proof of Lemma~\ref{pro:time-scaling}]

    The idea is to split the points in the $\mathbf{Update}$ set of IPS$(\lambda P + \bb{1-\lambda}I)$ into two categories - those where the IPS is updated according to the transition matrix $P$ and those where it is updated by $I$. Updating by the identity matrix is equivalent to not updating at all, so after erasing those points from the $\mathbf{Update}$ set, the trajectory remains the same. However, now all remaining updates are governed by the transition matrix $P$ and the $\mathbf{Update}$ set has become more sparse, which corresponds to time slowing down.\\

    The proof consists of three steps. First, we assign a Bernoulli random variable to each point in $\mathbf{Update}$, which determines whether a particular update is governed by $P$ or $I$. Then we construct a new trajectory $\xi$ which skips the 'failed' updates and we confirm that it behaves as a trajectory of an IPS with transition matrix $P$. The last step is to show that the new sparser $\mathbf{Update}$ set has exponentially distributed increments between points.\\

Since the statement of the lemma concerns only the distribution of $\bb{\zeta_t}_{t\geq 0}$ let us construct this IPS in the following way:  First, we construct \hyperref[def:general construction]{trajectory}  $\xi_t$ of $\text{IPS}\bb{\lambda P + (1-\lambda))I}$.
Let $\bb{\rho_{n}^j}_{n\geq 0}^{j\in \Lambda}$ be i.i.d.~and distributed according to $\text{Exp}(1)$
and set \mbox{$\tau_{n}^j := \rho_1^j +\dots \rho_n^j$}.
Define additional i.i.d.~$\text{Bern}(\lambda)$ variables $\bb{V_n^j}_{n\geq 0}^{j\in \Lambda}$ and independent from them i.i.d.~$\text{Unif}[0,1]$ variables $\bb{U_n^j}_{n\geq 0}^{j\in \Lambda}$.
Now set:\smallskip

\begin{itemize}
\item The initial configuration $\xi_0 := \zeta_0$.
\item If $t\in \left[\tau_n^j, \tau_{n+1}^j\right)$ then $\xi_t(j) := \xi_{\tau_n^j}(j)$.
\item Via the recursion principle we define $$\xi_{\tau_{n+1}^j}(j) := V_{n+1}^j \cdot F_j\bb{\xi_{\tau_{n+1}^{j-}}\bb{\mathcal{N}_j}, U_{n+1}^j}+\bb{1- V_{n+1}^j}\cdot\xi_{\tau_{n}^j}(j),$$
where $F_j: \mathcal{A}^\Lambda\times \Cb{0,1}\rightarrow \mathcal{A}$ are such that
$$\pP\bb{F_j\bb{\sigma,U_n^j} = a} = P_j\bb{a\;|\;\sigma}$$
for any $j\in\Lambda$, $a\in \mathcal{A}$ and $\sigma\in \mathcal{A}^\Lambda$.
\end{itemize}
The random variables $V_n^j$ act as blockers - when a clock rings at site $j$ it will fail to update (or rather, be updated by $I$) with probability $1-\lambda$. By construction $$\bb{\xi_t}_{t\geq 0}\in \text{IPS}\bb{\lambda P + (1-\lambda)I}.$$
Now define stopping times $\bb{\pi_{n}^j}_{n\geq 0}^{j\in \Lambda}$ by $\pi_0^j := 0$ and
$$\pi_{n+1}^j := \min\cb{\tau_k^j>\pi_n^j\,:\, V_k^j = 1}.$$
Put simply $\bb{\pi_n^j}_{n\geq 0}$ skips the failed updates.
The \hyperref[def:general construction]{trajectory}  $\bb{\xi_t}_{t\geq 0}$ satisfies: \smallskip

\begin{itemize}
    \item The initial configuration is still $\xi_0 = \zeta_0$.
    \item If $t\in \left[\pi_n^j, \pi_{n+1}^j\right)$ then $\xi_t(j) = \xi_{\pi_n^j}(j)$. 
    \item The recursion principle $$\xi_{\pi_{n+1}^j}(j) = F_j\bb{\xi_{\pi_{n+1}^{j-}}\bb{\mathcal{N}_j}, U_{n+1}^j}.$$
\end{itemize}
Therefore the trajectory $\bb{\xi_t}_{t\geq 0}$ is being updated according to transition matrix $P$. The last thing to check is that the family $\bb{\pi_n^j}_{n\geq 0}$ forms the $\mathbf{Update}$ set for some exponential clock, i.e.~that the increments $\pi_{n+1}^j-\pi_{n}^j$ are independent and distributed exponentially with the same rate (c.f.~Subsection~\ref{subsec:space-time}).
Let $\mathcal{F}_t$ be the canonical $\sigma$-algebra of events up to time $t$. Direct calculation yields
\begin{align*}
\pP & \bb{\pi_{n+1}^j - \pi_{n}^j>t\,|\,\mathcal{F}_{\pi_{n}^j}} = \sum_{k=0}^\infty \ind \bb{{\pi_{n}^j = \tau_{k}^j}}\CE{\ind\bb{\pi_{n+1}^j - \pi_{n}^j>t}}{\mathcal{F}_{\pi_{n}^j}}   \\
& = \sum_{k=0}^\infty \ind\bb{{\pi_{n}^j = \tau_{k}^j}} 
\\
& \qquad \times \CE{\sum_{m=1}^\infty \prod_{l=1}^{m-1}\ind\bb{V_{k+l}^j=0}\cdot\ind\bb{V_{k+m}^j=1}\cdot \ind\bb{\rho_{k+1}^j+\dots+\rho_{k+m}^j>t}}{\mathcal{F}_{\pi_{n}^j}}.
\end{align*}

Variables in the expectation are independent from the $\sigma$-algebra on the event $\cb{\pi_{n}^j = \tau_{k}^j}$. Additionally, since $V's$ are i.i.d. $\text{Bern}(\lambda)$ and $\rho's$ are i.i.d. $\text{Exp}(1)$ variables then the above equals

\begin{align*}
    \sum_{k=0}^\infty & \ind\bb{{\pi_{n}^j = \tau_{k}^j}} \sum_{m=1}^\infty \lambda(1-\lambda)^{m-1}\pP\bb{\text{Gamma}(m,1)>t} \\
    & = \sum_{m=1}^\infty \lambda(1-\lambda)^{m-1}\pP\bb{\text{Gamma}(m,1)>t}\\
    & =\sum_{m=1}^\infty \lambda(1-\lambda)^{m-1}\int_{[t,\infty)}\frac{1}{(m-1)!}x^{m-1}e^{-x}dx \\
    &= \int_{[t,\infty)}\lambda e^{-x} \sum_{m=0}^\infty \frac{\bb{x(1-\lambda)}^m}{m!}dx \\
    & = \int_{[t,\infty)}\lambda e^{-\lambda x}dx = \pP\bb{\text{Exp}(\lambda)>t}.
\end{align*}

Since the result is a constant with respect to $\mathcal{F}_{\pi_{n}^j}$ then the increment $\pi_{n+1}^j - \pi_{n}^j$ is independent from that $\sigma$-algebra. In addition we have shown that its distribution is $\text{Exp}(\lambda)$. As argued before this implies that
$$\bb{\xi_{t}}_{t\geq 0} \in \text{IPS}\bb{P, \text{Exp}(\lambda)}.$$
Setting $\zeta_t = \xi_{\frac{\lambda}{ t}}$ yields the desired result.
\end{proof}

The following example shows that the Time-Scaling Lemma does not hold for PCAs.\\
\begin{ex}
    Consider $\Lambda=\Z^d, \mathcal{A} = \cb{-1,+1}$ and transition matrices $P,Q$
    $$P_j(+1\;|\;\sigma) = \ind\bb{\sigma(j)=-1} \qquad \mbox{and} \qquad Q_j(+1\;|\;\sigma) = \frac{1}{2}.$$
    The rule $P$ flips states deterministically while $Q$ chooses it uniformly at random. By the \hyperref[pro:time-scaling]{Time-Scaling Lemma}, IPS$(P)$ is just $\text{IPS}(Q)$ with time sped up by a factor $\lambda = \frac{1}{2}$. The same is not true for $\text{PCA}(P)$ and $\text{PCA}(Q)$.
    Notice that $\text{PCA}(P)$ has a periodic trajectory "all +1" followed by "all -1", while $\text{PCA}(Q)$ turns every initial distribution into the $\text{Bern}\bb{\frac{1}{2}}$ product measure after a single time-step.
\end{ex}

\bigskip 
\section{Extensions of previous results via the Time-Scaling Lemma}\label{sec:extensions}

The \hyperref[pro:time-scaling]{Time-Scaling Lemma} implies that properties of an IPS that are invariant under scaling time are the same for all IPS whose transition matrices lie on a line crossing the identity matrix. These properties include the invariance of a given measure and (exponential) \hyperref[def:ergodicity]{ergodicity}. Thus to prove that an IPS$(P)$ has any of these properties it suffices to find some other $\text{IPS}(Q)$, with the transition matrix of the form $Q = \lambda P +\bb{1-\lambda}I$, which has it. This allows to extend previous results that do not respect this invariance with little additional effort. The strategy for this is straightforward: If a theorem states that IPS in some subset of the parameter space are \hyperref[def:ergodicity]{ergodic} then we can extend this subset along lines with roots at identity and have \hyperref[def:ergodicity]{ergodicity} proven in this (hopefully) larger subset. Similarly, this strategy also allows to proof theorems under stronger assumptions than stated. \\

In Section~\ref{sec:monotone_IPS}, we consider monotone IPS. We introduce the concept of weak and strong monotonicity. We show that in order to deduce Gray's theorem (see Theorem~\ref{Gray} below) one could assume w.l.o.g.~strong monotonicity. This potentially simplifies the proof of the theorem, especially when attempting to deduce similar statements for larger alphabets. \\

In Section~\ref{sec:ergo_additive_cancellative}, we consider cancellative and additive IPS, and show how two theorems of Griffeath (see Theorem~\ref{pro:additive} and Theorem~\ref{pro:cancellative} from below) can be extended using the Time-Scaling Lemma. \bigskip

\subsection{General results about monotone IPS and Gray's theorem}\label{sec:monotone_IPS}

We start with reviewing a couple of results on \hyperref[def:monotonicity]{monotone} IPS. This is a well-studied type of IPS and much is known about their behavior. Examples include the voter model, contact process, and ferromagnetic stochastic Ising models. For a standard reference on mononte IPS with alphabets of size two we refer to~\cite{Ligett:05}. In our exposition, we generalize the concept of monotone IPS to finite alphabets. After reviewing some basic facts on monotone IPS we introduce the concept of weak monotonicity. Then, we discuss Gray's theorem~(see Theorem~\ref{Gray} below) on weakly-monotone IPS, which plays an important role in Section~\ref{sec:application}. We close this section by showing that for the proof of Gray's theorem one can assume w.l.o.g.~strong monotonicity.\\

\begin{nota}
    For a transition matrix $P$ with a linearly ordered alphabet $\bb{\mathcal{A},\geq}$ we will write
    \begin{align*}
        P\bb{a_+\;|\;\zeta} &:= \sum_{b\geq a}P\bb{b\;|\;\zeta},\\
        P\bb{a_-\;|\;\zeta} &:= 1 - P\bb{a_+\;|\;\zeta}.
    \end{align*}
\end{nota}

\begin{defi}[Domination]\label{domination}
    For two transition matrices $P,Q$ over the same lattice $\Lambda$ and a linearly ordered alphabet $\bb{\mathcal{A},\geq}$ we say that $P$ \emph{dominates} $Q$ if for all $a\in\mathcal{A}$ it holds that for all~$j \in \Lambda$
    $$ P_j\bb{a_+\,|\,\zeta}\geq Q_j\bb{a_+\,|\,\xi} \text{ whenever }\zeta\geq\xi.$$
    We also say that an IPS$(P)$ \emph{dominates} $\text{IPS}(Q)$ if $P$ \hyperref[domination]{dominates} $Q$.\\
\end{defi}

\begin{defi}[Monotonicity]\label{def:monotonicity}
     A transition matrix is \emph{monotone} if 
     it \hyperref[domination]{dominates} itself.
     We will also say that an IPS is \emph{monotone} if its transition matrix is.\\
\end{defi}

We now discuss a few fundamental results regarding \hyperref[def:monotonicity]{monotone} IPS that provide great aid in their analysis. Even though the results are standard, we opted to provide simple proofs as that the literature only discusses alphabets of size two. Informally, the first lemma states that if one IPS \hyperref[domination]{dominates} another then their evolutions preserve stochastic dominance between their trajectories.\\

\begin{lem}[Domination lemma]\label{pro:domination lemma}
    Let IPS$(P)$ \hyperref[domination]{dominate} $\text{IPS}(Q)$. Then for any pair of initial distributions that satisfy $\zeta_0\geq \xi_0$ their trajectories $\bb{\zeta_t}_{t\geq 0}$ and $\bb{\xi_t}_{t\geq 0}$ under IPS$(P)$ and $\text{IPS}(Q)$ respectively can be constructed in such a way that
    $\zeta_t\geq \xi_t$ for all $t\geq 0$, almost surely.\\
\end{lem}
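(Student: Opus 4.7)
The plan is to construct both trajectories $(\zeta_t)_{t\geq 0}$ and $(\xi_t)_{t\geq 0}$ on a single probability space via a coupling that shares the clocks and, at each update, shares a single uniform randomizer. Using the construction from Definition~\ref{def:general construction}, I would take the \emph{same} family of exponential variables $\bb{\rho_n^j}$ (hence the same $\mathbf{Update}$ set and the same $\tau_n^j$) for both processes, and introduce i.i.d.~$U_n^j\sim\text{Unif}[0,1]$, independent of the clocks, shared between the two trajectories.

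For the updates, I would use the quantile (inverse CDF) coupling adapted to the linear order on $\mathcal{A}$. Fix a site $j$ and a ``neighborhood configuration'' $\sigma\in\mathcal{A}^{\mathcal{N}_j}$, and define the quantile functions
\begin{align*}
G_j^P(u,\sigma) &:= \min\cb{a\in\mathcal{A}\,:\, P_j\bb{a_+\,|\,\sigma}\leq u},\\
G_j^Q(u,\sigma) &:= \min\cb{a\in\mathcal{A}\,:\, Q_j\bb{a_+\,|\,\sigma}\leq u},
\end{align*}
so that $G_j^P(U_n^j,\sigma)$ has law $P_j(\cdot\,|\,\sigma)$ and $G_j^Q(U_n^j,\sigma)$ has law $Q_j(\cdot\,|\,\sigma)$. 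At every update time $\tau_n^j$, set $\zeta_{\tau_n^j}(j) := G_j^P\bb{U_n^j,\zeta_{\tau_n^{j-}}(\mathcal{N}_j)}$ and $\xi_{\tau_n^j}(j) := G_j^Q\bb{U_n^j,\xi_{\tau_n^{j-}}(\mathcal{N}_j)}$. This recursion is well posed because the lattice-clock pair is \hyperref[def:causal]{causal}: each $\mathbf{Cone}(t,j)$ contains only finitely many update points almost surely, so the dependencies can be resolved in finitely many steps. The marginal laws are correct by the standard property of the quantile transform, giving $\bb{\zeta_t}\in\text{IPS}(P)$ and $\bb{\xi_t}\in\text{IPS}(Q)$.

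The key monotonicity step is then almost automatic. Order the finitely many updates inside a fixed cone chronologically and induct. Assume $\zeta_s\geq \xi_s$ for all $s<\tau_n^j$; in particular $\zeta_{\tau_n^{j-}}(\mathcal{N}_j)\geq \xi_{\tau_n^{j-}}(\mathcal{N}_j)$. By the \hyperref[domination]{domination} assumption, for every $a\in\mathcal{A}$ we have
$$P_j\bb{a_+\,|\,\zeta_{\tau_n^{j-}}(\mathcal{N}_j)}\geq Q_j\bb{a_+\,|\,\xi_{\tau_n^{j-}}(\mathcal{N}_j)},$$
which is precisely the statement that the tail CDF of $G_j^P(\cdot,\zeta)$ dominates that of $G_j^Q(\cdot,\xi)$ pointwise in $u$. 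Hence $G_j^P(U_n^j,\zeta_{\tau_n^{j-}}(\mathcal{N}_j))\geq G_j^Q(U_n^j,\xi_{\tau_n^{j-}}(\mathcal{N}_j))$, i.e.~$\zeta_{\tau_n^j}(j)\geq \xi_{\tau_n^j}(j)$, while at other sites the coordinates are unchanged and the ordering is trivially preserved. Union-bounding over the countable set of update points gives $\zeta_t\geq\xi_t$ for all $t\geq 0$ almost surely.

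The only real subtlety is the well-posedness of the recursive construction when the lattice is infinite: we must be sure that we can simultaneously realize both trajectories as functions of $(\rho_n^j, U_n^j,\zeta_0,\xi_0)$ without circular dependencies. This is exactly what causality buys us, and is already used in Definition~\ref{def:general construction}. Everything else (the quantile coupling preserving marginals, and the elementary observation that a pointwise dominating tail CDF yields a pointwise dominating quantile function) is routine.
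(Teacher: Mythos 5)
Your proposal is essentially the paper's own proof: both couple the two trajectories by sharing the clocks and the per-update uniform variables, and both realize each update through a monotone (inverse-CDF) function of the shared uniform, so that domination of the tail probabilities $P_j\bb{a_+\,|\,\zeta}\geq Q_j\bb{a_+\,|\,\xi}$ propagates the ordering through every update; the paper states this in one paragraph, you merely spell out the induction and well-posedness. The only slip is in your explicit quantile formula: with $F(a):=P_j\bb{a_+\,|\,\sigma}$ non-increasing in $a$, the variable $\min\cb{a:F(a)\leq u}$ does \emph{not} have law $P_j(\cdot\,|\,\sigma)$ (for $\mathcal{A}=\cb{0,1}$ it puts mass $1-p$ on the state $1$ instead of $p$); you want $\max\cb{a:F(a)>u}$, which satisfies $\pP\bb{\max\cb{a:F(a)>U}\geq a_0}=F(a_0)$ and for which your monotonicity argument goes through verbatim, so the error is cosmetic and does not affect the structure of the proof.
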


The proof consists out of a straight-forward coupling argument.
\begin{proof}[Proof of Domination Lemma]
    We show that the \hyperref[def:general construction]{construction} given in Section~\ref{sec:notation} satisfies this condition. First, we let $\zeta$ and $\xi$ share the clock and update random variables ($\tau$s and $U$s). Then wet set the update functions $\bb{f_j}_{j\in\Lambda},\bb{g_j}_{j\in\Lambda}$ to be the non-decreasing step functions on $[0,1]$ satisfying for all $\sigma\in \mathcal{A}^{\Lambda}$ and $a\in\mathcal{A}$
$$\pP \bb{f_j\bb{\sigma, U_n^j}=a} = P_j\bb{a\,|\,\sigma}\;\,$$
$$\pP \bb{g_j\bb{\sigma, U_n^j}=a} = Q_j\bb{a\,|\,\sigma}.$$
Then the $\zeta\geq \xi$ condition is preserved by each update.
\end{proof}

Armed with this lemma we now prove the first theorem about \hyperref[def:monotonicity]{monotone} IPS. It states that the distributions of trajectories of \hyperref[def:monotonicity]{monotone} IPS are themselves monotone. \\

\begin{nota}\mbox{}
\begin{itemize}
    \item When the lattice is known from the context then for any $a\in \mathcal{A}$ we will shorten the Dirac deltas
        $$\delta_a := \delta_{\cb{a}^{\Lambda}}.$$
    \item For a linearly-ordered alphabet, we will denote its minimal and maximal elements (if they exist) as $\ubar{a}$ and $\bar a$ respectively.
\end{itemize}  
\end{nota}
\mbox{}\\

\begin{theo}\label{pro:monoresult}
 If the IPS$(P)$ is \hyperref[def:monotonicity]{monotone} then the following holds:
 \begin{enumerate}
     \item For any increasing event $A$ (i.e.~one such that if $\zeta \geq \xi$ and $\xi \in A$ then $\zeta \in A$) the function
        $$t \mapsto \delta_a P_t\bb{A}$$
        is non-increasing for $a=\bar{a}$ and non-decreasing for $a = \ubar{a}$ (provided these exist).
    \item  The weak limits of $\bb{\delta_a P_t}_{t\geq 0}$ for $a\in\cb{\ubar{a},\bar{a}}$ exist. These limits will  be denoted as $\mu_{\ubar{a}}$ and $\mu_{\bar{a}}$ respectively.

    \item Any \hyperref[def:invariant measure]{invariant measure} stochastically dominates $\mu_{\ubar{a}}$ and is stochastically dominated by $\mu_{\bar{a}}$.
    
    \item The measures $\mu_{\ubar{a}}$ and $\mu_{\bar{a}}$ coincide if and only if IPS$(P)$ is \hyperref[def:ergodicity]{ergodic}.
 \end{enumerate}
\end{theo}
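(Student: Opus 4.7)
The plan is to leverage the \hyperref[pro:domination lemma]{Domination Lemma} applied to the IPS$(P)$ against itself. For part (1), I would observe that $\delta_{\bar a} P_s \leq \delta_{\bar a}$ stochastically, simply because $\delta_{\bar a}$ is the law of the top configuration and so dominates any probability measure on $\mathcal{A}^\Lambda$. Applying $P_t$ to both sides and using monotonicity (Lemma~\ref{pro:domination lemma}) gives $\delta_{\bar a} P_{t+s} \leq \delta_{\bar a} P_t$ stochastically, so evaluating on an increasing event yields $\delta_{\bar a} P_{t+s}(A) \leq \delta_{\bar a} P_t(A)$; monotonicity in $t$ for $\delta_{\ubar a}$ is symmetric.

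For part (2), part (1) shows that $\delta_{\bar a} P_t(A)$ is bounded and monotone in $t$, hence convergent, for every increasing cylinder event $A$. To upgrade this to weak convergence of the measures, I would use a monotone class argument: every cylinder set on a finite support $S \subseteq \Lambda$ can be written as a finite Boolean combination of ``half-space'' events $\{\zeta : \zeta(j) \geq a\}$, each of which is increasing. Inclusion–exclusion then produces a limit of $\delta_{\bar a} P_t(B)$ for every cylinder $B$, and, by Kolmogorov extension, a limit measure $\mu_{\bar a}$ on $\mathcal{A}^\Lambda$ to which $\delta_{\bar a} P_t$ converges weakly. The treatment of $\mu_{\ubar a}$ is identical.

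For part (3), let $\mu$ be any invariant measure. Stochastically $\mu \leq \delta_{\bar a}$, so by Lemma~\ref{pro:domination lemma} applied to IPS$(P)$ with itself one obtains $\mu P_t \leq \delta_{\bar a} P_t$. Invariance of $\mu$ rewrites the left-hand side as $\mu$, and passing to the limit in $t$ using part (2) yields $\mu \leq \mu_{\bar a}$. The inequality $\mu_{\ubar a} \leq \mu$ is analogous.

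For part (4), the nontrivial direction uses a sandwich. For any initial distribution $\nu$ we have $\delta_{\ubar a} \leq \nu \leq \delta_{\bar a}$ stochastically, and monotonicity of the dynamics gives $\delta_{\ubar a} P_t \leq \nu P_t \leq \delta_{\bar a} P_t$ for all $t \geq 0$. If $\mu_{\ubar a} = \mu_{\bar a} =: \mu$, both bounds converge weakly to $\mu$, so $\nu P_t \Rightarrow \mu$ and the IPS is ergodic. Conversely, if the IPS is ergodic with attractive measure $\mu$, then both $\delta_{\ubar a} P_t$ and $\delta_{\bar a} P_t$ converge weakly to $\mu$, forcing $\mu_{\ubar a} = \mu_{\bar a} = \mu$. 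The only subtle step is the monotone-class argument in part (2) to pass from increasing events to weak convergence; everything else is a routine consequence of the Domination Lemma.
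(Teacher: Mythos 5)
Your proposal is correct and follows essentially the same route as the paper: each part is reduced to the Domination Lemma applied to IPS$(P)$ against itself, with part (1) via the coupling of $\delta_{\bar a}$ with $\delta_{\bar a}P_s$, part (2) via monotone convergence on increasing cylinder events plus inclusion–exclusion, and parts (3)–(4) via the sandwich $\delta_{\ubar{a}}P_t \leq \mu P_t \leq \delta_{\bar a}P_t$. No substantive differences from the paper's argument.
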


In particular, this result implies that for \hyperref[def:monotonicity]{monotone} IPS uniqueness of the \hyperref[def:invariant measure]{invariant measure} is equivalent to \hyperref[def:ergodicity]{ergodicity}.

\begin{proof}[Proof of Theorem~\ref{pro:monoresult}]
Argument for~(1): Without loss of generality we set $a=\bar{a}$.
Let us pick any $t\geq s\geq 0$. Let $\zeta_0 = a$ everywhere and $\xi_0$ be distributed according to $\delta_a P_{t-s}$. We have $\zeta_0 \geq \xi_0$ so by monotonicity of $P$ and Lemma~\ref{pro:domination lemma} we can construct their respective trajectories under $P$ in such a way that $\zeta_u \geq \xi_u$ for all $u\geq 0$. In particular $\zeta_s \geq \xi_s$. This in turn implies
$$\delta_a P_s\bb{A} = \pP\bb{\zeta_s \in A} \geq \pP\bb{\xi_s \in A} = \delta_{a} P_t\bb{A}.$$\smallskip

Argument for~(2): The inclusion-exclusion principle implies that any measure $\mu$ on $\mathcal{A}^G$ is uniquely determined by its values of
$$\mu\bb{\sigma \geq C \text{ on } K}$$
for all possible cylinders $C$ and their supports $K$. This becomes clear as the measure~$\mu(\hat C)$ of any cylinder~$\hat C$ with finite support is a finite linear combination of terms of this form.
The sequence $$\bb{\delta_a P_t\bb{\sigma \geq C\text{ on }K}}_{t\geq 0}$$ is monotone by $(1)$ because these are increasing events. It is also bounded so it has a limit. Thus the limit of $\delta_a P_t(\hat C)$ exists for any $\hat C$, which is exactly convergence in distribution.\medskip

Argument for~(3): Let $\mu$ be an invariant measure of IPS$(P)$. By symmetry, it is sufficient to prove that $\mu$ stochastically dominates $\mu_{\ubar{a}}$. Let $\zeta_0$ be distributed according to $\mu$, $\xi_0=\ubar{a}$ everywhere, and the processes $\bb{\zeta_t}_{t\geq 0}$, $\bb{\xi}_{t\geq 0}$ be coupled such that $\zeta_t \geq \xi_t$ almost surely. Then for any increasing event $A$, we have
$$\mu(A) = \mu P_t(A) = \pP\bb{\zeta_t\in A} \geq \pP\bb{\xi_t\in A}=\delta_{\ubar{a}}P_t(A)\xrightarrow{t\rightarrow\infty}\mu_{\ubar{a}}(A).$$

Argument for~(4): The Lemma~\ref{pro:domination lemma} implies that for any measure $\mu$ and all~$t \geq 0$
$$\delta_{\bar{a}} P_t \bb{\sigma \geq C \text{ on }K} \geq \mu P_t\bb{\sigma \geq C \text{ on }K}\geq \delta_{\ubar{a}} P_t\bb{\sigma \geq C \text{ on }K}.$$
As a consequence 
$$\mu_{\bar{a}} \bb{\sigma \geq C \text{ on }K} \geq \lim_{t\rightarrow\infty}\mu P_t\bb{\sigma \geq C \text{ on }K}\geq \mu_{\ubar{a}}\bb{\sigma \geq C \text{ on }K}.$$
If $\mu_{\ubar{a}} = \mu_{\bar{a}}$ then the limits of $\mu P_t\bb{C}$ exist and agree with them. This implies that the limiting measure of $\mu P_t$ is $\mu_a = \mu_b$. 
\end{proof}

 Let us introduce the concept of weak domination and weak monotonicity.\\

\begin{defi}[Weak domination]\label{def:weak domination}
    For two transition matrices $P,Q$ over the same lattice $\Lambda$ and a linearly ordered alphabet $\bb{\mathcal{A},\geq}$ we will say that $P$ \emph{weakly dominates} $Q$ if for all $a\in\mathcal{A}$ it holds that
    $$ P_j\bb{a_+\,|\,\zeta}\geq Q_j\bb{a_+\,|\,\xi}, \text{ whenever }\left\{\begin{array}{l}
\zeta \geq \xi, \text{ and }\\
\zeta(j), \xi(j)\geq a \text{ or }\zeta(j),\xi(j)<a.
\end{array}\right.$$
\end{defi}
This is a strictly weaker condition than (strong) \hyperref[domination]{domination} -  if $\zeta(j)>\xi(j)$ then for every $a\in\mathcal{A}$ such that $\zeta(j)\geq a>\xi(j)$ the inequality $P_j\bb{a_+\,|\,\zeta}\geq Q_j\bb{a_+\,|\,\xi}$ is no longer required.\\

\begin{defi}[Weak monotonicity]\label{def:weak monotonicity}
     A transition matrix is \emph{weakly monotone} if 
     it \hyperref[def:weak domination]{dominates itself weakly}.\\
\end{defi}

\hyperref[def:weak monotonicity]{Weak monotonicity} is of course a weaker condition than \hyperref[def:monotonicity]{monotonicity}. For example in the case of nearest neighbor IPS on $\Z$ with an alphabet of size two \hyperref[def:weak monotonicity]{weak monotonicity} requires $8$ independent inequalities to be satisfied, while \hyperref[def:monotonicity]{monotonicity} requires $12$.  In the next lemma, we verify that \hyperref[def:weak monotonicity]{weak monotonicity} is the correct extension of monotonicity.\\

\begin{lem}\label{pro:weak lemma}
    If IPS$(P)$ is \hyperref[def:weak monotonicity]{weakly monotone} then for $\lambda\leq\frac{1}{2}$ the $\text{IPS}\bb{\lambda P+\bb{1-\lambda}I}$ is \hyperref[def:monotonicity]{monotone}.
\end{lem}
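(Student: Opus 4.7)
The plan is a direct case analysis, checking the definition of \hyperref[def:monotonicity]{monotonicity} for the transition matrix $Q := \lambda P + (1-\lambda) I$ using the inequalities guaranteed by \hyperref[def:weak monotonicity]{weak monotonicity} of $P$ together with the buffer provided by the identity part. Fix $j \in \Lambda$, a threshold $a \in \mathcal{A}$, and configurations $\zeta \geq \xi$. Observe that the identity matrix satisfies $I_j(a_+ \,|\, \eta) = \ind\bb{\eta(j) \geq a}$, so
$$Q_j\bb{a_+ \,|\, \eta} = \lambda P_j\bb{a_+ \,|\, \eta} + (1-\lambda)\cdot \ind\bb{\eta(j) \geq a}.$$

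The plan is to split into three exhaustive cases depending on how $\zeta(j)$ and $\xi(j)$ sit relative to $a$. Note that since $\zeta \geq \xi$, the case $\xi(j) \geq a > \zeta(j)$ is impossible, so only three cases remain. In the two \textbf{same-side cases} (both $\zeta(j), \xi(j) \geq a$ or both $\zeta(j), \xi(j) < a$), the indicator contributions from $I$ cancel, so
$$Q_j\bb{a_+ \,|\, \zeta} - Q_j\bb{a_+ \,|\, \xi} = \lambda\bb{P_j\bb{a_+ \,|\, \zeta} - P_j\bb{a_+ \,|\, \xi}} \geq 0,$$
where the inequality is exactly \hyperref[def:weak monotonicity]{weak monotonicity} of $P$. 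In the remaining \textbf{crossing case} $\zeta(j) \geq a > \xi(j)$, weak monotonicity gives no information on the $P$-difference, but the identity contributes $(1-\lambda)$ to the left side and $0$ to the right side, so
$$Q_j\bb{a_+ \,|\, \zeta} - Q_j\bb{a_+ \,|\, \xi} = \lambda\bb{P_j\bb{a_+ \,|\, \zeta} - P_j\bb{a_+ \,|\, \xi}} + (1-\lambda) \geq -\lambda + (1-\lambda) = 1 - 2\lambda \geq 0,$$
where the last inequality uses the hypothesis $\lambda \leq \tfrac{1}{2}$, and the middle inequality uses that $P_j\bb{a_+ \,|\, \zeta} \geq 0$ and $P_j\bb{a_+ \,|\, \xi} \leq 1$.

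Since the three cases exhaust all pairs $(\zeta,\xi)$ with $\zeta \geq \xi$, the matrix $Q$ dominates itself in the sense of Definition~\ref{domination}, i.e.~IPS$(\lambda P + (1-\lambda)I)$ is \hyperref[def:monotonicity]{monotone}. I do not expect a real obstacle here: the only non-trivial step is the crossing case, and the bound $\lambda \leq \tfrac{1}{2}$ is tight precisely because the worst-case $P$-difference one needs to absorb is $-\lambda$, and the identity compensates by exactly $(1-\lambda)$.
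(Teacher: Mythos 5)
Your proposal is correct and follows essentially the same route as the paper's proof: decompose $Q_j(a_+\,|\,\cdot)$ into the $\lambda P$ part and the indicator from the identity, then split into the same-side cases (where weak monotonicity applies directly) and the crossing case (where the identity's contribution $1-\lambda$ absorbs the worst-case loss $-\lambda$, requiring $\lambda\leq\tfrac12$). You are in fact slightly more explicit than the paper in the crossing case, which the paper dismisses as ``satisfied regardless of $P_j$.''
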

\begin{proof}[Proof of Lemma~\ref{pro:weak lemma}]
    
For any $a\in\mathcal{A}$
$$\bb{\lambda P +\bb{1-\lambda}I}_j\bb{a_+\;|\;\zeta} =  \lambda P_j\bb{a_+\;|\;\xi} + \bb{1-\lambda}\ind\bb{\zeta(j)\geq a}.$$
For any $\zeta \geq \xi$ there are two cases to consider:
\begin{itemize}
    \item If $\zeta(j)\geq a >\xi(j)$ then for $\lambda\leq\frac{1}{2}$ the condition 
    $${\lambda P_j \bb{a_+\;|\;\zeta}+\bb{1-\lambda}I_j}\bb{a_+\;|\;\zeta} \geq \lambda P_j\bb{a_+\;|\;\xi} +\bb{1-\lambda}I_j\bb{a_+\;|\;\xi}$$
    is satisfied regardless of $P_j$.
    \item If on the other hand $\zeta(j)$ and $\xi(j)$ are both greater or both less than $a$ then the condition
    $$\bb{\lambda P +\bb{1-\lambda}I}_j\bb{a_+\;|\;\zeta} \geq\bb{\lambda P +\bb{1-\lambda}I}_j\bb{a_+\;|\;\xi}$$
    is equivalent to
    $$P_j\bb{a_+\;|\;\zeta} \geq P_j\bb{a_+\;|\;\xi}.$$
\end{itemize}
Thus if IPS$(P)$ is \hyperref[def:weak monotonicity]{weakly monotone} then for $\lambda \leq \frac{1}{2}$ the $\text{IPS}(\lambda P +\bb{1-\lambda}I)$ is monotone.
\end{proof}

We now apply the \hyperref[pro:time-scaling]{Time-Scaling Lemma} to extend the results of Theorem~\ref{pro:monoresult} to weakly monotone IPS. The main idea is that all the properties proven in Theorem~\ref{pro:monoresult} are invariant under scaling time. Therefore one can use 
the \hyperref[pro:time-scaling]{Time-Scaling Lemma} to prove that monotonicity and weak monotonicity are equivalent when studying \hyperref[def:ergodicity]{ergodic} properties. \\

\begin{theo}[Extension of results for monotone IPS]\label{pro:monoextend}
    \hyperref[def:weak monotonicity]{Weak monotonicity} is sufficient for the results of Theorem~\ref{pro:monoresult}.
\end{theo}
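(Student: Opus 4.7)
The plan is to use Lemma~\ref{pro:weak lemma} to reduce the weakly monotone case to the strongly monotone one by interpolating with the identity, then invoke the already-proved Theorem~\ref{pro:monoresult}, and finally transfer each of its four conclusions back to IPS$(P)$ via the Time-Scaling Lemma.

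First I would set $\lambda = \tfrac{1}{2}$ and define $Q := \tfrac{1}{2} P + \tfrac{1}{2} I$. By Lemma~\ref{pro:weak lemma}, weak monotonicity of $P$ implies that $Q$ is (strongly) monotone, so Theorem~\ref{pro:monoresult} applies to IPS$(Q)$, yielding monotonicity of $t \mapsto \delta_a Q_t(A)$ on increasing events, weak limits $\nu_{\underline{a}}$ and $\nu_{\bar{a}}$ of $\delta_{\underline{a}} Q_t$ and $\delta_{\bar{a}} Q_t$, stochastic sandwiching of every invariant measure between these two, and equivalence of $\nu_{\underline{a}} = \nu_{\bar{a}}$ with ergodicity.

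Next I would invoke Lemma~\ref{pro:time-scaling}, which gives the identity $\delta_a Q_t = \delta_a P_{t/2}$ for every $t \geq 0$, together with its Corollary identifying the invariant measures and ergodic properties of IPS$(P)$ and IPS$(Q)$. For conclusion~(1), monotonicity of $t \mapsto \delta_a Q_t(A) = \delta_a P_{t/2}(A)$ is preserved by the positive affine reparametrization $s = t/2$, so $s \mapsto \delta_a P_s(A)$ is monotone with the same direction. For~(2), weak convergence of $\delta_a Q_t$ as $t \to \infty$ is equivalent to weak convergence of $\delta_a P_s$ as $s \to \infty$, and the limits coincide, so one may set $\mu_{\underline{a}} := \nu_{\underline{a}}$ and $\mu_{\bar{a}} := \nu_{\bar{a}}$. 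For~(3) and~(4), the Corollary of Lemma~\ref{pro:time-scaling} gives equality of the sets of invariant measures and equivalence of ergodicity between IPS$(P)$ and IPS$(Q)$, so the stochastic sandwiching and the uniqueness-versus-ergodicity equivalence transfer verbatim.

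There is essentially no real obstacle: every conclusion of Theorem~\ref{pro:monoresult} is phrased in terms of objects (monotonicity of a scalar function of $t$, weak limits as $t \to \infty$, the set of invariant measures, ergodicity) that are invariant under a linear reparametrization of time. The only minor point worth noting is that $\lambda = \tfrac{1}{2}$ is the natural choice since Lemma~\ref{pro:weak lemma} requires $\lambda \leq \tfrac{1}{2}$; any smaller $\lambda \in (0,\tfrac{1}{2}]$ would work equally well, the resulting trajectories being related by the substitution $s = \lambda t$.
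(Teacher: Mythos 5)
Your proposal is correct and follows exactly the paper's argument: apply Lemma~\ref{pro:weak lemma} to obtain monotonicity of $\lambda P + (1-\lambda)I$ for $\lambda \le \tfrac{1}{2}$, then transfer the conclusions of Theorem~\ref{pro:monoresult} back via the Time-Scaling Lemma. The paper states this in two sentences; your version merely spells out the (routine) verification that each of the four conclusions is invariant under the linear time reparametrization.
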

\begin{proof}[Proof of Theorem~\ref{pro:monoextend}]
    By the \hyperref[pro:time-scaling]{Time-Scaling Lemma}, it suffices to show that there exists a $\lambda>0$ such that $\lambda P +\bb{1-\lambda}I$ is \hyperref[def:monotonicity]{monotone}. By Lemma $\ref{pro:weak lemma}$, any $\lambda$ between $0$ and $\frac{1}{2}$ works.
\end{proof}

Before turning to Gray's criterion of ergodicity of monotone IPS, let us introduce the following notion. \\

\begin{defi}[Positive rates]\label{positive rates}
     A transition matrix $P$ has \emph{positive rates} if it differs from the identity on every entry.\\
\end{defi}

The name comes from the original construction of interacting particle systems with alphabet $\cb{0,1}$, where instead of a transition matrix determining updates each site had a clock with an exponential rate that varied depending on the site's neighborhood. Whenever this clock rang the site flipped its state between $\cb{0,1}$. Positive rates, in this case, mean that flipping the state is possible in any neighborhood.\\

\begin{theo}[Larry Gray, Theorem $1$, page 397, in~\cite{Gray:82}]\label{Gray} 
Every one-dimensional, periodic, weakly monotone IPS with nearest-neighbor interactions, \hyperref[positive rates]{positive rates} and alphabet of size two is \hyperref[def:ergodicity]{ergodic}.\\
\end{theo}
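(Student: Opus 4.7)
My plan consists of two parts: a soft reduction to the strongly monotone case using tools already developed in the paper, followed by the deep one-dimensional argument originally due to Gray.

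For the first part, I would apply Lemma~\ref{pro:weak lemma} with any $\lambda \in (0, \tfrac{1}{2}]$: the transition matrix $Q := \lambda P + (1-\lambda)I$ is then strongly monotone, and it inherits from $P$ the nearest-neighbor range, the periodicity, and positive rates (convex combination with $I$ by a strictly positive $\lambda$ never equalizes an entry that was already distinct from $I$). The Time-Scaling Lemma (Lemma~\ref{pro:time-scaling}) then tells us that IPS$(P)$ and IPS$(Q)$ are ergodic together, so the problem collapses to the strongly monotone case.

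For the second part, Theorem~\ref{pro:monoresult}(4) reduces ergodicity to the identity $\mu_{\ubar{a}} = \mu_{\bar{a}}$, i.e.~the equality of the limiting measures from the all-zeros and all-ones configurations. I would prove this via the coupling supplied by Lemma~\ref{pro:domination lemma}: run $\bb{\zeta_t}_{t\geq 0}$ from all zeros and $\bb{\xi_t}_{t\geq 0}$ from all ones on a common set of Poisson clocks and uniform randomizers so that $\zeta_t \leq \xi_t$ for every $t$. The discrepancy set $D_t := \cb{j \in \Z : \zeta_t(j) < \xi_t(j)}$ is a disjoint union of maximal intervals (``bubbles''), each bounded by two ``walls''. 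The monotone coupling forbids the spontaneous creation of walls strictly inside an agreement region, so bubbles can only translate, merge, or shrink. The target is $\pP(0 \in D_t) \to 0$; by the inclusion-exclusion mechanism used in the proof of Theorem~\ref{pro:monoresult}(2) this upgrades to the equality $\mu_{\ubar{a}} = \mu_{\bar{a}}$ on arbitrary cylinder events and hence ergodicity.

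The main obstacle, and the step that requires the full force of Gray's argument in~\cite{Gray:82}, is a quantitative bubble-collapse estimate. The natural tool is a renormalized block construction: partition space-time into rectangles of large width $L$ (chosen as a multiple of the period) and duration $T$, and call such a block \emph{good} if every clock in it rings and each update selects the zero outcome. Positive rates give a uniform lower bound on the probability of this event, independent of any conditioning on boundary data, and strong monotonicity ensures that any bubble overlapping a good block is erased inside it. The nearest-neighbor range prevents walls from leaping past good blocks, so a comparison with a supercritical oriented percolation shows that almost every site is captured by a good block in finite time, forcing $\pP(0 \in D_t) \to 0$. Executing this construction rigorously, with the proper control of wall motion between good blocks, is the technically delicate part; the argument truly relies on the one-dimensional geometry, as shown by the low-temperature Ising model in two dimensions, which is monotone and has positive rates yet is not ergodic.
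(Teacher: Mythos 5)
Your first step --- passing to $Q=\lambda P+(1-\lambda)I$ with $\lambda\le\tfrac12$ via Lemma~\ref{pro:weak lemma} and invoking the Time-Scaling Lemma to reduce the weakly monotone case to the strongly monotone one --- is correct and is precisely the observation the paper itself makes immediately after stating Theorem~\ref{Gray}. Note, however, that the paper does not prove this theorem at all: it is imported verbatim from Gray's article, and the reduction to strong monotonicity is offered only as a remark that would simplify Gray's proof, not as a substitute for it. So the substance of the theorem lives entirely in your second part, and that is where the proposal breaks down.

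The block-renormalization mechanism you describe cannot work as stated. A block of width $L$ and duration $T$ contains on the order of $LT$ clock rings, and the probability that \emph{every} update in it selects the zero outcome is at most $(\sup_\sigma P(0\,|\,\sigma))^{cLT}$, which tends to $0$ as the blocks grow (and positive rates do not even guarantee $P(0\,|\,\sigma)>0$ when $\sigma(j)=0$, only when $\sigma(j)=1$). A comparison with supercritical oriented percolation requires the good-block density to exceed a fixed threshold, so your good blocks are far too rare; this is exactly the reason merely positive rates is a hard hypothesis and why Gray's proof instead runs a delicate one-dimensional analysis of the left and right edges of the discrepancy interval in the monotone coupling (controlling their drift and fluctuations), rather than a crude ``all updates go to zero'' event. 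Your bubble/wall picture and the use of Theorem~\ref{pro:monoresult}(4) to reduce ergodicity to $\mu_{\ubar{a}}=\mu_{\bar{a}}$ are the right framing, but the quantitative collapse estimate --- the actual content of the theorem --- is not supplied by the sketch, and the specific mechanism proposed for it is quantitatively unsound. As the statement is a cited external result, the honest options are either to cite \cite{Gray:82} for the strongly monotone case (after your correct reduction) or to reproduce Gray's edge-process argument, which your proposal does not do.
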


We observe that the \hyperref[positive rates]{positive rates} property is preserved by convex combinations with identity. Therefore in the proof of Theorem~\ref{Gray} one could assume w.l.o.g.~that the underlying IPS is monotone and not just weakly monotone.

\bigskip
\subsection{Ergodicity of additive and cancellative IPS}\label{sec:ergo_additive_cancellative}

In this section, we extend two well-known theorems on exponential ergodicity using the Time-Scaling Lemma. The two theorems are taken from Griffeath's lecture notes on "Additive and Cancellative Interacting Particle Systems"\cite{Griffeath:78}.\\

There are many ways to construct a \hyperref[def:general construction]{trajectory}  of an IPS and the construction in Griffeath's work differs greatly from our own. As a result, it's not immediately obvious what a transition matrix of an additive or cancellative IPS looks like. For the convenience of the reader, we directly give the correct definitions for our setting without a derivation. For details of the derivation we refer to Section~\ref{sec:char_additive_IPS} and~\ref{sec:char_cancellative_IPS} in the Appendix.\\

\begin{defi}[Additive IPS]\label{def:additive_IPS}
    An IPS is called \emph{additive}, if its transition matrix $P$ can be expressed as a convex combination of the $\mathbf{1}$ matrix and any matrices of the form
    \begin{align*}
    P_{S}\bb{1\,|\,\sigma} = 
    \begin{cases}
        1, & \mbox{if }S \cap \sigma^{-1}\bb{1}\neq \vn, \\
         0, &  \mbox{if }S\cap \sigma^{-1}\bb{1}= \vn,
    \end{cases}
\end{align*}
for any $S\subseteq \mathcal{N}$. \\
\end{defi}

Whenever there is an update at site $j$, an additive IPS either flips the state to one, or selects a subset~$S \subset \mathcal{N}$ of its interaction neighborhood~$\mathcal{N}$ via the convex combination. Then, the present state becomes~$1$, if there is a~$1$ present in the set~$j+S$.\\

\begin{theo}[Ergodicity of additive IPS, cf.~Corollary 2.5, page 20 in~\cite{Griffeath:78}]\label{pro:additive} 
    We consider an additive IPS that is homogeneous over $\Z^d$ with positive rates and alphabet $\cb{0,1}$. Then this IPS is exponentially \hyperref[def:ergodicity]{ergodic}. The rate of \hyperref[def:ergodicity]{ergodicity} is at least as large as the coefficient of the $\mathbf{1}$ matrix in the above convex combination.\\
\end{theo}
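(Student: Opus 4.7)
The plan is to run a coupling-and-dual argument based on the characterization in Definition~\ref{def:additive_IPS}. I will fix a decomposition
$$P = \alpha \mathbf{1} + \sum_{S \subseteq \mathcal{N}} \beta_S P_S, \qquad \alpha + \sum_{S} \beta_S = 1,$$
and construct IPS$(P)$ from an arbitrary initial configuration $\xi$ by sampling, independently at each point $(\tau_n^j, j) \in \mathbf{Update}$, a label $L_n^j \in \cb{\mathbf{1}} \cup \cb{S : S \subseteq \mathcal{N}}$ with $\pP\bb{L_n^j = \mathbf{1}} = \alpha$ and $\pP\bb{L_n^j = S} = \beta_S$. Given the label, the update is deterministic: set $\zeta_{\tau_n^j}(j)$ to $1$ if $L_n^j = \mathbf{1}$, and otherwise to the OR of $\zeta_{\tau_n^{j-}}$ over $j + L_n^j$. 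Sharing the clocks \emph{and} the labels between two copies $\zeta^\xi, \zeta^{\xi'}$ started from arbitrary $\xi, \xi'$ then furnishes the coupling I will use.

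Next I introduce the backward dual at a fixed space-time point $(j, t)$. I define a set-valued process $(A_u)_{u \in [0, t]}$ with $A_0 = \cb{j}$ by processing the updates inside $\mathbf{Cone}(t, j)$ in reverse order of time: at a backward-encountered event $(\tau, k)$ with $k \in A_{t - \tau^-}$, send the dual to a cemetery ``absorbed'' if $L_n^k = \mathbf{1}$, and otherwise replace $k$ by $k + S$ (so $S = \vn$ simply removes $k$). Because $\mathbf{Cone}(t, j)$ contains a.s.~finitely many updates by causality of $\bb{\Z^d, \text{Exp}(1)}$, a short induction confirms the additivity identity
$$\zeta_t^\xi(j) \;=\; \ind\cb{\text{absorbed by time } t} \;\vee\; \max_{l \in A_t} \xi(l),$$
with the convention $\max_\vn = 0$. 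In particular, on the event $\cb{T \leq t}$, where $T$ is the first time the dual is absorbed or empty, $\zeta_t^\xi(j)$ does not depend on $\xi$, so the coupled trajectories agree at $(j, t)$.

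The heart of the argument is a tail bound on $T$. While $A_u$ is alive, each of its sites contributes absorption hazard $\alpha$, so the total hazard is at least $\alpha \abs{A_u} \geq \alpha$ throughout $[0, t]$. A standard exponential-martingale computation applied to the counting process of $\mathbf{1}$-absorptions then gives
$$\pP\bb{T > t} \;\leq\; e^{-\alpha t}.$$
A union bound over a finite support $K \subseteq \Z^d$ yields $\sup_{\xi, \xi'} \pP\bb{\exists\, j \in K \colon \zeta_t^\xi(j) \neq \zeta_t^{\xi'}(j)} \leq \abs{K} e^{-\alpha t}$, which applied to indicators of cylinder sets supported on $K$ gives the estimate required by Definition~\ref{def:ergodicity} at rate $\alpha$. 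Lemma~\ref{pro:ergodicity_equiv_covdec} then promotes this to exponential covariance decay at the same rate.

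The main obstacle is bookkeeping rather than substance. I will need to verify the additivity identity carefully (showing that the forward OR-updates translate cleanly into the backward set-valued replacement rule), check that the $\mathbf{1}$-coefficient $\alpha$ may be chosen strictly positive (which follows from applying positive rates at the configuration $\mathbf{0}$: $P_j\bb{1 \,|\, \mathbf{0}} = \alpha > 0$), and, for the sharpest bound, select $\alpha$ maximal over all valid decompositions of $P$. I expect no genuine probabilistic difficulty beyond these structural checks.
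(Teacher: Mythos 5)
Your proposal is correct, but note that the paper does not actually prove Theorem~\ref{pro:additive}: it is imported as Corollary~2.5 of Griffeath's lecture notes, and the paper's appendix (Section~\ref{sec:char_additive_IPS}) only translates Griffeath's percolation-substructure construction into the transition-matrix language used here. What you have written is a self-contained reconstruction of the classical additive-duality argument that underlies the cited result: the label-sharing coupling, the coalescing set-valued backward dual with an absorbing state for $\mathbf{1}$-events, the identity $\zeta_t^{\xi}(j)=\ind\{\text{absorbed}\}\vee\max_{l\in A_t}\xi(l)$, and the hazard bound $\pP(T>t)\le e^{-\alpha t}$ via $\alpha\abs{A_u}\ge\alpha$ are all sound, and causality of $(\Z^d,\mathrm{Exp}(1))$ guarantees the dual exploration terminates. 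Two small remarks. First, your step ``select $\alpha$ maximal over all valid decompositions'' is vacuous: evaluating the decomposition at $\sigma=\mathbf{0}$ gives $\alpha=P(1\,|\,\mathbf{0})$, so the $\mathbf{1}$-coefficient is uniquely determined (and positive rates at $\mathbf{0}$ is exactly $\alpha>0$, which is the only consequence of positive rates you use). Second, tidy the bookkeeping in the additivity identity so that the absorbed state and the empty state are treated separately (absorbed forces the value $1$ independently of $\xi$; empty forces $0$ via $\max_{\vn}=0$); as written, $\max_{l\in A_t}\xi(l)$ is evaluated at a cemetery state in the absorbed case. Neither point affects the validity of the argument or the rate $\alpha$.
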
  

Let us consider now cancellative IPS.\\

\begin{defi}[Cancellative IPS]
    An IPS is called \emph{cancellative}, if its transition matrix $P$ can be expressed as a convex combination of matrices of the form
    \begin{align*}
         &P_{0,S}\bb{1\,|\,\sigma} = \abs{S\cap \sigma^{-1}\bb{1}}\bb{\text{mod }2},\\
         &P_{1,S}\bb{1\,|\,\sigma} = 1-\abs{S\cap \sigma^{-1}\bb{1}}\bb{\text{mod }2}.
     \end{align*}  
\end{defi}

Cancelative IPS work similar to additive IPS. Instead of just choosing a subset~$S \subset \mathcal{N}$, one also chooses a type of update. The first type makes the present state a~$1$ if there is an odd number of $1$s present in~$j+S$. The second, if there is an even number of~$1$s.\\

\begin{theo}[Ergodicity of cancellative IPS, cf.~Corollary 2.3, page 72, in~\cite{Griffeath:78}]\label{pro:cancellative} 
     Let IPS$(P)$ be homogeneous over $\Z^d$ with positive rates and alphabet $\cb{0,1}$. If $P$ is a cancellative then IPS$(P)$ is exponentially \hyperref[def:ergodicity]{ergodic}. The rate of \hyperref[def:ergodicity]{ergodicity} is at least as large as twice the coefficient of the $\mathbf{1}$ matrix in the above convex combination.\\
\end{theo}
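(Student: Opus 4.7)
The plan follows the template of Theorem~\ref{pro:additive}: combine Griffeath's original ergodicity result for cancellative IPS with the Time-Scaling Lemma. The bridge between Griffeath's graphical/flip-rate framework and our transition-matrix formulation is furnished by the characterization in Appendix~\ref{sec:char_cancellative_IPS}. Under that correspondence, an update of type $P_{1,S}$ or $P_{0,S}$ becomes an oriented arrow (decorated with the appropriate parity-type) in Griffeath's graphical representation, and the associated dual process is a system of annihilating random walks with killing. The quantity $\E\bb{f\bb{\zeta_t}g\bb{\zeta_0}}$ is recovered by running these walks backward and applying XOR-type cancellations, and $f(\zeta_t)$ becomes independent of $\zeta_0$ as soon as every dual particle has been killed.

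Assuming the given cancellative decomposition has $a_{\mathbf{1}} > 0$, I would first reproduce Griffeath's rate bound. The two $\vn$-type updates, $\mathbf{1} = P_{1,\vn}$ and $\mathbf{0} = P_{0,\vn}$, both kill a dual particle on arrival. In Griffeath's analysis of the cancellative dual the two contributions combine into an effective killing rate of $2 a_{\mathbf{1}}$ per dual particle, the factor of $2$ reflecting the paired role of $\mathbf{1}$ and $\mathbf{0}$ through the identity $P_{1,S}+P_{0,S} = \mathbf{1}+\mathbf{0}$ (as unnormalized measures on $\mathcal{A}$), which allows paired $P_{1,S}$--$P_{0,S}$ mass to be redistributed into equal $\mathbf{1}$ and $\mathbf{0}$ mass. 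A standard exponential-tail estimate on the lifetime of the dual system then yields, for any local $f,g$,
\[
\abs{\cov{f\bb{\zeta_0}}{g\bb{\zeta_t}}} \leq C(f,g)\cdot e^{-2 a_{\mathbf{1}} t},
\]
so by Lemma~\ref{pro:ergodicity_equiv_covdec} the IPS is exponentially \hyperref[def:ergodicity]{ergodic} with rate $\geq 2 a_{\mathbf{1}}$.

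To bootstrap this into a proof from the bare \hyperref[positive rates]{positive-rates} hypothesis, I then apply the \hyperref[pro:time-scaling]{Time-Scaling Lemma}. Since $I = P_{0,\{0\}}$ is itself a cancellative building block, the matrix $\hat P_\lambda := \lambda P + \bb{1-\lambda}I$ is cancellative for every $\lambda \in (0,1]$, and by the corollary to Lemma~\ref{pro:time-scaling} the IPS$(P)$ and IPS$(\hat P_\lambda)$ have the same ergodicity status with rates differing by the factor $\lambda$. It therefore suffices to exhibit some $\lambda \in (0,1]$ and a cancellative decomposition of $\hat P_\lambda$ with $\mathbf{1}$-coefficient $\hat a_{\mathbf{1}} > 0$; applying the first step to $\hat P_\lambda$ and transferring back then yields exponential ergodicity of IPS$(P)$ with rate $\geq 2\hat a_{\mathbf{1}} / \lambda$, which under an optimal choice of $\lambda$ recovers the claimed bound $\geq 2 a_{\mathbf{1}}$. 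Positive rates enters here via $P(1\,|\,\mathbf{0}) > 0$ and $P(1\,|\,\mathbf{1}) < 1$, which respectively force $\sum_S a_{1,S} > 0$ and a lower bound on the total $P_{0,S}$-mass in any decomposition of~$P$.

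The main obstacle is the last step: converting positive rates into the existence of a Time-Scaled $\hat P_\lambda$ with strictly positive $\mathbf{1}$-coefficient in \emph{some} cancellative decomposition. The difficulty is that positive rates at $\mathbf{0}$ and $\mathbf{1}$ only guarantee $P_{1,S}$-mass and $P_{0,S'}$-mass for \emph{possibly different} sets $S,S'$, whereas the pairing trick $P_{1,S}+P_{0,S}=\mathbf{1}+\mathbf{0}$ requires the same $S$. I expect this to be resolved either by a continuity/compactness argument on the cancellative polytope, showing that the function $\lambda\mapsto\sup\hat a_{\mathbf{1}}$ (supremum over decompositions of $\hat P_\lambda$) is strictly positive for some $\lambda\in(0,1]$ whenever $P$ has positive rates, or by an explicit construction exploiting the algebra of composing $P_{i,S}$-arrows — this being the direct analog of the corresponding step in the proof of Theorem~\ref{pro:additive}.
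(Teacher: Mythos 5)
This statement is not proved in the paper at all: it is imported verbatim from Griffeath (Corollary 2.3, p.~72 of \cite{Griffeath:78}), and the only original content the paper supplies is the dictionary in Appendix~\ref{sec:char_cancellative_IPS} translating Griffeath's percolation substructures $\bb{V_{i,S,x},W_{i,S,x}}$ into the transition matrices $P_{0,S}$, $P_{1,S}$. So the first half of your proposal --- the dual system of annihilating walks, the killing of dual particles by the $\vn$-type updates, the effective killing rate $2a_{\mathbf{1}}$ from the pairing $P_{1,S}+P_{0,S}=\mathbf{1}+\mathbf{0}$ --- is a sketch of the argument the paper deliberately outsources. As a sketch it is pointed in the right direction, but the factor $2$ and the exponential tail on the dual lifetime are asserted rather than derived, so it does not constitute a self-contained proof either.

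The second half is where the proposal goes wrong. You try to use the Time-Scaling Lemma to manufacture, from positive rates alone, a decomposition of $\hat P_\lambda=\lambda P+\bb{1-\lambda}I$ with strictly positive $\mathbf{1}$-coefficient. But interpolating toward $I=P_{0,\cb{0}}$ only \emph{adds} mass to the identity building block and multiplies every other coefficient --- including that of $\mathbf{1}=P_{1,\vn}$ --- by $\lambda\leq 1$; it can never create $\mathbf{1}$-mass that was absent from the original decomposition. The ``main obstacle'' you flag at the end is therefore not a technical loose end but a dead end for that route. It is also a misreading of the division of labor in the paper: the time-scaling step is the content of Theorem~\ref{griffeath extended}, which \emph{extends} Theorem~\ref{pro:cancellative} by allowing a negative identity coefficient, and it presupposes Theorem~\ref{pro:cancellative} as a black box. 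The theorem as stated needs no bootstrap: the rate is expressed in terms of the $\mathbf{1}$-coefficient of the \emph{given} convex combination, so when that coefficient is positive your first half (i.e., Griffeath's argument) is the entire proof, and the time-scaling machinery should be deleted from it.
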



We want to note that all of the matrices $P_S$ are deterministic and in the additive case \hyperref[def:monotonicity]{monotone}. The way we extend these theorems remains the same - we characterize the IPS$(P)$ for which there exists another of the form $\text{IPS}(\lambda P + \bb{1-\lambda}I)$ which satisfies the conditions in either theorem.\\
\begin{theo}[Extension of Griffeath's theorems]\label{griffeath extended}
   Theorems~\ref{pro:additive} and~\ref{pro:cancellative} also hold when the coefficient of $P_{\cb{0}}$ or~$P_{0,\cb{0}}$ (i.e.~the identity) is allowed to be negative (the coefficients must still sum to $1$).
\end{theo}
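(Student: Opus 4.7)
The plan is to apply the Time-Scaling Lemma to absorb the negative identity coefficient into a harmless slowdown of time, and thereby reduce the extended theorem to the original statement of Theorem~\ref{pro:additive} (resp.~Theorem~\ref{pro:cancellative}). Suppose the transition matrix $P$ admits the extended representation
$$P = c_0\, I + \sum_{S \neq \cb{0}} c_S\, P_S + c_{\mathbf{1}}\, \mathbf{1}$$
with $c_0 < 0$, all other coefficients non-negative, and $c_0 + \sum_{S \neq \cb{0}} c_S + c_{\mathbf{1}} = 1$. The cancellative version is formally identical after replacing the family $\cb{P_S}$ with $\cb{P_{0,S}, P_{1,S}}$ and noting that $P_{0,\cb{0}} = I$ while $P_{1,\vn} = \mathbf{1}$.

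The main step is to choose $\lambda \in (0,1]$ small enough that the new identity coefficient $1 - \lambda + \lambda c_0$ is non-negative; the extremal choice $\lambda := (1-c_0)^{-1} \in (0,1)$ works and kills the identity term entirely. With this $\lambda$, I would define
$$Q := \lambda P + (1-\lambda)\, I = \lambda \sum_{S \neq \cb{0}} c_S\, P_S + \lambda c_{\mathbf{1}}\, \mathbf{1},$$
and a direct check shows the remaining coefficients are non-negative and sum to $1$. Hence $Q$ is a genuine convex combination of exactly the matrices appearing in the hypothesis of the original theorem, with coefficient $\lambda c_{\mathbf{1}}$ on $\mathbf{1}$. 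The other hypotheses are trivially preserved: homogeneity over $\Z^d$ and the alphabet $\cb{0,1}$ are inherited, and positive rates follow from $Q - I = \lambda(P-I)$, since $Q$ differs from $I$ on exactly the same entries as $P$.

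Applying Theorem~\ref{pro:additive} (resp.~Theorem~\ref{pro:cancellative}) to IPS$(Q)$ then gives exponential ergodicity with rate at least $\lambda c_{\mathbf{1}}$ (resp.~$2\lambda c_{\mathbf{1}}$), and the corollary to the Time-Scaling Lemma transfers this back to conclude that IPS$(P)$ is exponentially ergodic with rate at least $c_{\mathbf{1}}$ (resp.~$2 c_{\mathbf{1}}$), matching the conclusion of the original theorems. I do not foresee a real obstacle here: the whole content of the extension is encapsulated in the Time-Scaling Lemma, and the rest is purely algebraic bookkeeping to verify that the convex-combination structure and the positive-rates property survive the scaling.
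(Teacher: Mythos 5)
Your proof is correct and follows essentially the same route as the paper: take the convex combination $Q = \lambda P + (1-\lambda)I$ for a small enough $\lambda$ so that the identity coefficient becomes non-negative, apply the original Griffeath theorem to $Q$, and transfer the exponential ergodicity (with the correct rate) back to $P$ via the Time-Scaling Lemma. Your explicit choice $\lambda = (1-c_0)^{-1}$ and the check that positive rates survive via $Q - I = \lambda(P - I)$ are slightly more detailed than the paper's "for $\lambda$ sufficiently close to $0$," but the argument is the same.
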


\begin{proof}[Proof of Theorem~\ref{griffeath extended}]
We only show how to extend Theorem~\ref{pro:additive}. The extension of Theorem~\ref{pro:cancellative} is similar. Let $P$ be a transition matrix with positive rates, expressible as
$$P =\alpha\mathbf{1} + \sum_{S\subseteq \mathcal{N}}\lambda_S P_S$$
where $\alpha + \sum \lambda_S =1$ and $\lambda_S\geq 0$, with the possible exception of $\lambda_{\cb{0}}$.
Consider the transition matrix 
$$Q =  \lambda P + \bb{1-\lambda}I = \lambda\cdot \alpha \mathbf{1}+ \sum_{S\subseteq\mathcal{N}}\lambda\cdot\lambda_S P_S + \bb{1-\lambda}P_{\cb{0}}.$$
For $\lambda$ sufficiently close to $0$ the coefficient of $P_{\cb{0}}=I$ becomes positive, which means that by Theorem~\ref{pro:additive} (or~\ref{pro:cancellative}) the $\text{IPS}(Q)$ is exponentially \hyperref[def:ergodicity]{ergodic} with rate $\lambda\cdot\alpha$. We conclude with the Time-Scaling Lemma which implies that IPS$(P)$ is exponentially \hyperref[def:ergodicity]{ergodic} with rate $\alpha$.
\end{proof}

\section{A new criterion for decay of covariances}\label{sec:covdec}

In this section, we develop a method to show exponential decay of correlations of an IPS. The main result of this section, namely Theorem~\ref{pro:main-result} from below, is an extension of a result by Leontovitch and Vaserstein \cite{VasLeo:70} for PCAs (see Theorem~\ref{pro:leontovitch} from below). For a reference in English see Chapter~$4$ and Chapter~$7$ in~\cite{Toom:90}. Our method allows to extend the criterion to arbitrary alphabets and demands fewer assumptions on the underlying lattice structure. For example, the interaction neighborhoods~$\mathcal{N}_j$ do not have to be uniformly bounded. As our method allows to study IPS in continuous time, we also apply the time-scaling trick which relaxes the central assumption on contractivity. \\

The main idea is as follows. Consider the covariance~$\cov{f(\zeta_0)}{g(\zeta_t)}$ between two functions on the configuration space, one at time $0$ and one at time $t$. Increasing $t$ has the effect of adding more updates to $\zeta_t$. If each of those updates acts like a contraction then an exponential decay of correlations can be deduced. Compared to discrete time IPS, this argument faces additional technical challenges when applied to continuous time. The reason is that the times of updates are random and not well-ordered.\\

\bigskip
\subsection{Auxillary definitions}

Before stating our theorem let us define a few auxiliary objects. We will start by defining a convenient semi-norm on the space of local functions for a given alphabet and lattice.\\

\begin{defi}[Representational semi-norm]\label{def:representational_seminorm}
    For any family $\mathcal{F}\subseteq \mathbf{F}$ such that $\text{Span}\bb{\mathcal{F}}= \mathbf{F}$ we define a \emph{representational semi-norm} on $F$ as follows:
    $$\norm{f}_\mathcal{F}:= \inf\cb{\sum_{1\leq k\leq n}\abs{a_k}\;:\;\exists\bb{g_1,\dots,g_n\in\mathcal{F}\setminus\cb{\text{const.}}}\Cb{\sum_{1\leq k\leq n} a_k\cdot g_k = f - \text{const.}}}.$$
\end{defi}

\begin{defi}[Product basis] \label{product basis}
    For a linearly ordered, finite alphabet $\bb{\mathcal{A}, \geq}$ let $\ubar{a}$ be its minimal element. Let us consider two functions $x: \mathcal{A}\setminus\{\ubar{a}\}\rightarrow \R$ and $y: \mathcal{A}\setminus\{\ubar{a}\} \rightarrow\R$ such that $x(a)\neq y(a)$ for all $a\in\mathcal{A}\setminus\{\ubar{a}\}$. We consider the family of functions
    $$\mathcal{F}_{\Pi}(x,y)=\cb{\bigchi_{K, a_K} \in \mathbf{F}\;:\;K\subseteq \Lambda,\; \abs{K}<\infty,\; a_K\in \bb{\mathcal{A}\setminus\{\ubar{a}\}}^K},$$
    where $\bigchi_{K, a_K}$ is defined as
    \begin{align*}
        &\bigchi_{\vn} := 1,\\
        &\bigchi_{K,a_K}:=\prod_{j\in K} \bigchi_{\cb{j},a_K(j)},\\
        &\bigchi_{\cb{j},a}\bb{\zeta} := \left\{\begin{array}{lc}
        x(a), &  \zeta(j) \geq a\\
        y(a), &  \zeta(j) < a.
    \end{array}\right.
    \end{align*}
    The next statement justifies to call $\mathcal{F}_{\Pi}$ a \hyperref[product basis]{product basis} of $\mathbf{F}$.\\
    
 \end{defi}

    \begin{lem}\label{pro:product_basis_is_a_basis}
        The family $\mathcal{F}_{\Pi}$ is a basis of the linear space $\mathbf{F}$.
    \end{lem}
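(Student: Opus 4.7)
The plan is to reduce the statement to finite support, do a dimension count to show that proving spanning is equivalent to proving linear independence, and then derive both from a one-site case via a tensor-product argument.

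First, since the linear space $\mathbf{F}$ consists of functions with finite domain and any finite linear combination of elements of $\mathcal{F}_\Pi$ depends on only finitely many coordinates, it suffices to fix an arbitrary finite set $K\subseteq\Lambda$ and show that
$$\mathcal{F}_{\Pi}^{K}:=\bb{\bigchi_{K',a_{K'}}\;:\;K'\subseteq K,\ a_{K'}\in\bb{\mathcal{A}\setminus\cb{\ubar{a}}}^{K'}}$$
is a basis of the space $V_K$ of functions $\mathcal{A}^K\to\R$ (with $\bigchi_\vn$ corresponding to the constant function $1$). Indeed, an arbitrary $f\in\mathbf{F}$ can be viewed as an element of $V_K$ for any finite $K\supseteq\dom f$, and linear independence on every such $V_K$ forces global independence.

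Second, the dimension count matches: $\dim V_K=\abs{\mathcal{A}}^{\abs{K}}$, and by the binomial theorem
$$\abs{\mathcal{F}_{\Pi}^{K}}=\sum_{K'\subseteq K}\bb{\abs{\mathcal{A}}-1}^{\abs{K'}}=\abs{\mathcal{A}}^{\abs{K}}.$$
Consequently it is enough to prove spanning (equivalently, linear independence). The plan is to prove spanning.

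Third, treat the single-site case $K=\cb{j}$. Rewrite
$$\bigchi_{\cb{j},a}\bb{\zeta}=y(a)+\bb{x(a)-y(a)}\cdot\ind\Cb{\zeta(j)\geq a}.$$
Since $x(a)\neq y(a)$, it follows that $\cb{\bigchi_\vn}\cup\cb{\bigchi_{\cb{j},a}\,:\,a\in\mathcal{A}\setminus\cb{\ubar a}}$ spans the same subspace of $V_{\cb{j}}$ as $\cb{1}\cup\cb{\ind[\zeta(j)\geq a]\,:\,a\in\mathcal{A}\setminus\cb{\ubar a}}$. These $\abs{\mathcal{A}}$ indicator-type functions are evidently linearly independent (evaluating at the successive values of $\mathcal{A}$ and taking differences of consecutive evaluations isolates each coefficient), and thus form a basis of $V_{\cb{j}}$.

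Fourth, pass from single sites to a general finite $K$ by the natural isomorphism $V_K\cong\bigotimes_{j\in K}V_{\cb{j}}$. Tensor products of bases are bases, so picking at each site $j\in K$ either the basis element $\bigchi_\vn=1$ or one of the $\bigchi_{\cb{j},a}$ produces a basis of $V_K$. The product of such choices over $K$ is precisely $\bigchi_{K',a_{K'}}$, where $K'\subseteq K$ records the sites at which the non-constant element was selected and $a_{K'}$ records the choice of $a$. This exhausts $\mathcal{F}_\Pi^K$, completing the argument.

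The proof involves no serious obstacle; the only care needed is the combinatorial bookkeeping in the tensor-product step and the verification that the affine relationship between $\bigchi_{\cb{j},a}$ and $\ind[\zeta(j)\geq a]$ preserves the spanning property, both of which are straightforward.
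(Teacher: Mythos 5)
Your proof is correct and rests on the same two pillars as the paper's: the dimension count $\abs{\mathcal{F}_\Pi^K}=\sum_{K'\subseteq K}\bb{\abs{\mathcal{A}}-1}^{\abs{K'}}=\abs{\mathcal{A}}^{\abs{K}}$ and the observation that the affine relation $\bigchi_{\cb{j},a}=y(a)+\bb{x(a)-y(a)}\ind\bb{\zeta(j)\geq a}$ with $x(a)\neq y(a)$ lets one pass between the $\bigchi$'s and threshold/cylinder indicators. The only organizational difference is that the paper proves spanning by explicitly building cylinder indicators as expanded products of normalized single-site factors and then deduces independence from the count, whereas you first exhibit a basis of each single-site space and tensor them up over $K$; this packaging is slightly cleaner (it yields spanning and independence in one stroke, making your dimension-count step technically redundant) but is not a substantively different route.
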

    \begin{proof}[Proof of Lemma~\ref{pro:product_basis_is_a_basis}]
         To convince us that $\text{Span}\bb{\mathcal{F}_{\Pi}}=\mathbf{F}$, we observe that one can easily construct the indicator function of arbitrary cylinders by multiplying terms of the form
    $$\frac{\bigchi_{\cb{j},a}-y(a)}{x(a)-y(a)},\;\text{ or  }\;\frac{\bigchi_{\cb{j},a}-x(a)}{y(a)-x(a)},$$
    and then by expanding the result. These indicator functions span $\mathbf{F}$ in the following way:
    $$f(\zeta) = \sum_{\sigma\in \mathcal{A}^{\text{Dom}\bb{f}}}f\bb{\sigma} \ind\bb{\zeta\bb{\text{Dom}(f)} = \sigma}.$$
    It is left to show that the representations in $\mathcal{F}_{\Pi}$ are unique. For this it suffices to show that for any finite $K\subset \Lambda$ the functions $$\cb{\bigchi_{L,a_L} \;:\; L\subseteq K, a_L\in\bb{\mathcal{A}\setminus\{\ubar{a}\}}^L}$$
    form a basis of $\R^{\mathcal{A}^K}$. Indeed, if that were not the case, we would find a counterexample to their linear independence within some finite $K$. The dimension of $\R^{\mathcal{A}^K}$ is $\abs{\mathcal{A}}^{\abs{K}}$ and there are exactly as many functions in this family.
    Since they span $\R^{\mathcal{A}^K}$ they must be linearly independent.
    \end{proof}

\begin{nota}
 For convenience, we will write $\norm{\cdot}_{\Pi}$ rather than $\norm{\cdot}_{\mathcal{F}_{\Pi}}$.
 For the special case when the alphabet is of size two, we will identify the functions $x$, $y$ with their unique values and write
    $$\bigchi_{\cb{j}}:=\bigchi_{\cb{j},\bar{a}} \quad \mbox{and} \quad \bigchi_K:=\prod_{j\in K}\bigchi_{\cb{j}},$$
    as these indexes are not needed.\\
\end{nota}

The next remark shows how the semi-norm~$\| \cdot \|_{\mathcal{F}}$ can be used to characterize ergodicity.\\

\begin{rem}\label{rem:ergodicity_semi_norm}
Let us consider a basis~$\mathcal{F}$ that contains the constant function~$1$. Then, a function~$f$ can be uniquely written as linear combination of the basis functions i.e.
      \begin{align*}
          f = a_{\bold{1}} + \sum_{h \in \mathcal{F} \setminus \left\{ \bold{1} \right\}} a_h \  h.
       \end{align*}
       The semi-norm~$\|f \|_{\mathcal{F}}$ is given by $|f| = \sum_{h \in \mathcal{F} \setminus \left\{ \bold{1} \right\}} |a_h | $.  We observe that ergodicity means that $\CE{f(\zeta_t)}{\zeta_0}$ converges to a constant as~$t \to \infty$ for any initial condition~$\zeta_0$ and test function~$f$. This is implied by~$\| \CE{f(\zeta_t)}{\zeta_0}  \|_{\mathcal{F}} \to 0$. \\
\end{rem}

By the last remark, it becomes evident that in order to show ergodicity of the underlying IPS, one needs to understand the evolution of~$\CE{f(\zeta_t)}{\zeta_0}$ for arbitrary test-function~$f$. By the construction of the IPS, we observe that~$\zeta_t$ only changes if certain clocks ring and trigger an update. For this reason, we introduce the concept of update operators.  \\

\begin{defi}[Update operators]\label{def:update_operators}
        For an $\text{IPS}\bb{P,\mathcal{C}}$ on $\Lambda$ let us define the linear operators  $E_j, E:\mathbf{F}\bb{\mathcal{A},\Lambda}\rightarrow \mathbf{F}\bb{\mathcal{A},\Lambda}$ for $j\in\Lambda$ by
    
        $$E_j(f)(\sigma) := \CE{f\bb{\zeta_t}}{\zeta_{t^-}=\sigma\textit{ and }\bb{t,j}\in\mathbf{Update}} $$
        and
        \begin{align}\label{equ:update_operator_domain_f}
            E(f) := \frac{1}{\abs{\text{Dom}(f)}}\sum_{j\in \text{Dom}(f)}E_j(f).            
        \end{align}
\end{defi}

    \begin{rem}
        The update operators~$E_j$ specify the mean effect of a one-step update onto a test-function~$f$, assuming that an update happened at site~$j$. If the clock is $\text{Exp}(1)$ then a.s.~at most one site can be updated at the same time. Also, each site is equally likely to be the next one updated.\\
    \end{rem}

\begin{defi}[Update coefficients]
        We consider a product basis~ $\mathcal{F}_{\Pi}$ and  represent the update operator~$E_j\bb{\bigchi_{\cb{j},a}}$ in this basis, i.e. 
        \begin{align}\label{equ:mean_update_operator}
        E_j\bb{\bigchi_{\cb{j},a}}=\desum{S\subseteq \mathcal{N}_j}{a_S\in\bb{\mathcal{A}\setminus\{\ubar{a}\}}^S}C_{S,\;a_S}^{\bb{j,a}}\cdot \bigchi_{S,\;a_S}.          
        \end{align}
        The coefficients  $$\cb{C_{S,\;a_S}^{\bb{j,a}}\;:\;j\in\Lambda,a\in\mathcal{A}\setminus\{\ubar{a}\},\;S\subseteq\mathcal{N}_j,a_S\in\bb{\mathcal{A}\setminus\{\ubar{a}\}}^S}$$ 
        are called representational coefficients of $E_j\bb{\bigchi_{\cb{j},a}}$.\\
\end{defi}

\begin{nota}
    As before, for the special case when the alphabet is of size two, we will omit the indexes and write
    $$C_{S}^{\bb{j}}:=C_{S,\bar{a}_S}^{\bb{j,\bar{a}}}.$$
    If the IPS is homogeneous and the underlying lattice has neighborhoods of the form $\mathcal{N}_j = j + \mathcal{N}$ then we will also omit the position index

    $$C_{S}:=C_{S}^{\bb{j}}.$$
\mbox{}\\
\end{nota}

Let us consider the situation of an IPS with exponential clocks~$\text{Exp}(1)$. Then the operator $\CE{f(\zeta_t)}{\zeta_0}$  can be expressed via the update operators in the following way:
\begin{align}\label{equ:ergodicity_via_mean_observable_step_1}
    \CE{f(\zeta_t)}{\zeta_0} = \sum_{n \geq 0} \mathbb{P}(\mbox{there are $n$ \emph{relevant} updates until $t$}) \ \underbrace{E \circ \cdots \circ E}_{\mbox{$n$-times}}(f).
\end{align}
 Using~\eqref{equ:update_operator_domain_f} and~\eqref{equ:mean_update_operator} one can express the right hand side of~\eqref{equ:ergodicity_via_mean_observable_step_1} in terms of the update coefficients~$C_{S, a_S}^{(j,a)}$. Then, one would expect that a good control on~$|C_{S, a_S}^{(j,a)}|$ would yield ergodicity (cf.~Remark~\ref{rem:ergodicity_semi_norm}). Theorem~\ref{pro:main-result} below makes this heuristic argument rigorous.

\bigskip
\subsection{Decay of correlations by recursion}

In this section we will show how decay of correlations of an IPS can be deduced via a simple recursion argument. We will begin by stating the original theorem by Leontovitch and Vaserstein~\cite{VasLeo:70}. \\

\begin{theo}[Leontovitch \& Vaserstein] \label{pro:leontovitch} Let $R:=\max_{j\in\Lambda} \abs{\mathcal{N}_j}$ be finite and for different real numbers $x,y$ denote
$$\gamma := \max_{1\leq m \leq R}\bb{1,\max\bb{\abs{x},\abs{y}}^{\frac{R}{m}-1},\bb{\abs{\frac{x^m-y^m}{x-y}}+\abs{\frac{x^{m}y-xy^{m}}{x-y}}}^{\frac{R}{m}}}.$$
If for a $PCA$ with alphabet of size two over lattice $\Lambda$ and the product basis with values $x$, $y$ it holds that
$$\sup_{j\in\Lambda}\sum_{S\subseteq \mathcal{N}_j}\abs{C_{S}^{\bb{j}}} < \frac{1}{\gamma},$$
then this $PCA$ is uniformly, exponentially \hyperref[def:ergodicity]{ergodic}.\\
\end{theo}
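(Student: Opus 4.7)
The natural path is to show that the one-step PCA transition operator $\mathcal{P}\colon f \mapsto \CE{f(\zeta_1)}{\zeta_0 = \cdot}$ is a strict contraction on $\mathbf{F}$ in the representational semi-norm $\norm{\cdot}_{\Pi}$ of Definition~\ref{def:representational_seminorm}. If this succeeds, iteration gives $\norm{\mathcal{P}^n f}_{\Pi} \le (\gamma c)^n \norm{f}_{\Pi}$ with $c := \sup_{j} \sum_{S \subseteq \mathcal{N}_j} |C_S^{(j)}| < 1/\gamma$, and (together with Remark~\ref{rem:ergodicity_semi_norm} plus an upgrade to the sup-norm) this yields the claimed uniform exponential ergodicity.

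\textbf{Action on the product basis.} Because in a PCA all sites are updated simultaneously and independently conditional on $\zeta_0$, for any basis element $\chi_K$ one has
$$\mathcal{P}(\chi_K) \;=\; \prod_{k \in K} E_k(\chi_{\{k\}}) \;=\; \prod_{k \in K}\Bigl(\sum_{S_k \subseteq \mathcal{N}_k} C_{S_k}^{(k)}\, \chi_{S_k}\Bigr).$$
Expanding and regrouping by site, the product $\prod_{k \in K} \chi_{S_k}$ contains $\chi_{\{j\}}$ with multiplicity $n_j(\mathbf{S}) := |\{k \in K : j \in S_k\}|$. The two-valued structure of the basis factors lets one collapse each power via
$$\chi_{\{j\}}^m \;=\; A_m\,\chi_{\{j\}} + B_m, \qquad A_m := \frac{x^m-y^m}{x-y}, \qquad B_m := \frac{x y^m - y x^m}{x-y},$$
so that $\prod_j \chi_{\{j\}}^{n_j}$ re-expands as $\sum_U \prod_{j \in U} A_{n_j}\prod_{j \notin U} B_{n_j} \cdot \chi_U$.

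\textbf{Contraction estimate.} Taking absolute values of all coefficients, invoking $|A_m| + |B_m| \le \gamma^{m/R}$ (which is exactly the third entry in the $\max$ defining $\gamma$), and using $|S_k| \le R$, one reads off
$$\norm{\mathcal{P}(\chi_K)}_{\Pi} \;\le\; \prod_{k \in K}\Bigl(\sum_{S_k \subseteq \mathcal{N}_k}|C_{S_k}^{(k)}|\,\gamma^{|S_k|/R}\Bigr) \;\le\; (\gamma c)^{|K|}.$$
For $|K| \ge 1$ and $\gamma c < 1$ this is at most $\gamma c$, so $\norm{\mathcal{P} f}_\Pi \le \gamma c\,\norm{f}_\Pi$ for every local $f$, and the iterated bound follows by a single induction.

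\textbf{Main obstacle.} The delicate step is promoting semi-norm decay to \emph{uniform} decay of $\mathcal{P}^n f$ towards a constant. Since $\norm{\chi_K}_\infty \le \max(|x|,|y|)^{|K|}$, this sup-norm can balloon with $|K|$ when $\max(|x|,|y|) > 1$ and naively swamps the coefficient decay. This is exactly what the remaining two entries $1$ and $\max(|x|,|y|)^{R/m-1}$ in the definition of $\gamma$ are for: one reruns the expansion above against a weighted semi-norm $\norm{f}_{\Pi,w} := \sum_K |a_K|\, w^{|K|}$ with $w \ge \max(|x|,|y|)$, and those extra slots provide exactly the slack needed to keep the weighted contraction factor strictly below one while $\norm{\cdot}_{\Pi,w}$ dominates $\norm{\cdot}_\infty$ up to a multiplicative constant. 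Carrying this weighted version out rigorously is the main technical content; the rest is bookkeeping around the expansion above.
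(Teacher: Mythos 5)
First, a point of reference: the paper does not actually prove Theorem~\ref{pro:leontovitch} — it is quoted from \cite{VasLeo:70} — and the paper's own contribution is the continuous-time analogue, Theorem~\ref{pro:main-result}, whose key step (Lemma~\ref{iteration bound}) is carried out under the standing assumption $|x|,|y|\le 1$. Your core computation is correct and is precisely the discrete-time counterpart of that lemma: conditional independence of simultaneous PCA updates gives $\mathcal{P}(\bigchi_K)=\prod_{k\in K}E_k(\bigchi_{\cb{k}})$, the collapse $\bigchi_{\cb{j}}^m=A_m\bigchi_{\cb{j}}+B_m$ is verified easily on the two values $x,y$, and the chain $\norm{\mathcal{P}(\bigchi_K)}_\Pi\le\prod_{k\in K}\sum_{S_k\subseteq\mathcal{N}_k}|C^{(k)}_{S_k}|\,\gamma^{|S_k|/R}\le(\gamma c)^{|K|}$ is valid. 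One caveat worth recording: the bound $|A_{n_j}|+|B_{n_j}|\le\gamma^{n_j/R}$ requires the multiplicity $n_j=|\cb{k\in K:j\in S_k}|$ to be at most $R$, since the maximum defining $\gamma$ stops at $m=R$; this is an \emph{in}-degree bound, which follows from $\max_j|\mathcal{N}_j|\le R$ in the homogeneous setting the original theorem addresses but is not literally implied by the hypothesis as stated.

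The genuine gap is the step you flag and then defer: upgrading semi-norm contraction to uniform exponential ergodicity when $\max(|x|,|y|)>1$. This is not bookkeeping. The supports of the basis elements appearing in $\mathcal{P}^nf$ can grow like $R^n|\mathrm{Dom}(f)|$, so $\norm{\bigchi_K}_\infty\le\max(|x|,|y|)^{|K|}$ blows up doubly exponentially in $n$ and the unweighted estimate says nothing about $\norm{\mathcal{P}^nf-c_n}_\infty$. Your weighted semi-norm is the right device, but the obvious estimate $|A_m|w+|B_m|\le w\bb{|A_m|+|B_m|}$ costs a factor $w^{|S_k|}$ per neighborhood and produces the condition $c<w^{1-R}/\gamma$ rather than the claimed $c<1/\gamma$; recovering the stated threshold requires distributing the weight $w$ across the $m$ factors responsible for the multiplicity at each site, and that is exactly where the remaining entries $1$ and $\max(|x|,|y|)^{R/m-1}$ of $\gamma$ must be spent. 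Until that computation is carried out the proof is incomplete. (If one is willing to assume $|x|,|y|\le1$ — as Theorem~\ref{pro:main-result} does — your argument closes immediately, since then $\norm{g-c}_\infty\le\norm{g}_\Pi$ and $\norm{\mathcal{P}^nf-c_n}_\infty\le(\gamma c)^n\norm{f}_\Pi$, and the $c_n$ converge; but that special case does not prove the theorem as stated.)
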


The main result of this section is the following theorem. \\

\begin{theo}\label{pro:main-result}
    Assume that for an IPS$(P)$ with a finite alphabet $\mathcal{A}$ there exists a \hyperref[product basis]{product basis} with values $x,y:\mathcal{A}\setminus\{\ubar{a}\}\rightarrow\R$ such that 
    \begin{enumerate}
    \item \label{equ:main_theo_condition_basis_representation_1}The values $x$, $y$ are bounded by $\pm 1$.\\
    
    \item \label{equ:main_theo_condition_basis_representation_2} For all $a\in\mathcal{A}\setminus\{\ubar{a}\}$ it holds $x(a)y(a)\leq 0$.\\

    \item  \label{equ:main_theo_condition_basis} For all $a > b > \ubar{a}$ the inequality $\abs{x(b)}+\abs{y(a)} + \abs{x(b)y(a)}\leq 1$ holds.\\

    \item \label{equ:main_theo_condition_constants_bounded} The constants $\bb{C_{\cb{j},a}^{\cb{j,a}}}_{j\in\Lambda, a\in \mathcal{A}\setminus\{\ubar{a}\}}$ are uniformly bounded.\\
    
    \item \label{equ:main_theo_condition_constractivity} The following inequality is satisfied:
    
    \begin{align}\label{eq_contractivity_gen_leontovitch}      
    \alpha :=\sup_{j\in\Lambda, a\in\mathcal{A}\setminus\{\ubar{a}\}}\bb{C_{\cb{j},a}^{\bb{j,a}}+\desum{S\subseteq \mathcal{N}_j, a_S\in\bb{\mathcal{A}\setminus\{\ubar{a}\}}^S}{\bb{S,\;a_S}\neq \bb{\cb{j},a}}\abs{C_{S,\;a_S}^{\bb{j,a}}} }<1.
    \end{align} 

    \end{enumerate}
    Then IPS$(P)$ has uniform, exponential decay of correlations. More precisely, for any $f,g\in \mathbf{F}$ and $\zeta\in \text{IPS}(P)$
    
    $$\abs{\cov{f(\zeta_0)}{g(\zeta_t)}}\leq 2\norm{f}_{\infty}\norm{g}_{\Pi}e^{-\bb{1-\alpha}t}.$$\\

\end{theo}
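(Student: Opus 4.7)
The plan is to reduce the covariance bound to a decay estimate for the Markov semigroup $P_t$ in the representational semi-norm $\|\cdot\|_\Pi$, and then to prove that estimate by a Duhamel / Gronwall argument exploiting the contractivity assumption~(\ref{eq_contractivity_gen_leontovitch}). Conditioning on $\zeta_0$ and using the law of total covariance gives $\cov{f(\zeta_0)}{g(\zeta_t)}=\cov{f(\zeta_0)}{(P_tg)(\zeta_0)}$. Assumption~(\ref{equ:main_theo_condition_basis_representation_1}) guarantees that every product basis element is bounded by $1$, so every $h\in\mathbf{F}$ admits a constant $c$ with $\|h-c\|_\infty\le\|h\|_\Pi$; combining this with the elementary bound $|\cov{X}{Y}|\le 2\|X\|_\infty\inf_c\|Y-c\|_\infty$ yields $|\cov{f(\zeta_0)}{(P_tg)(\zeta_0)}|\le 2\|f\|_\infty\|P_tg\|_\Pi$. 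It is therefore enough to prove $\|P_tg\|_\Pi\le\|g\|_\Pi\,e^{-(1-\alpha)t}$, and by linearity of $P_t$ together with the uniqueness of expansions in the basis $\mathcal{F}_\Pi$ (Lemma~\ref{pro:product_basis_is_a_basis}) one may reduce to the case where $g=\bigchi_{K,a_K}$ is a single basis element.

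For such a $g$ the central algebraic step is to isolate the $\bigchi_{K,a_K}$-component of $L\bigchi_{K,a_K}$. Since $E_j$ only touches site $j$ and $\bigchi_{K,a_K}$ factors as $\bigchi_{\{j\},a_K(j)}\cdot\bigchi_{K\setminus\{j\},a_K|_{K\setminus\{j\}}}$, substituting the basis expansion~(\ref{equ:mean_update_operator}) of $E_j\bigchi_{\{j\},a_K(j)}$ and isolating the diagonal contribution $(S,a_S)=(\{j\},a_K(j))$ yields
$$L\bigchi_{K,a_K}=(\tilde C-|K|)\bigchi_{K,a_K}+\tilde R,\qquad \tilde C:=\sum_{j\in K}C^{(j,a_K(j))}_{\{j\},a_K(j)},$$
where $\tilde R:=\sum_{j\in K}R_j\cdot\bigchi_{K\setminus\{j\},a_K|_{K\setminus\{j\}}}$ and $R_j:=\sum_{(S,a_S)\ne(\{j\},a_K(j))}C^{(j,a_K(j))}_{S,a_S}\bigchi_{S,a_S}$. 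The technical heart is a \emph{product lemma}: for any two basis elements, $\|\bigchi_{S,a_S}\cdot\bigchi_{K',a_{K'}}\|_\Pi\le 1$. Factoring over sites, the lemma reduces to the single-site estimate $|c|+r\le 1$, where $c$ and $r$ are respectively the constant part and the representational norm of $\bigchi_{\{j\},b}\cdot\bigchi_{\{j\},a}$ in the basis $\{1,\bigchi_{\{j\},\cdot}\}$; for $a\ne b$ this is exactly assumption~(\ref{equ:main_theo_condition_basis}), and for $a=b$ it amounts to $|x(a)y(a)|+|x(a)+y(a)|\le 1$, which follows from (\ref{equ:main_theo_condition_basis_representation_1})--(\ref{equ:main_theo_condition_basis_representation_2}) since $xy\le 0$ and $|x|,|y|\le 1$. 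Combining the product lemma with contractivity~(\ref{equ:main_theo_condition_constractivity}) yields $\|\tilde R\|_\Pi\le|K|\alpha-\tilde C$.

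The Kolmogorov forward equation $\tfrac{d}{dt}P_t\bigchi_{K,a_K}=P_tL\bigchi_{K,a_K}$ together with variation of parameters produces the integral representation
$$P_t\bigchi_{K,a_K}=e^{(\tilde C-|K|)t}\bigchi_{K,a_K}+\int_0^te^{(\tilde C-|K|)(t-s)}P_s\tilde R\,ds.$$
Set $\phi(t):=\sup_{\bigchi\in\mathcal{F}_\Pi,\,\bigchi\ne 1}\|P_t\bigchi\|_\Pi$ and $\Phi(t):=e^{(1-\alpha)t}\phi(t)$, and introduce $A:=|K|-\tilde C+\alpha-1$, $B:=|K|\alpha-\tilde C$. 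One verifies $A\ge B\ge 0$ and $A-B=(1-\alpha)(|K|-1)$; taking $\|\cdot\|_\Pi$ and using $\|P_s\tilde R\|_\Pi\le\|\tilde R\|_\Pi\phi(s)$ gives
$$\Phi_{\bigchi_{K,a_K}}(t)\le e^{-At}+B\int_0^te^{-A(t-s)}\Phi(s)\,ds.$$
If $\Phi\le 1$ on $[0,t]$, the right-hand side is a convex combination of $e^{-At}\le 1$ and $B/A\le 1$, hence $\le 1$. Since $\Phi(0)=1$, a standard Gronwall / bootstrap closes the induction and gives $\Phi\le 1$ everywhere, i.e.~$\phi(t)\le e^{-(1-\alpha)t}$, which finishes the argument.

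The main technical obstacle is the product lemma: this is the one step where the otherwise ad-hoc-looking assumptions~(\ref{equ:main_theo_condition_basis_representation_1})--(\ref{equ:main_theo_condition_basis}) are used in an essential way, and the tight bound $\|\tilde R\|_\Pi\le|K|\alpha-\tilde C$---without which the bootstrap would not close---rests entirely on it. A secondary technicality is justifying the Kolmogorov equation, the Duhamel formula, and the right-continuity of $\phi$ on the space of local functions equipped with the representational semi-norm; this is where the uniform-boundedness assumption~(\ref{equ:main_theo_condition_constants_bounded}) enters to secure the necessary uniformity.
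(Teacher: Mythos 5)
Your proposal takes a genuinely different route from the paper: where you work at the level of the semigroup and generator (Kolmogorov forward equation, Duhamel, Gronwall), the paper runs a probabilistic recursion directly on the covariances, using the stopping times $\pi_n(t)=\pi_{K_n}\circ\dots\circ\pi_{K_0}(t)$ of successive relevant updates, and then invokes the Time-Scaling Lemma to pass from the crude constant $\beta$ (all coefficients in absolute value) to $\alpha$. Your central algebraic ingredients are the same as the paper's: the product lemma $\norm{\bigchi_{\cb{j},a}\cdot\bigchi_{\cb{j},b}}_{\Pi}^*\leq 1$ resting on Conditions 1--3, and the bound $\sum\abs{C}\leq$ (contractivity constant). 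Your idea of isolating the diagonal coefficients $\tilde C=\sum_{j\in K}C^{(j,a_K(j))}_{\cb{j},a_K(j)}$ inside the generator and absorbing them into the free evolution $e^{(\tilde C-\abs{K})t}$ is an elegant substitute for the paper's time-scaling step, and the arithmetic $A-B=(1-\alpha)(\abs{K}-1)\geq 0$, $B\geq 0$ is correct.

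However, there is a genuine gap in the analytic closure, and it is not the "secondary technicality" you label it as. In continuous time $P_t\bigchi_{K,a_K}$ is \emph{not} a local function, so $\norm{P_t\bigchi_{K,a_K}}_{\Pi}$ is not defined by Definition~\ref{def:representational_seminorm}; extending the semi-norm to infinite $\ell^1$-combinations of basis elements is possible, but then the finiteness of $\phi(t)=\sup_{K,a_K}\norm{P_t\bigchi_{K,a_K}}_{\Pi}$ is exactly what is in question, and no a priori bound is supplied. A power-series estimate fails: $\norm{L^n\bigchi_{K,a_K}}_{\Pi}$ grows like $\prod_{i<n}(\abs{K}+ir)$, so $\sum_n\frac{t^n}{n!}\norm{L^n\bigchi_{K,a_K}}_{\Pi}$ need not converge, and in any case is not uniform in $\abs{K}$. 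Without $\phi<\infty$ and some regularity of $\phi$, the bootstrap cannot start. Worse, even granting finiteness, your closure argument ("if $\Phi\leq1$ on $[0,t]$ then the right-hand side is $\leq1$") only shows the set $\cb{t:\Phi\leq 1\text{ on }[0,t]}$ is closed; to show it is also open you need a uniform quantitative margin, and the constants $A_K,B_K$ are unbounded over $K$ (as $\abs{K}\to\infty$, and as $\abs{S}\to\infty$ when the $\mathcal{N}_j$ are not uniformly bounded --- a generality the theorem explicitly claims), so the standard contradiction argument does not close. The paper's recursion avoids all of this because each quantity in it, $\sup_{K_n,a_{K_n}}\abs{\cov{f(\zeta_0)}{\CE{\ind(\pi_n(t)>0)\bigchi_{K_n,a_{K_n}}(\zeta_{\pi_n(t)})}{\mathcal{F}_0}}}$, is trivially bounded by $2\norm{f}_\infty$, and the recursion terminates after $n$ steps with an explicit remainder $2\beta^n\norm{f}_\infty\to 0$; no a priori finiteness or continuity of a supremum over an infinite family is ever needed. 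To repair your argument you would either have to restrict to finite volumes and pass to the limit, or establish the missing uniform estimates on $\phi$ --- at which point you are essentially forced back to a step-by-step (update-by-update) expansion like the paper's.
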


We observe that Theorem~\ref{pro:main-result} extends Theorem~\ref{pro:leontovitch} in several ways. First, it extends the statement from PCA to IPS. Then, it does not require the interaction neighborhoods $\mathcal{N}_j$ to be uniformly bounded and alphabets can be of arbitrary finite length. Finally, the central inequality is relaxed. \\

\begin{rem}
    If the alphabet is of size two then Condition~\ref{equ:main_theo_condition_basis} of Theorem~\ref{pro:main-result} is null. If the IPS is homogeneous then Condition~\ref{equ:main_theo_condition_constants_bounded} is automatically satisfied. In the special case of homogeneous IPS with the alphabet size two Condition~\ref{equ:main_theo_condition_constractivity} becomes

    $$\alpha :=C_{\cb{0}}+\desum{S\subseteq \mathcal{N}}{S\neq \cb{0}}\abs{C_{S}} <1.$$\\
\end{rem}

The strategy for the proof of Theorem~\ref{pro:main-result} is as follows: First, we will prove the result assuming the stronger inequality 
\begin{align}\label{equ:main_theo_condition_contractivity_strong}
  \beta := \sup_{j\in\Lambda,a\in\mathcal{A}\setminus\{\ubar{a}\}}\desum{S\subseteq \mathcal{N}_j}{a_S\in\bb{\mathcal{A}\setminus\{\ubar{a}\}}^S}\abs{C_{S,\;a_S}^{\bb{j,a}}} <1.    
\end{align}
This condition is stronger than the inequality~\eqref{eq_contractivity_gen_leontovitch} as the terms~$C_{\cb{j},\;a_{\cb{j}}}^{\bb{j,a}}$ appear with absolute values. Then in the second main step, we will weaken this condition using the \hyperref[pro:time-scaling]{Time-Scaling Lemma}, getting rid of those absolute values.\\

Let us turn to the first step. We introduce auxiliary stopping times to label the updates in a given subset of $\Lambda$. \\

\begin{defi}[Times of previous updates]\label{def:time_prev_update}
    Let $\Lambda$ be a lattice such that $\bb{\Lambda,\text{Exp}(1)}$ is \hyperref[def:causal]{causal}.
    For any finite $K\subset \Lambda$ and $t\geq 0$ let~$\pi_K(t)$ denote the stopping time of the last update that happened somewhere in $K$ up to time $t$, i.e.~
    $$\pi_K(t):=\sup\cb{s<t\;:\; \cb{s}\times K \cap \mathbf{Update}\neq \vn}\cup\cb{0}.$$
\end{defi}

The core of the argument is to show that an update in $K$ contracts the covariance $\cov{f(\zeta_0)}{\bigchi_{K,a_K}(\zeta_t)}$. The central ingredient in deducing the contraction is  Lemma~\ref{iteration bound} below. After deducing the contraction property, it is left to estimate the distribution of the number of relevant updates up to a specified time.\\

\begin{lem}\label{iteration bound}
    If the \hyperref[product basis]{product basis} $\mathcal{F}_{\Pi}$ satisfies the Conditions~\ref{equ:main_theo_condition_basis_representation_1},~\ref{equ:main_theo_condition_basis_representation_2} and~\ref{equ:main_theo_condition_basis} of Theorem~\ref{pro:main-result}), then

    $$\norm{E(\bigchi_{K,\;a_K})}_{\Pi} \leq \beta.$$
    \mbox{} \\
    
\end{lem}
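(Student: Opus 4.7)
My plan is to reduce the claim, via a single-site expansion of the update, to an algebraic bound on semi-norms of products of basis elements. First, I factor
\begin{equation*}
\bigchi_{K,a_K} = \bigchi_{\{j\},a_K(j)}\cdot\bigchi_{K\setminus\{j\},\, a_K|_{K\setminus\{j\}}},
\end{equation*}
and observe that $E_j$ is the conditional expectation with respect to the update at site $j$, so it acts only on the first factor: $E_j(\bigchi_{K,a_K}) = \bigchi_{K\setminus\{j\},\, a_K|_{K\setminus\{j\}}}\cdot E_j(\bigchi_{\{j\},a_K(j)})$. Expanding $E_j(\bigchi_{\{j\},a_K(j)})$ in the product basis via its update coefficients $C_{S,a_S}^{(j,a_K(j))}$, the triangle inequality together with the averaging in the definition of $E$ gives
\begin{equation*}
\norm{E(\bigchi_{K,a_K})}_\Pi \leq \frac{1}{\abs{K}}\sum_{j\in K}\sum_{S,\,a_S} \abs{C_{S,a_S}^{(j,a_K(j))}}\cdot\norm{\bigchi_{S,a_S}\,\bigchi_{K\setminus\{j\},\, a_K|_{K\setminus\{j\}}}}_\Pi.
\end{equation*}
It therefore suffices to show that every such product of basis elements has semi-norm at most $1$; the inner double sum is then bounded by $\beta$ by its very definition, and the outer average over $j\in K$ yields the lemma.

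To bound the semi-norm of such a product I rewrite it as a product over sites $i\in S\cup(K\setminus\{j\})$ of single-site factors $u_i$. For $i$ lying in only one of the two index sets, $u_i$ is already a basis element. For $i\in S\cap(K\setminus\{j\})$, $u_i$ is a product of two single-site basis elements at the same coordinate, which I reduce via two elementary identities, verified by checking the at most three possible values of $\zeta(i)$:
\begin{equation*}
\bigchi_{\{i\},a}^2 = (x(a)+y(a))\,\bigchi_{\{i\},a}-x(a)y(a),
\end{equation*}
and, for distinct indices $a>b$,
\begin{equation*}
\bigchi_{\{i\},a}\,\bigchi_{\{i\},b} = x(b)\,\bigchi_{\{i\},a}+y(a)\,\bigchi_{\{i\},b}-y(a)x(b).
\end{equation*}
Writing each $u_i = c_i + \sum_c \lambda_{i,c}\,\bigchi_{\{i\},c}$ and fully expanding the resulting product (whose factors depend on disjoint coordinates), I obtain a linear combination of pairwise distinct basis elements plus one constant term, yielding $\norm{\prod_i u_i}_\Pi \leq \prod_i\bigl(\abs{c_i} + \sum_c \abs{\lambda_{i,c}}\bigr)$.

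The main obstacle, and the precise reason Conditions~\ref{equ:main_theo_condition_basis_representation_1}--\ref{equ:main_theo_condition_basis} of Theorem~\ref{pro:main-result} are formulated as they are, is verifying that each single-site factor on the right is bounded by $1$. Sites in only one of $S,\,K\setminus\{j\}$ contribute $1$ trivially. For the same-index case the factor is $\abs{x(a)+y(a)}+\abs{x(a)y(a)}$; Condition~\ref{equ:main_theo_condition_basis_representation_2} ($x(a)y(a)\leq 0$) reduces $\abs{x(a)+y(a)}$ to $\bigl|\abs{x(a)}-\abs{y(a)}\bigr|$, and combined with Condition~\ref{equ:main_theo_condition_basis_representation_1} a short computation ($\bigl|\abs{x(a)}-\abs{y(a)}\bigr|+\abs{x(a)}\abs{y(a)}\leq\max(\abs{x(a)},\abs{y(a)})\leq 1$) gives the bound. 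For the different-index case $a>b$, the factor is exactly $\abs{x(b)}+\abs{y(a)}+\abs{x(b)y(a)}$, which is Condition~\ref{equ:main_theo_condition_basis}. Combining, $\norm{\bigchi_{S,a_S}\,\bigchi_{K\setminus\{j\},\, a_K|_{K\setminus\{j\}}}}_\Pi\leq 1$, completing the argument.
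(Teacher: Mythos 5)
Your proposal is correct and follows essentially the same route as the paper's proof: single-site factorization of $E_j$, expansion of $E_j(\bigchi_{\{j\},a})$ in the product basis to extract $\beta$ via the triangle inequality, and then the reduction of same-site products $\bigchi_{\{i\},a}\bigchi_{\{i\},b}$ through exactly the two identities the paper uses, with Conditions~\ref{equ:main_theo_condition_basis_representation_1}--\ref{equ:main_theo_condition_basis} entering at the same point to bound each single-site factor by $1$. The only (cosmetic) difference is that you expand the product of single-site factors directly instead of invoking the paper's submultiplicativity inequality for the starred semi-norm; the content is identical.
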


\begin{proof}[Proof of Lemma~\ref{iteration bound}]
The argument is a straightforward calculation though it is easy to get lost in notation. Therefore, we first carry out the proof in the most simple scenario and then in full generality. This also helps to point out the main steps of the argument.  \medskip 

Let us consider the an IPS with alphabet~$\left\{ 0, 1 \right\}$, on a one-dimensional lattice with right nearest-neighbor interaction (for more details see Section~\ref{sec:one_d_ips}). For the linear space~$\mathbf{F}$ we consider the basis functions
\begin{align*}
    \bigchi_K : = \prod_{j \in K} \bigchi_{\left\{j \right\} }
\end{align*}
given by 
\begin{align*}
    \bigchi_{{i}} (\sigma) : = 
    \begin{cases}
        +1, & \mbox{if } \sigma(i)= +1 \\
        -1, & \mbox{if } \sigma(i) = 0.
    \end{cases}
\end{align*}
We observe that in this situation the constant~$\beta$ (cf.~\eqref{equ:main_theo_condition_contractivity_strong}) becomes
\begin{align*}
    | C_{\emptyset}| + |C_{\{ 0 \}}| + |  C_{\{ 1 \}}| + | C_{\{ 0, 1 \}}| = \beta.
\end{align*}
Additionally, the representation~\eqref{equ:mean_update_operator} of the mean-update operator~$E_j\left(\bigchi_K \right)$ for a given update at site~$j$ simplifies to
\begin{align}
    E_j\left(\bigchi_K \right) & = E_j\left( \prod_{i \in K }\bigchi_{\left\{ i\right\} } \right) \notag \\
    & =  \prod_{i \in K \setminus \left\{ j \right\} } \bigchi_{i} \ E_j\left( \bigchi_{\left\{ j\right\} } \right) \notag  \\
     & = \bigchi_{K \setminus \left\{ j \right\}} \ E_j\left( \bigchi_{\left\{ j\right\} } \right) \notag  \\
     & = \bigchi_{K \setminus \left\{ j \right\}  } \left( C_{\emptyset} +C_{\{ 0 \}} \bigchi_{\{j\}} +   C_{\{ 1 \}} \bigchi_{\{j+1\}} +  C_{\{ 0, 1 \}} \bigchi_{\{ j,j+1 \}} \right) \notag \\
     & =  C_\emptyset \bigchi_{K \setminus \left\{ j \right\}  } + C_{\{ 0 \}} \bigchi_{K} +
     \begin{cases}
         C_{\{1\}} \bigchi_{K \setminus \left\{ j , j+1 \right\}} +  C_{\{ 0, 1 \}} \bigchi_{K \setminus \left\{  j+1 \right\}}, & \mbox{if } j+1 \in K \\
         C_{\{1\}} \bigchi_{K \setminus \left\{ j \right\} \cup \left\{ j +1 \right\}} +  C_{\{ 0, 1 \}} \bigchi_{K \cup \left\{  j+1 \right\}},  & \mbox{if } j+1 \notin K.
     \end{cases} \label{equ:repre_E_j_simple_case}
\end{align}
We observe that for any set~$M$ it holds~$\|\bigchi_M \|_{\Pi} =1$. Indeed, this follows from the definition of~$\| \cdot\|_{\Pi}$ and the observation that~$\bigchi_M$ is an element of the basis. Therefore, the identity~\eqref{equ:repre_E_j_simple_case} combined with the triangle inequality yields the desired inequality 
\begin{align}
  \norm{E( \bigchi_K)}_{\Pi} & = \norm{\frac{1}{|K|} \sum_{j \in K}  E_j \left( \bigchi_K \right)}_{\Pi} \notag \\
  & \leq  \frac{1}{|K|} \sum_{j \in K}   \norm{ E_j \left( \bigchi_K \right)  }_{\Pi} \notag \\
  & \leq \sup_j \norm{ E_j\left(\bigchi_K \right) }_{\Pi} \notag \\
  & \leq  | C_{\emptyset}| + |C_{\{ 0 \}}| + |  C_{\{ 1 \}}| + | C_{\{ 0, 1 \}}| = \beta. \label{equ:main_result_proof_simple_case_last_step}
\end{align}

The argument for the general case is similar but deviates in the last step (see~\eqref{equ:main_result_proof_simple_case_last_step}). Again, we observe that
$$\norm{E(\bigchi_{K,\;a_K})}_{\Pi} = \norm{\frac{1}{\abs{K}}\sum_{j\in K} E_j(\bigchi_{K,\;a_K})}_{\Pi}\leq \sup_{j\in K}\norm{E_j(\bigchi_{K,\;a_K})}_{\Pi}.$$
This last term can be bounded as follows:
 
\begin{align*}
& \norm{E_j\bb{\bigchi_{K,\;a_K}}}_{\Pi} \\
& \quad = \norm{\bigchi_{K\setminus\cb{j},\;a_{K\setminus\cb{j}}} \cdot E_j\bb{\bigchi_{\cb{j},\;a_{\cb{j}}}}}_{\Pi} \\
& \quad = 
\norm{\desum{S\subseteq \mathcal{N}_j}{b_S\in\bb{\mathcal{A}\setminus\{\ubar{a}\}}^S}C_{S, a_S}^{\bb{j,a_{\cb{j}}}}\bigchi_{K\setminus\cb{j} \triangle S,\; \bb{a_{K\setminus\cb{j}\cup S}\cup b_{S\setminus\bb{K\setminus \cb{j}}}}} \prod_{i\in K\setminus\cb{j}\cap S}\bigchi_{\cb{i},a_{\cb{i}}} \bigchi_{\cb{i},b_{\cb{i}}}}_{\Pi}
\end{align*}
Now we apply the triangle's inequality, bounding the sum of $\abs{C_{S, a_S}^{\bb{j,a_{\cb{j}}}}}$ by $\beta$ and the norm of the product of $\bigchi$ functions by the worst possible case. We obtain an upper bound of
$$\beta\cdot \sup_{\hat K, a_{\hat K}, \bb{b_n}, \bb{c_n}}\norm{\bigchi_{\hat K, a_{\hat K}}\prod_{n} \bigchi_{\cb{i_n}, b_n}\cdot \bigchi_{\cb{i_n}, c_n}}_{\Pi},$$
where the product in the last term is over at most~$|K|$ many terms.\medskip

Compared to the simple case (see~\eqref{equ:main_result_proof_simple_case_last_step}), the function in the semi-norm is not automatically an element of the basis. Therefore it is not obvious if the supremum does not exceed one. In the next paragraph, we check that this supremum does not exceed one.\medskip

First, we notice that the representational seminorm has the nice property that for any functions $f$, $g\in\mathbf{F}$ with disjoint domains 
\begin{align}\label{equ:seminorm_submultiplicativity}
\norm{f\cdot g}_{\Pi} \leq \norm{f}_{\Pi}^*\cdot\norm{g}_{\Pi}^*,    
\end{align}
where the $\norm{\;\cdot\;}_{\Pi}^*$ is the same as $\norm{\;\cdot\;}_{\Pi},$ except the constant is not ignored (c.f.~Definition~\ref{def:representational_seminorm}). \smallskip

Indeed, if the representations of these functions in the basis are
$$f = \desum{K\subseteq\text{Dom}(f)}{a_K\in\bb{\mathcal{A}\setminus\{\ubar{a}\}}^K}\alpha_{K, a_K}\cdot\bigchi_{K, a_K}$$
and
$$g = \desum{L\subseteq\text{Dom}(g)}{b_L\in\bb{\mathcal{A}\setminus\{\ubar{a}\}}^L}\beta_{L,b_L}\cdot\bigchi_{L,b_L},$$
then their product is can be written as
$$f \cdot g = \sum \alpha_{K, a_K}\beta_{L, b_L}\cdot \bigchi_{K\cup L, a_K\cup b_L}.$$
The crucial property used here is that the product basis is closed under the multiplication of $\bigchi$ functions with disjoint domains, from which~\eqref{equ:seminorm_submultiplicativity} follows. \medskip

Using the identity~\eqref{equ:seminorm_submultiplicativity} and obseving that the product only has at most~$|K|$ many terms, one obtains the inequality

$$\norm{\bigchi_{K, a_K}\prod_{n} \bigchi_{\cb{i_n}, b_n}\cdot \bigchi_{\cb{i_n}, c_n}}_{\Pi} \leq \sup_{j\in\Lambda}\sup_{a,b\in\mathcal{A}\setminus\{\ubar{a}\}} \left( \norm{\bigchi_{\cb{j},a}\cdot \bigchi_{\cb{j},b}}_{\Pi}^*\right)^{|K|}.$$

It is left to show that
\begin{align}
    \norm{\bigchi_{\cb{j},a}\cdot \bigchi_{\cb{j},b}}_{\Pi}^* \leq 1.
\end{align}
Indeed, this can be achieved by calculating the term explicitly. Without loss of generality assume that $a\geq b > \ubar{a}$. Then
$$\bigchi_{\cb{j},\;a}\cdot \bigchi_{\cb{j},\;b} = x(b)\cdot\bigchi_{\cb{j},\;a} + y(a)\cdot\bigchi_{\cb{j},\;b} -x(b)y(a).$$
There are two cases. If $a=b$ then
$$\norm{\bigchi_{\cb{j},\;a}\cdot \bigchi_{\cb{j},\;b}}_{\Pi}^* = \abs{x(a)+y(a)}+\abs{x(a)y(a)}\leq 1,$$
whenever $x(a)$ and $y(a)$ are bounded by $\pm 1$ and have different signs. If $a>b$ then the inequality required is
$$\abs{x(b)}+\abs{y(a)}+\abs{x(b)y(a)}\leq 1,$$
which follows from our assumption~\ref{equ:main_theo_condition_basis} and completes the argument.
\end{proof}
\mbox{}\\

\begin{proof}[Proof of the Theorem~\ref{pro:main-result}]

As outlined above, the proof is structured in two parts. In the first part, we make the stronger assumption $\beta <1$ instead of ~$\alpha <1$ (see~\eqref{equ:main_theo_condition_contractivity_strong}). We will show that 
\begin{align}\label{proof_main_result_weak}
    | \Cov(f(\zeta_0), g(\zeta_t)) | \leq 2 \ \|f \|_{\infty} \  \|g \|_{\Pi} \ e^{- (1- \beta)t}.
\end{align}
In the second step, using the time-scaling trick we will weaken the assumption to~$\alpha <1$ and improve the inequality~\eqref{proof_main_result_weak} to the desired estimate
\begin{align}\label{proof_main_result_strong}
    | \Cov(f(\zeta_0), g(\zeta_t)) | \leq 2 \ \|f \|_{\infty} \  \|g \|_{\Pi} \ e^{- (1- \alpha)t}.
\end{align}

Let us turn to the verification of~\eqref{proof_main_result_weak}. In the first step, writing the function~$g \in \mathbf{F}$ via the product basis~$\mathcal{F}_{\Pi}$ yields
\begin{align*}
    g = \sum_{K, a_K} C_{K, a_k} \bigchi_{K,a_K},
\end{align*}
where the sum is over finitely many sets~$K$ and tuples~$a_K$. Let us note that all sets~$K$ are non-empty and finite. We therefore get the estimate
\begin{align*}
    \abs{\Cov(f(\zeta_0), g(\zeta_t))} & = \abs{\sum_{K, a_K} C_{K, a_K} \Cov(f(\zeta_0), \bigchi_{K, a_K} (\zeta_t) )} \\
    & \leq \norm{g}_{\Pi} \ \sup_{K, a_K} \abs{\Cov(f(\zeta_0), \bigchi_{K, a_K} (\zeta_t)) }.
\end{align*}

Hence, in order to deduce~\eqref{proof_main_result_weak} it suffices to show that
\begin{align}\label{proof_main_result_NTS}
   \sup_{K, a_K} |\Cov(f(\zeta_0), \bigchi_{K, a_K} (\zeta_t)) | \leq   2\norm{f}_{\infty} e^{-\bb{1-\beta}t}.
\end{align}

 We turn to the verification of~\eqref{proof_main_result_NTS} and consider a finite set~$K_0$ with tuplet~$a_{K_0}$. Let us consider the stopping time~$\pi_{K_0}(t)$ which denotes the most recent time, a site in~$K_0$ got updated (see Definition~\ref{def:time_prev_update}). We observe that
\begin{align*}
    & |\Cov(f(\zeta_0), \bigchi_{K_0, a_{K_0}} (\zeta_t)) | \\
    & \qquad \leq  |\Cov(f(\zeta_0),  \ind(\pi_{K_0}(t) = 0) \ \bigchi_{K_0, a_{K_0}} (\zeta_t)) | + |\Cov(f(\zeta_0),  \ind(\pi_{K_0}(t) > 0) \ \bigchi_{K_0, a_{K_0}} (\zeta_t)) |  \\
    & \qquad  \leq \norm{f}_{\infty} \ \mathbb{P} \left( \pi_{K_0}(t) = 0 \right) + |\Cov(f(\zeta_0),  \ind(\pi_{K_0}(t) > 0) \  \bigchi_{K_0, a_{K_0}} (\zeta_t)) |.
\end{align*}
Let $\mathcal{F}_t$ denote the $\sigma$-algebra generated by all the events up to time $t$. Conditioning on $\mathcal{F}_0$ yields the identity
\begin{align*}
    \Cov(f(\zeta_0),  \ind(\pi_{K_0}(t) > 0) \bigchi_{K_0, a_{K_0}} (\zeta_t)) = \Cov \left( f(\zeta_0),  \mathbb{E} \left[ \ind(\pi_{K_0}(t) > 0) \bigchi_{K, a_K} (\zeta_t) \ | \ \mathcal{F}_0 \right] \right).
\end{align*}
A combination of the last three formulas we arrive at the estimate

\begin{align}
        & \abs{\Cov(f(\zeta_0), \bigchi_{K_0, a_{K_0}} (\zeta_t))} \notag \\ 
        & \qquad \leq 2 \ \norm{f}_{\infty} \ \mathbb{P} \left( \pi_{K_0}(t) = 0 \right) + \left|  \Cov \left( f(\zeta_0),  \mathbb{E} \left[ \ind(\pi_{K_0}(t) > 0) \bigchi_{K_0, a_{K_0}} (\zeta_t) \ | \ \mathcal{F}_0 \right] \right) \right|. \label{proof_main_result_recursion_first_step}
\end{align}
Let us consider the expectation under the covariance. We get that
\begin{align} 
    & \mathbb{E} \left[ \ind(\pi_{K_0}(t) > 0) \bigchi_{K_0, a_{K_0}} (\zeta_t) \ | \ \mathcal{F}_0 \right] \notag \\
    & \qquad = \CE{\CE{\ind(\pi_{K_0} (t)>0)\ \bigchi_{K_0,a_{K_0}}(\zeta_{\pi_{K_0}(t)})}{\mathcal{F}_{\pi_{K_0}(t)^-}}}{\mathcal{F}_0} \notag \\
    & \qquad = \CE{\ind(\pi_{K_0}(t)>0)\ \CE{\bigchi_{K_0,a_{K_0}}(\zeta_{\pi_{K_0}(t)})}{\mathcal{F}_{\pi_{K_0}(t)^-}}}{\mathcal{F}_0} \notag \\
    & \qquad = \CE{\ind(\pi_{K_0}(t)>0)\ E\bb{\bigchi_{K_0,a_{K_0}}}(\zeta_{\pi_{K_0}(t)^-})}{\mathcal{F}_0} \notag\\
    & \qquad = \sum_{K_1, a_{\tilde K_1}} \tilde{C}_{{K_1}, a_{ K_1}} \CE{\ind(\pi_{K_0}(t)>0)\ \bigchi_{ K_1, a_{K_1}} (\zeta_{\pi_{K_0}(t)^-})}{\mathcal{F}_0}   .
    \label{recursion2}
\end{align}
Here, the first equality follows from the law of total expectation. The second equality holds because $\ind(\pi_{K_0}(t)>0)$ is $\mathcal{F}_{\pi_{K_0}(t)^-}$-measurable. The third one holds by the definition of the operator~$E$ (see Definition~\ref{def:update_operators}). For the last identity, we expressed the function~$\E \bb{\bigchi_{K_0,a_{K_0}}}$ using the product basis~$\mathcal{F}_{\Pi}$.\\

Using the formula\eqref{recursion2} and Lemma~\ref{iteration bound} we estimate the second term on the right hand side of~\eqref{proof_main_result_recursion_first_step} as
\begin{align}
    & \left|  \Cov \left( f(\zeta_0),  \mathbb{E} \left[ \ind(\pi_{K_0}(t) > 0) \bigchi_{K_0, a_{K_0}} (\zeta_t) \ | \ \mathcal{F}_0 \right] \right) \right| \notag \\
    & \leq \sum_{K_1 \neq \emptyset , a_{K_1}}  \left| \tilde C_{K_1, a_{K_1}} \right| \ \left|\Cov \left( f(\zeta_0),  \CE{\ind(\pi_{K_0}(t)>0)\ \bigchi_{K_1, a_{K_1}} (\zeta_{\pi_{K_0}(t)^-})}{\mathcal{F}_0} \right)\right| \notag \\
    & \leq  \norm{E\bb{\bigchi_{K_0,a_{K_0}}}}_{\Pi} \sup_{K_1 \neq \emptyset, a_{K_1}} \left| \Cov \left( f(\zeta_0),  \mathbb{E} \left[ \ind(\pi_{K_0}(t) > 0) \bigchi_{K_1 \neq \emptyset , a_{K_1}} ( (\zeta_{\pi_{K_0}(t)^-})) \ | \ \mathcal{F}_0 \right] \right) \right| \notag \\
        & \leq \beta \ \sup_{K_1 \neq \emptyset, a_{K_1}} \left| \Cov \left( f(\zeta_0),  \mathbb{E} \left[ \ind(\pi_{K_0}(t) > 0) \bigchi_{K_1, a_{K_1}} ( (\zeta_{\pi_{K_0}(t)^-})) \ | \ \mathcal{F}_0 \right] \right) \right| . \label{proof_main_result_recursion_first_time_step_1}
\end{align}
Now, let us fix a non-empty but finite set~$K_1$ and introduce the ad-hoc notation~$\pi_0(t) =  \pi_{K_0}(t)$ and $\pi_1 (t) := \bb{\pi_{K_1} \circ \pi_{K_0}}(t) $. Because~$\pi_0 (t) \geq \pi_1 (t)$, the term on the right-hand side of~\eqref{proof_main_result_recursion_first_time_step_1} can be further estimated as
\begin{align}
  &   \left| \Cov \left( f(\zeta_0),  \mathbb{E} \left[ \ind(\pi_{K_0}(t) > 0) \bigchi_{K_1, a_{K_1}} ( (\zeta_{\pi_{K_0}(t)^-})) \ | \ \mathcal{F}_0 \right] \right) \right| \notag \\
  & \qquad \leq     \abs{\cov{f(\zeta_0)}{\CE{\ind(\pi_{1}(t)=0\text{ and }\pi_0(t)>0)}{\mathcal{F}_0}\ \bigchi_{K_{1},a_{K_{1}}}(\zeta_0)}}   \notag \\
  & \qquad \qquad   +
     \abs{\cov{f(\zeta_0)}{\CE{\ind(\pi_{1}(t)>0)\  \bigchi_{K_{1},a_{K_{1}}}(\zeta_{\pi_{1}(t)})}{\mathcal{F}_0}}} \notag \\
    & \qquad \leq 2 \norm{f}_{\infty}\ \mathbb{P} \left( \pi_1(t) =0 \mbox{ and } \pi_0(t) >0 \right) \notag\\
  & \qquad  \qquad +       \abs{\cov{f(\zeta_0)}{\CE{\ind(\pi_{1}(t)>0) \ \bigchi_{K_{1},a_{K_{1}}}(\zeta_{\pi_{1}(t)})}{\mathcal{F}_0}}} . \label{proof_main_result_recursion_first_time_step_2}
\end{align}
Hence, a combination of~\eqref{proof_main_result_recursion_first_time_step_1} and~\eqref{proof_main_result_recursion_first_time_step_2} shows that
\begin{align}
     \sup_{K_0, a_{K_0}} & \left|  \Cov \left( f(\zeta_0),  \mathbb{E} \left[ \ind(\pi_{K_0}(t) > 0) \bigchi_{K_0, a_{K_0}} (\zeta_t) \ | \ \mathcal{F}_0 \right] \right) \right| \notag \\
    & \qquad \leq \beta \ 2 \norm{f}_{\infty}\ \mathbb{P} \left( \pi_1(t) =0 \mbox{ and } \pi_0(t) >0 \right) \notag\\
  & \qquad  \qquad +     \beta   \sup_{K_1, a_{K_1}} \abs{\cov{f(\zeta_0)}{\CE{\ind(\pi_{1}(t)>0) \ \bigchi_{K_{1},a_{K_{1}}}(\zeta_{\pi_{1}(t)})}{\mathcal{F}_0}}} \label{proof_main_result_first_step_completed}.
\end{align}
We observe that the term on the left hand side of~\eqref{proof_main_result_first_step_completed} and the second term on the right hand side is of the same form. This allows us to perform a recursive argument: \smallskip
For a sequence of finite but non-empty sets~$K_0$,~$K_1$, \ldots we introduce the notation
$$\pi_n(t):= \bb{\pi_{K_n}\circ\dots\circ\pi_{K_0}}(t).$$
We observe that~$\ind(\pi_n(t)>0)$ means that there were at least~$n$-updates up to time~$t$, first in the set $K_n$, then in $K_{n-1}$,.... With the same argument as used in~\eqref{proof_main_result_first_step_completed}, we can deduce that 
\begin{align}
     \sup_{K_n, a_{K_n}} & \left|  \Cov \left( f(\zeta_0),  \mathbb{E} \left[ \ind(\pi_{K_n}(t) > 0) \bigchi_{K_n, a_{K_n}} (\zeta_{\pi_n(t)}) \ | \ \mathcal{F}_0 \right] \right) \right| \notag \\
    & \leq \beta \ 2 \norm{f}_{\infty}\ \mathbb{P} \left( \pi_{n+1}(t) =0 \mbox{ and } \pi_n(t) >0 \right) \notag\\
  &   \qquad +     \beta \  \sup_{K_{n+1}, a_{K_{n+1}}}  \abs{\cov{f(\zeta_0)}{\CE{\ind(\pi_{n+1}(t)>0) \ \bigchi_{K_{n+1},a_{K_{n+1}}}(\zeta_{\pi_{n+1}(t)})}{\mathcal{F}_0}}} \label{proof_main_result_step_n_completed}.
\end{align}
Overall, a combination of~\eqref{proof_main_result_recursion_first_step},~\eqref{proof_main_result_first_step_completed} and~\eqref{proof_main_result_step_n_completed} yields the estimate (setting formally~$\pi_{-1} =1$)
\begin{align*}
& \sup_{K_0\neq \vn,\;a_{K_0}}  \abs{\cov{f(\zeta_0)}{\bigchi_{K_0,a_{K_0}}(\zeta_t)}} \\
&  \qquad \leq \sup_{K_0,\dots,K_n\neq \vn}\sum_{m=0}^{n}  2 \ \beta^m\norm{f}_{\infty}\cdot\pP\bb{\pi_m(t)=0\text{ and }\pi_{m-1}(t)>0} \\
& \qquad \quad + \sup_{K_0,\dots,K_n\neq \vn,\;a_{K_n}} \beta^n\cdot\abs{\cov{f(\zeta_0)}{\CE{\ind(\pi_n(t)>0)\cdot \bigchi_{K_{n},a_{K_n}}(\zeta_{\pi_n(t)})}{\mathcal{F}_0}}}.
\end{align*}

Let us now analyze the limit~$n \to \infty$. This last term is bounded by $ 2\beta^n\norm{f}_{\infty}$ and disappears.
Thus we need only concern ourselves with the series of probabilities. For a fixed sequence of sets $(K_n)_{n\geq 0}$ we get
\begin{align*}
    \sum_{m=0}^{n} \beta^m & \cdot\pP\bb{\pi_m(t)=0\text{ and }\pi_{m-1}(t)>0} \\
    & = \sum_{m=0}^{n} \bb{\beta^m-\beta^{m+1}}\cdot\sum_{i\leq m}\pP\bb{\pi_i(t)=0\text{ and }\pi_{i-1}(t)>0} \\
    & \qquad + \beta^{n+1} \cdot\sum_{i\leq n}\pP\bb{\pi_i(t)=0\text{ and }\pi_{i-1}(t)>0}\\
    & \leq \sum_{m=0}^{n} \bb{\beta^m-\beta^{m+1}}\cdot\pP\bb{\pi_m(t)=0} + n\beta^{n+1} \\
    & = \sum_{m=0}^{n} \bb{\beta^m-\beta^{m+1}}\cdot\pP\left( \pi_{K_m}\circ\dots\circ\pi_{K_0}(t)=0 \right) + n\beta^{n+1} .
\end{align*} 
The probabilities are estimated in the following way. The increments $(\pi_{i}(t)-\pi_{i+1}(t))$ are independent exponential random variables with rates~$\abs{K_i}$. Since $K_i\neq \vn$ we may bound the probability by the worst case scenario, where all $K$s are of minimal size $1$, which yields
\begin{align*}
    \sum_{m=0}^{n} & \bb{\beta^m-\beta^{m+1}}  \cdot\pP\bb{\underbrace{\pi_{\cb{j}}\circ\dots\circ\pi_{\cb{j}}}_{m+1}(t)=0} \\
    & \qquad \leq \sum_{m=0}^{n} \bb{\beta^m-\beta^{m+1}}\cdot\sum_{k=0}^m \frac{t^k}{k!}e^{-t} \leq  e^{-\bb{1-\beta}t}.
\end{align*}
Overall, we have obtained the desired estimate~\eqref{proof_main_result_weak} which finishes the first part of the proof.\\

Let us turn to the second part, namely deducing the estimate~\eqref{proof_main_result_strong} under the weaker assumption~$\alpha <1$. The sole difference between~$\alpha$ and~$\beta$ is that in the definition of~$\alpha$ there is one absolute value missing in the coefficients  $C_{\cb{j}, a}^{\bb{j, a}}$ (see~\eqref{equ:main_theo_condition_contractivity_strong}). We observe that the coefficients $C$s are linear functions of the entries of the transition matrix, as they are the solutions to a linear system of equations
$$\desum{S\subseteq\mathcal{N}_j}{a_S\in\bb{\mathcal{A}\setminus{\{\ubar{a}\}}}^S}C_{S,\;a_S}^{\bb{j,a}}\cdot\bigchi_{S,\;a_S}\bb{\sigma} = E_j\bb{\bigchi_{\cb{j},a}}\bb{\sigma}=P_j(a_+\;|\;\sigma)\cdot x(a) + P_j(a_-\;|\;\sigma)\cdot y(a),$$
for every configuration $\sigma$.
For the identity transition matrix, the values of $C$s are $C_{S,\;a_S}^{\bb{j,a}}=1$ for $\bb{S,\;a_S}=\bb{\cb{j},a}$ and $0$ otherwise. To use the \hyperref[pro:time-scaling]{Time-Scaling Lemma} we will find some $\lambda\in (0,1]$ such that
$$1-\lambda + \lambda C_{\cb{j},a}^{\bb{j,a}}\geq 0\text{ for all }j\in\Lambda, a\in\mathcal{A}\setminus\{\ubar{a}\}$$
and
$$\sup_{j\in\Lambda,\; a\in\mathcal{A}\setminus\{\ubar{a}\}}\bb{\abs{1-\lambda + \lambda C_{\cb{j},a}^{\bb{j,a}}} + \desum{S\subseteq \mathcal{N}_j, a_S\in\bb{\mathcal{A}\setminus\{\ubar{a}\}}^S}{\bb{S,\;a_S}\neq \bb{\cb{j},a}}\lambda\abs{C_{S,a_S}^{\bb{j,a}}}} <1.$$
If the first condition is satisfied then the second one is equivalent to the assumption $\alpha <1$.
Of course, such a $\lambda$ exists since $C_{\cb{j},a}^{\bb{j,a}}$ are uniformly bounded.
Set \mbox{$Q = \lambda P + \bb{1-\lambda}I$}.
The bound for \hyperref[def:covdec]{covariance decay} holds for $\text{IPS}(Q)$ with rate $\lambda\alpha$. By the \hyperref[pro:time-scaling]{Time-Scaling Lemma} it also holds for IPS$(P)$ with rate $\alpha$, which concludes the second part of the proof.
\end{proof}

\subsection{Example: The two-stage contact process~}\label{sec:two_stage_contact_process}
This generalization of the original contact process was first introduced by Stephen M. Krone in~\cite{Krone:99}. It was meant as a toy model of population growth with a distinction between young and adult members, where only the adults can spread. It is a homogeneous IPS with parameters $\bb{\lambda, \gamma, \delta}$, which are the rates of respectively spreading to a neighboring site by an adult, maturing to adulthood, and "infant mortality". Besides the last factor, all members of the population die out at a constant rate. For a homogeneous lattice, the two-stage contact process with those parameters is given by a transition matrix

$$P_j(0\;|\;\sigma) = \left\{\begin{array}{cc}
    1-\frac{\lambda}{\beta}\abs{\sigma^{-1}(2)\cap\mathcal{N}_j}, &  \sigma(j)=0 \\
    \frac{1+\delta}{\beta}, & \sigma(j)=1\\
    \frac{1}{\beta}, & \sigma(j) = 2
\end{array}\right.,$$

$$P_j(1\;|\;\sigma) = \left\{\begin{array}{cc}
    \frac{\lambda}{\beta}\abs{\sigma^{-1}(2)\cap\mathcal{N}_j}, &  \sigma(j)=0 \\
    1-\frac{\gamma}{\beta}-\frac{1+\delta}{\beta}, & \sigma(j)=1\\
    0, & \sigma(j) = 2
\end{array}\right.,$$
and
$$P_j(2\;|\;\sigma) = \left\{\begin{array}{cc}
    0, &  \sigma(j)=0 \\
    \frac{\gamma}{\beta}, & \sigma(j)=1\\
    1-\frac{1}{\beta}, & \sigma(j) = 2
\end{array}\right.,$$
where $\beta = 1+\delta + \gamma + \lambda\abs{\mathcal{N}}$, zeroes represent the vacant sites, and ones and twos represent the young and adult members of the population respectively.\\

The ergodicity of this process is equivalent to the population dying out, as $\delta_{0}$ is an invariant measure. We will use Theorem~\ref{pro:main-result} to find a sufficiency condition for that to happen. We use product bases of $\mathbf{F}$ given by
\begin{align*}
     \bigchi_{j,1}(\sigma) = A\cdot\ind\bb{\sigma(j)\geq 1} \qquad \mbox{and} \qquad
      \bigchi_{j,2}(\sigma) = B\cdot\ind\bb{\sigma(j)= 2},
\end{align*}
i.e.~their values will be $x(1)=A$, $x(2)=B$ and $y \equiv 0$. If $A$ and $B$ are bounded by $\pm 1$ then the Conditions $1 - 4$ of Theorem~\ref{pro:main-result} are met. Let us now check when Condition $5$ holds. First, we note that

$$E_j\bb{\bigchi_{j,a}} = x(a)\cdot P_j\bb{a_+\;|\;\sigma},$$
and 
$$\abs{\sigma^{-1}(2)\cap\mathcal{N}_j} = \sum_{i\in\mathcal{N}_j}\frac{1}{B}\bigchi_{i,2}(\sigma).$$
Then we can express indicators of single-site cylinders as
\begin{align*}
 \ind\bb{\sigma(j)=0} & = 1-\frac{1}{A}\bigchi_{j,1},\\
 \ind\bb{\sigma(j)=1} & = \frac{1}{A}\bigchi_{j,1}-\frac{1}{B}\bigchi_{j,2}, \mbox{ and} \\
\ind\bb{\sigma(j)=2} & = \frac{1}{B}\bigchi_{j,2}.
\end{align*}
Let us now determine condition $5$. We observe that
\begin{align*}
    E_j\bb{\bigchi_{j,1}} & = A\bb{1-\frac{1}{A}\bigchi_{j,1}}\frac{\lambda}{\beta}\sum_{i\in\mathcal{N}_j}\frac{1}{B}\bigchi_{i,2}\\
    &\qquad +A\bb{\frac{1}{A}\bigchi_{j,1}-\frac{1}{B}\bigchi_{j,2}}\bb{1-\frac{1+\delta}{\beta}} + A\bb{\frac{1}{B}\bigchi_{j,2}}\bb{1-\frac{1}{\beta}}\\
    &=\bigchi_{j,1}\bb{1-\frac{1+\delta}{\beta}} + \bigchi_{j,2}\bb{\frac{A\lambda}{B\beta}-\frac{\lambda}{B\beta}+\frac{A\delta}{B\beta}}\\
    & \qquad +\sum_{i\in\mathcal{N}_j\setminus\cb{j}} \bigchi_{i,2}\frac{A\lambda}{B\beta} - \bigchi_{\cb{i,j},\bb{2,1}}\frac{\lambda}{B\beta} ,
\end{align*}
and
\begin{align*}
    E_j(\bigchi_{j,2}) & =B\bb{\frac{1}{A}\bigchi_{j,1}-\frac{1}{B}\bigchi_{j,2}}\frac{\gamma}{\beta} + B\bb{\frac{1}{B}\bigchi_{j,2}}\bb{1-\frac{1}{\beta}}\\
    &=\bigchi_{j,1}\frac{B\gamma}{A\beta} +\bigchi_{j,2}\bb{1-\frac{1}{\beta}-\frac{\gamma}{\beta}}.
\end{align*}
Then Condition $5$ is equivalent to theinequalities:
\begin{align}
     1-\frac{1+\delta}{\beta} + \abs{\frac{A\lambda}{B\beta}-\frac{\lambda}{B\beta}+\frac{A\delta}{B\beta}} + \bb{\abs{\mathcal{N}}-1}\frac{\bb{1+\abs{A}}\lambda}{\abs{B}\beta} & < 1, \mbox{ and}\label{eq:two_stage_condition_one}\\
     \abs{\frac{B\gamma}{A\beta}}+1-\frac{1}{\beta}-\frac{\gamma}{\beta} & <  1.\label{eq:two_stage_condition_two}
\end{align}

Our goal now is to find $A$ and $B$ such that this condition is satisfied. We may choose any $A, B\in[-1,1]\setminus\cb{0}$. We note that the sign of $B$ does not matter. Choosing $A$ of the same sign as $B$ minimizes the third term in (\ref{eq:two_stage_condition_one}) without affecting anything else. Hence we may restrict our search to positive $A$, $B$. It's clear that whatever values $A$, $B$ we choose we may relax (\ref{eq:two_stage_condition_one}) by increasing $B$. There is a bound on $B$ imposed by (\ref{eq:two_stage_condition_two}) and $\abs{B}\leq 1$ i.e.

$$\abs{B}< \min\bb{1,\;A\frac{1+\gamma}{\gamma}}.$$
We may proceed with $B = \min\bb{1,\;A\frac{1+\gamma}{\gamma}}$ since if such $B$ permits a choice of $A$ satisfying  (\ref{eq:two_stage_condition_one}) then by decreasing $B$ slightly we will arrive at a pair $(A,B)$ satisfying both inequalities. To find the optimal $A$ is now just a question of algebra. Under the assumption that $\abs{\mathcal{N}}\geq 2$ the optimal $A$ is given by $\max\bb{\frac{\gamma}{1+\gamma}, \frac{\lambda}{\lambda + \delta}}$. The case $\abs{\mathcal{N}}=1$ may be disregarded since for $\abs{N}=1$ the IPS has no interactions between particles. We observe that this value of $A$ forces $B$ to equal one. Hence the condition we arrive at is

    $$\max\bb{\frac{\gamma}{1+\gamma}, \frac{\lambda}{\lambda + \delta}} < \frac{1+2\lambda + \delta -  \lambda\abs{\mathcal{N}}}{\delta + \lambda\abs{\mathcal{N}}}.$$

If this inequality holds then by Theorem~\ref{pro:main-result} the two-stage contact process with parameters $\bb{\lambda,\gamma,\delta}$ dies out.  
\bigskip

{}

\section{Positive rates conjecture for one-sided nearest-neighbor interactions} \label{sec:application}

In this section we study an open problem, namely the positive rates conjecture for IPS with an alphabet of size two and one-sided nearest-neighbor interactions. Despite G\'acs counterexample, we believe the PRC holds for "simple" enough IPS. His construction required the alphabet and interaction range to be on the order of $2^{300}$ to generate a phase transition. The IPS we will be dealing with lack sufficient complexity for non-ergodic behavior. Additionally, Głuchowski and Miekisz have carried out extensive simulations of these simple IPS looking for any counterexamples among them. They have simulated the standard coupling between trajectories of these IPS starting from different initial configurations. The results suggest that their disagreement does not percolate through spacetime implying \hyperref[def:ergodicity]{ergodicity}. Section~\ref{sec:one_d_ips} summarizes the findings of this numerical study, categorizing the different types of behavior. \\

When studying ergodicity it is natural to consider IPS with an alphabet of size two and one-sided nearest-neighbor interactions. All simpler IPS are ergodic. If the lattice is finite then the assumption of positive rates implies ergodicity, as the PCA or IPS is a finite, irreducible Markov chain.
Similarly, if the neighborhood is smaller, i.e.~$\abs{\mathcal{N}_j}\leq1$ for all $j\in\Lambda$, the resulting PCA or IPS is ergodic, irrespective of the underlying lattice structure. Indeed, Theorem~\ref{pro:leontovitch} yields that conclusion for PCAs and Theorem~\ref{pro:main-result} for IPS (using the product basis with values $x=1$, $y=-1$). This leaves $\bb{\Lambda,\mathcal{N}} = \bb{\Z,\cb{0,1}}$ as the simplest lattice for which it is not known whether positive rates imply ergodicity, even for alphabets of size two. \\

Ergodicity of this class of IPS was studied by Toom et al. \cite{Toom:90} applying a variety of methods including Theorem~\ref{pro:leontovitch} by Leontovitch \& Vaserstein. To be more precise, they considered the following class of PCAs:\\

\begin{itemize}\label{item:conditions}
    \item The lattice is $\bb{\Lambda,\mathcal{N}} = \bb{\Z,\cb{0,1}}$.
    \item The alphabet $\mathcal{A}=\cb{0,1}$.
    \item The transition matrix $P$ is homogeneous and has positive rates.
    \item The transition matrix is additive (only depends on the sum of states in the interaction neighborhood), which in this case means
    $$P\bb{1\;|\;10} = P\bb{1\;|\;01}.$$
    (This is a different property than Griffeath's "additive".)\\
\end{itemize}
    The sum of their methods proved uniform exponential \hyperref[def:covdec]{covariance decay} for roughly $90$\% of the parameter space, but is missing parts of the low-noise regime. They still conjectured that this class of PCA is ergodic. Instead of PCAs, we consider the same class of IPS in continuous time, i.e.~choosing update clocks to be iid.~$\mbox{Exp}(1)$. As a corollary of Theorem~\ref{pro:main-result}, we show below that the whole class is ergodic. 
    It is not sufficient to use the less powerful assumption~\eqref{equ:main_theo_condition_contractivity_strong} instead of~\eqref{eq_contractivity_gen_leontovitch}. Therefore the time-scaling trick substantially increased the power of the original criterion of Leontovitch and Vaserstein in the continuous time setting. \\

\begin{coro}\label{compare}
    An IPS with the properties above is uniformly exponentially \hyperref[def:ergodicity]{ergodic}.
\end{coro}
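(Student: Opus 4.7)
The plan is to apply Theorem~\ref{pro:main-result} with a product basis whose values $(x,y)$ are chosen adaptively as a function of the parameters. Since the alphabet has size two, Condition~\ref{equ:main_theo_condition_basis} is vacuous; restricting the product-basis values to $(x,y)\in(0,1]\times[-1,0]$ makes Conditions~\ref{equ:main_theo_condition_basis_representation_1} and~\ref{equ:main_theo_condition_basis_representation_2} automatic, and homogeneity of $P$ disposes of Condition~\ref{equ:main_theo_condition_constants_bounded}. The task therefore reduces to exhibiting, for every triple $(p_{1|11},p_{1|10},p_{1|00})$ in the positive-rates regime $[0,1)\times(0,1)\times(0,1]$, some choice of $(x,y)$ for which the contractivity coefficient $\alpha$ of Condition~\ref{equ:main_theo_condition_constractivity} is strictly below one.

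The first step is a closed-form computation of the representational coefficients of the mean-update operator $E_0\bb{\bigchi_{\cb{0}}}$. Additivity $p_{1|01}=p_{1|10}$ forces $C_{\cb{0}}=C_{\cb{1}}=:C_1$; writing $\Delta:=p_{1|11}-p_{1|00}$, $\Sigma:=p_{1|11}+p_{1|00}-2p_{1|10}$ and $\kappa:=(x+y)/(x-y)\in[-1,1]$, the linear system yields
\begin{align*}
    C_{\cb{0,1}}=\frac{\Sigma}{x-y},\qquad C_1=\frac{1}{2}\bb{\Delta-\Sigma\kappa},
\end{align*}
with $C_\emptyset$ obtained by substitution into $q_{00}=y+p_{1|00}(x-y)$. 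The decisive observation, and the precise place where the time-scaling refinement (using~\eqref{eq_contractivity_gen_leontovitch} rather than~\eqref{equ:main_theo_condition_contractivity_strong}) is indispensable, is that whenever $(x,y)$ can be chosen so that $C_1\le0$, the contractivity coefficient collapses to $\alpha=|C_\emptyset|+|C_{\cb{0,1}}|$, because the signed $C_1$ cancels against $|C_1|$.

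The parameter space is partitioned into three regimes in which $(x,y)$ is selected accordingly:
\begin{itemize}
\item If $p_{1|11}\le p_{1|00}$, the canonical basis $(x,y)=(1,-1)$ already has $C_1=\Delta/2\le0$; a four-way case inspection on the signs of $C_\emptyset$ and $C_{\cb{0,1}}$ reduces $\alpha$ in each subcase to one of $p_{1|11}+p_{1|00}-1$, $1-2p_{1|10}$, $2p_{1|10}-1$, or $1-p_{1|11}-p_{1|00}$, each strictly below one by positive rates.
\item If $p_{1|11}>p_{1|00}$ and $p_{1|10}\notin[p_{1|00},p_{1|11}]$, then $|\Delta/\Sigma|\le1$, so setting $\kappa=\Delta/\Sigma$ forces $C_1=0$. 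Taking the maximal admissible scale $x-y=2/(1+|\kappa|)$ (which places $\max(|x|,|y|)=1$) yields the clean identity $|C_{\cb{0,1}}|=|p_{1|11}-p_{1|10}|$ together with $|C_\emptyset|=|p_{1|10}(1-p_{1|10})-p_{1|00}(1-p_{1|11})|/|p_{1|11}-p_{1|10}|$; the inequality $\alpha<1$ then reduces to $(p_{1|11}-p_{1|10})(1-p_{1|11}+p_{1|10})>|p_{1|10}(1-p_{1|10})-p_{1|00}(1-p_{1|11})|$, which follows from the factorization $(p_{1|11}-p_{1|10})(1-p_{1|11}+p_{1|10})-[p_{1|10}(1-p_{1|10})-p_{1|00}(1-p_{1|11})]=(1-p_{1|11})\Sigma$ and its mirror, both strictly positive by positive rates.
\item If $p_{1|11}>p_{1|00}$ and $p_{1|10}\in[p_{1|00},p_{1|11}]$, then $C_1>0$ is unavoidable and the time-scaling cancellation is not available. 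We instead use $(x,y)=(1,0)$ when $p_{1|10}\le(p_{1|11}+p_{1|00})/2$, which by direct substitution gives $\alpha=p_{1|11}<1$, and the mirror choice $(x,y)=(0,-1)$ when $p_{1|10}\ge(p_{1|11}+p_{1|00})/2$, which gives $\alpha=1-p_{1|00}<1$.
\end{itemize}

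The main obstacle is that no single choice of $(x,y)$ yields $\alpha<1$ uniformly on the parameter space, so one must tune the basis adaptively across the three regimes above; the strict positive-rates inequalities $p_{1|11}<1$, $p_{1|00}>0$, $0<p_{1|10}<1$ provide exactly the slack needed in each regime to conclude strict inequality. The indispensability of the time-scaling refinement emphasized just before the corollary is visible in the first regime: already at $(p_{1|11},p_{1|10},p_{1|00})=(0,1/2,1)$ one computes $\beta=1$ for every admissible $(x,y)$, whereas $\alpha=0$ for the basis $(1,-1)$.
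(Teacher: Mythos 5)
Your proof is correct, and it takes a genuinely different route from the paper's. The paper applies the Time-Scaling Lemma \emph{twice}: first explicitly, to project the four-dimensional parameter cube along lines through the identity $(1,1,0,0)$ onto the opposite faces, reducing (after the $0\leftrightarrow 1$ symmetry) to the two cases $p_{1|11}=0$ and $p_{1|10}=0$, where the additivity constraint relaxes to $p_{1|10}\le p_{1|01}$; and second implicitly, through the signed condition~\eqref{eq_contractivity_gen_leontovitch}. It then gets by with three fixed bases, $(1,-1)$, $(1,-p_{1|00})$ and $(-p_{1|00},1)$. You skip the face projection entirely and work in the full three-dimensional additive slice, compensating by an adaptive choice of $(x,y)$ over three regimes; the ingredients you rely on instead are the identity $C_{\cb{0}}=C_{\cb{1}}$ forced by additivity (which the paper cannot use after its projection) and the cancellation $C_{\cb{0}}+\abs{C_{\cb{1}}}=0$ when $C_{\cb{0}}\le 0$ --- I checked your coefficient formulas, the factorization $(p_{1|11}-p_{1|10})(1-p_{1|11}+p_{1|10})-[p_{1|10}(1-p_{1|10})-p_{1|00}(1-p_{1|11})]=(1-p_{1|11})\Sigma$, and the regime-3 identities $\alpha=p_{1|11}$ and $\alpha=1-p_{1|00}$, and they are all correct. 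What your route buys is explicit contraction rates in the original parameters with no bookkeeping of how the projection rescales the rate, and a transparent display of exactly where additivity and the signed coefficient enter; what the paper's route buys is that each residual case is a one-line computation with a single fixed basis. Two cosmetic blemishes, neither affecting correctness: your identity $\abs{C_{\cb{0,1}}}=\abs{p_{1|11}-p_{1|10}}$ holds only in the sub-case $p_{1|10}<p_{1|00}$ of regime 2 (the mirror sub-case gives $\abs{C_{\cb{0,1}}}=p_{1|10}-p_{1|00}$, as your ``and its mirror'' tacitly acknowledges), and the basis $(x,y)=(0,-1)$ used in regime 3 lies outside your announced range $(0,1]\times[-1,0]$, though it still satisfies Conditions 1--3 of Theorem~\ref{pro:main-result} since only $x\neq y$, boundedness by $\pm 1$ and $xy\le 0$ are required.
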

\begin{proof}[Proof of Corollary~\ref{compare}]
    
First, we give the parameters of an IPS more convenient names, namely
$$p_{1|11}:=P(1\;|\;11),\, p_{1|10}:=P(1\;|\;10),\, p_{1|01}:=P(1\;|\;01),\, p_{1|00}:=P(1\;|\;00).$$
Second, one can reduce the parameter space from a four-dimensional hypercube to four three-dimensional cubes by projecting each point $(p_{1|11},\,p_{1|10},\,p_{1|01},\,p_{1|00})$ along the line joining it with $(1,1,0,0)$ (the identity) onto one of the four faces opposite to $(1,1,0,0)$.
By the \hyperref[pro:time-scaling]{Time-Saling Lemma} uniform, exponential \hyperref[def:ergodicity]{ergodicity} is preserved along those lines, with only the rate of decay changing.
Thus we may without loss of generality assume that $p_{1|11}=0$ or $p_{1|10}=0$ or $p_{1|01}=1$ or $p_{1|00}=1$. Further, the cases $p_{1|11}=0$, $p_{1|00}=1$ and $p_{1|10}=0$, $p_{1|01}=1$ are equivalent by renaming the state $0$ to $1$ and vice versa. In total, we may assume that $p_{1|11}=0$ or $p_{1|10}=0$.
The assumption of \hyperref[positive rates]{positive rates} remains unaffected i.e.
$$p_{1|11}<1,\quad p_{1|10}<1,\quad p_{1|01}>0,\quad p_{1|00}>0.$$
The additive parametrs $\cb{p_{1|10}=p_{1|01}}$ map onto the set $\cb{p_{1|10}\leq p_{1|01}}$.\\

\textit{Case 1: }$p_{1|11}=0.$\smallskip

We verify by direct calculation that for the choice of basis with values $x=1$, $y=-1$ the update coefficients are
\begin{align*}
& C_{\vn} = \frac{p_{1|10}+p_{1|01}+p_{1|00}}{2}-1, \;  C_{\cb{0}} = \frac{p_{1|10}-p_{1|01}-p_{1|00}}{2}, \; \mbox{and} \\
& C_{\cb{1}}=\frac{-p_{1|10}+p_{1|01}-p_{1|00}}{2}, \; C_{\cb{0,1}} = \frac{-p_{1|10}-p_{1|01}+p_{1|00}}{2}.
\end{align*}
The condition
$$\abs{C_{\vn}} + C_{\cb{0}} + \abs{ C_{\cb{1}}} + \abs{ C_{\cb{0,1}}} <1 $$
is in this case equivalent to the assumption of \hyperref[positive rates]{positive rates} and $p_{1|10}< p_{1|01}+p_{1|00}$.
This last requirement is met when $p_{1|10} \leq p_{1|01}$ and $p_{1|00}>0$.

\begin{figure}[H]
        \centering
        \includegraphics[scale=0.175]{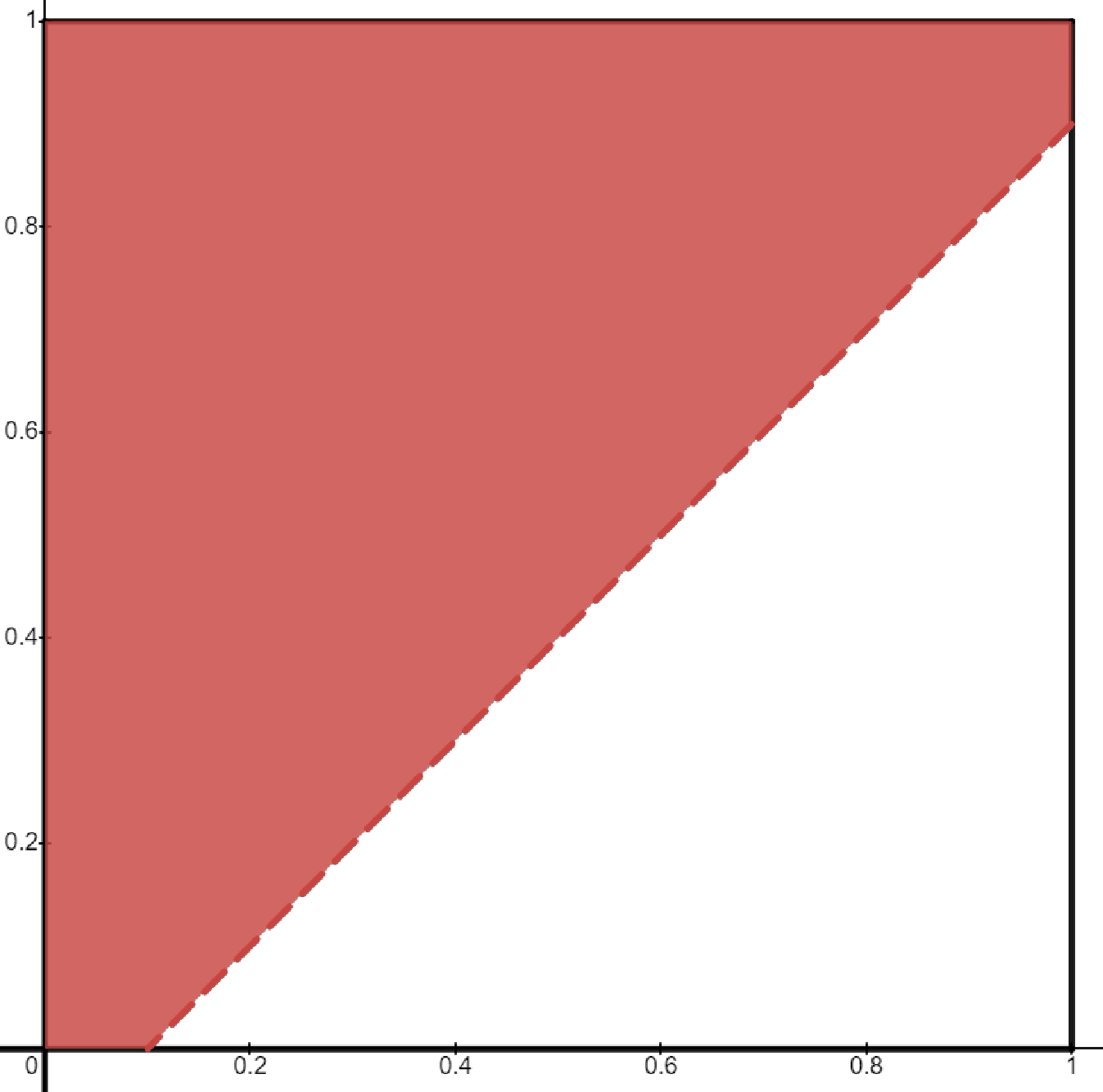}
        \caption{Region of the parameter space where the inequality holds. Parameters $p_{1|10}$ and $p_{1|01}$ vary over $X$ and $Y$ axis respectively. The value of $p_{1|00}$ is set to $0.1$.}
    \end{figure}

\textit{Case 2: }$b=0$.\smallskip

To tackle this case we will use two different (albeit similar) bases. These are the bases with values $x=1$, $y=-p_{1|00}$ and $x=-p_{1|00}$, $y=1$.
Together their respective regions cover the entire $\cb{p_{1|10}=0}$ cube, except for some points on the border $\cb{p_{1|11}=1}\cup \cb{p_{1|00}=0}$ where the \hyperref[positive rates]{positive rates} assumption fails.
\end{proof}
\begin{figure}[H]
        \centering
        \includegraphics[scale=0.175]{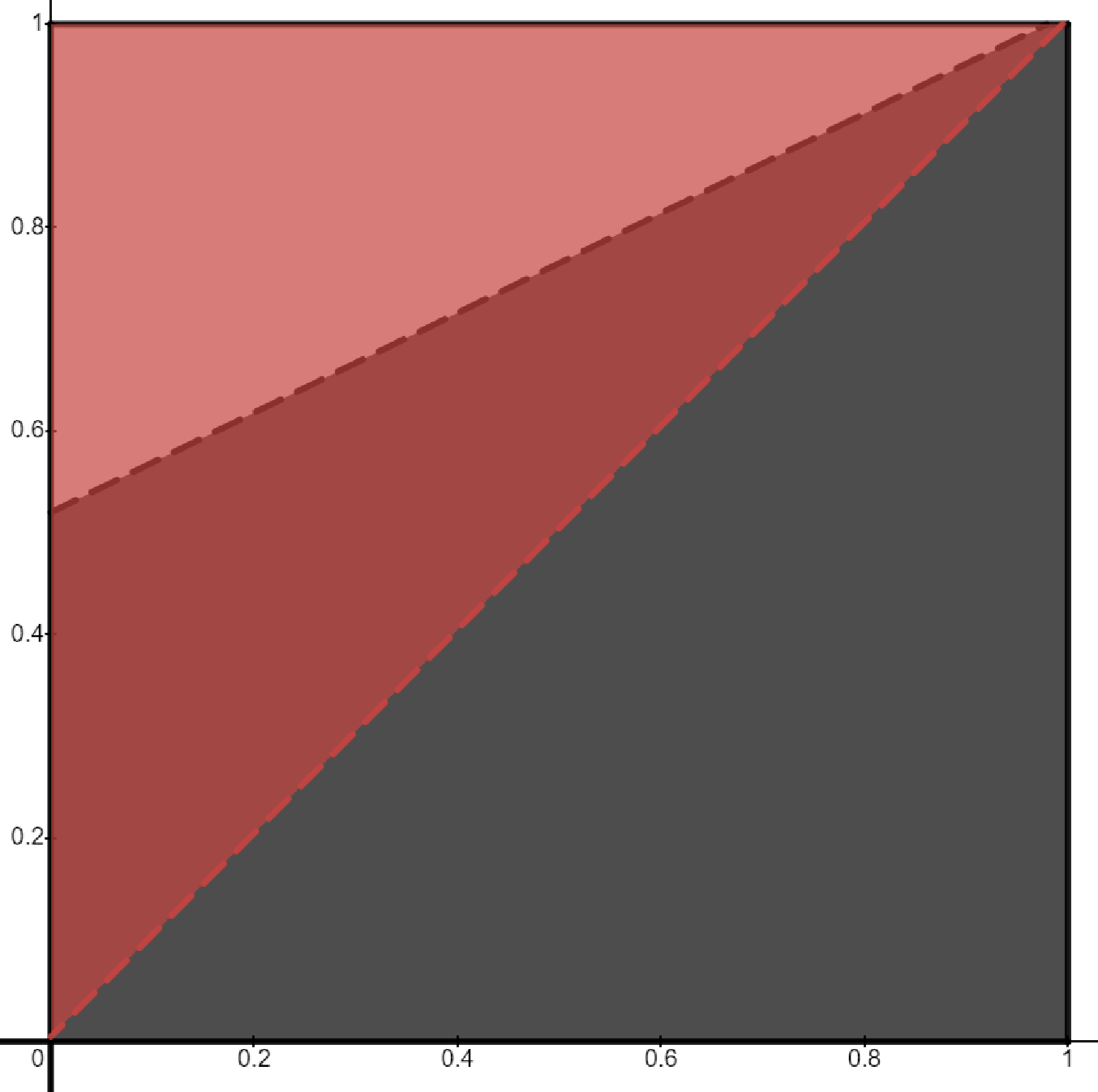}
        \caption{Regions of the parameter space where the criteria for these bases hold (black for the former, red for the latter). Parameters $p_{1|11}$ and $p_{1|01}$ vary over $X$ and $Y$ axis respectively. The value of $p_{1|00}$ is set to $0.01$. For $p_{1|00}>0.25$, the first base criterion fills the entire square.}
\end{figure}

One might be curious if the results now known are sufficient to prove that positive rates imply \hyperref[def:ergodicity]{ergodicity} in the case $\mathcal{A} = \cb{0,1}$, $\mathcal{N} = \cb{0,1}$, with no further restrictions. In the example above the problem was reduced by a dimension and solved in all cases except when $p_{1|11}=0$ and $p_{1|10} \geq p_{1|01}+p_{1|00}$. 
Unfortunately, our efforts fall short. Trying out different bases in Theorem~\ref{pro:main-result}  extends the covered region of parameters slightly, but there are points for which no product base will work. For example for $p_{1|11}=0$, $p_{1|10}=0.99$, $p_{1|01}=0.05$, $p_{1|00}=0.01$ there are no solutions in $x\in \R\setminus\cb{0}$, $y\in \R$ to the inequality
$$\abs{C_{\vn}} + C_{\cb{0}} + \abs{ C_{\cb{1}}} + \abs{ C_{\cb{0,1}}} <1.$$
We have also experimented with making the product basis periodic, as in choosing different values for $\bigchi_j$ depending on the parity of $j\in\Z$, but that proved fruitless.
\begin{figure}[H]
        \centering
        \includegraphics[scale=0.175]{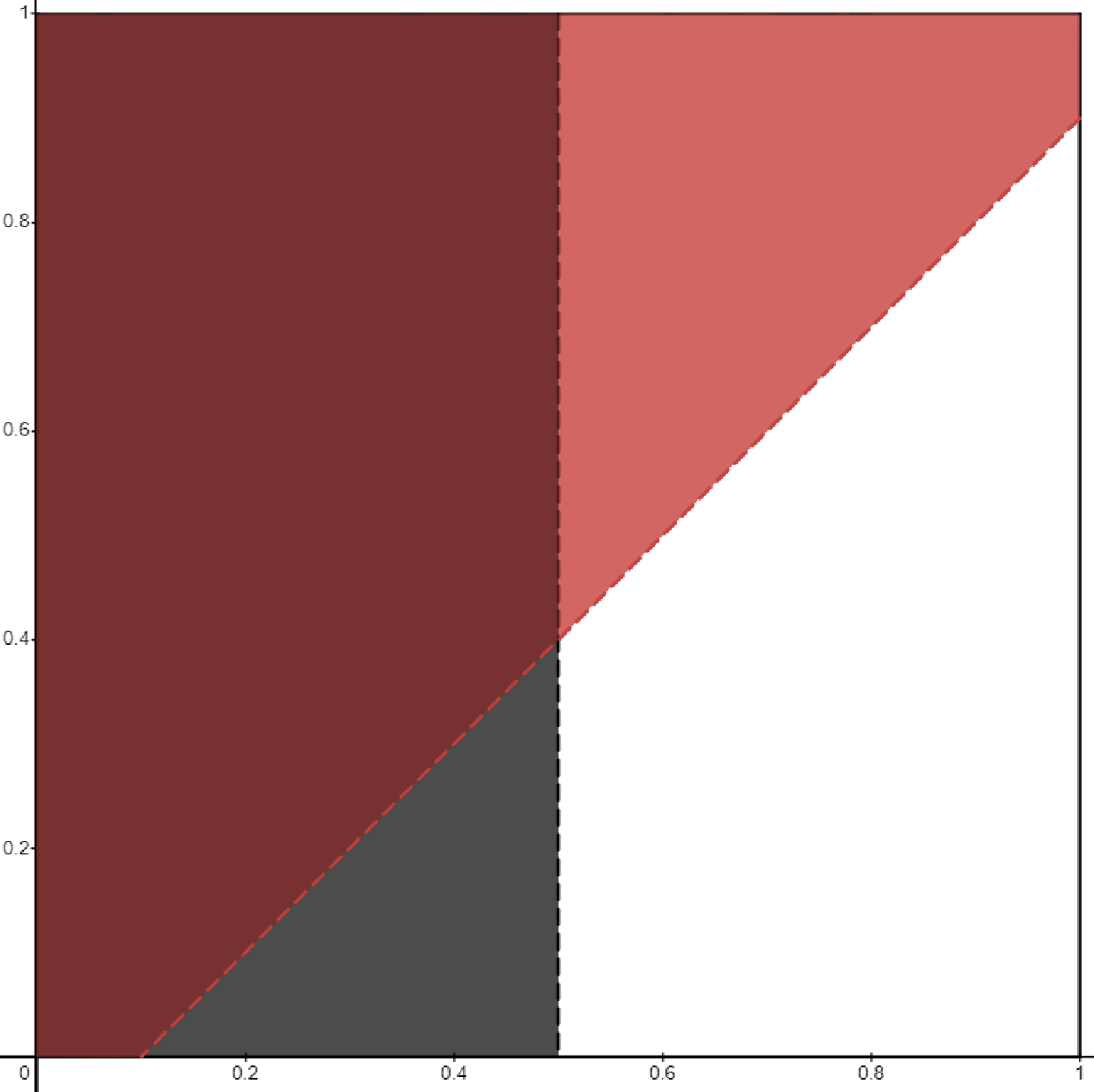}
        \caption{Using Theorem~\ref{pro:main-result} with $x=1$ and $y\in [-1,0]$ one can additionally prove \hyperref[def:ergodicity]{ergodicity} for the cases $\cb{p_{1|11}=0, p_{1|10}<\frac{1}{2}, p_{1|01}>0, p_{1|00}>0}$, but our theorem is powerless in the neighborhood of $\text{IPS}(0,1,0,0)$. Parameters $p_{1|10}$ and $p_{1|01}$ vary over $X$ and $Y$ axis respectively. The value of $p_{1|00}$ is set to $0.1$.}
\end{figure}

It is not entirely surprising that this method fails to deduce \hyperref[def:ergodicity]{ergodicity} on the entire parameter space. Theorem~\ref{pro:main-result} is blind to the structure of $\Lambda$ and in particular does not take into account its dimension. There are no known counterexamples of "positive rates imply ergodicity" among nearest-neighbor IPS on $\Z$, but there are simple ones on $\Z^2$. A classic example is the Stochastic Ising Model - see chapter IV of Ligett's textbook on IPS \cite{Ligett:05}. Since dimension seems to be critical we should expect that one needs to strongly exploit the one-dimensionality of $\Z$ heavily to cover the entire parameter space. Gray's theorem about monotone IPS i.e.~Theorem~\ref{pro:monoextend} allows to deduce \hyperref[def:ergodicity]{ergodicity} for parts of the remaining region $$\cb{p_{1|11}=0, p_{1|10}\approx 1, p_{1|01}\approx 0, p_{1|00}\approx 0}.$$ Not suprisingly, the proof of~Theorem~\ref{pro:monoextend} strongly uses the dimension of $\Z$, namely path crossing properties of a two-dimensional spacetime. AS a corollary we deduce the following statement.  \\

\begin{coro}\label{using gray}
    A homogeneous IPS$(P)$ on lattice $\bb{\Z,\cb{0,1}}$ with alphabet $\cb{0,1}$ and parameters that satisfy $p_{1|11}\leq p_{1|10}<1$ and $0<p_{1|01}\leq p_{1|00}$ is exponentially \hyperref[def:ergodicity]{ergodic}. 
\end{coro}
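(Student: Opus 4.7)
The plan is to reduce the claim to Theorem~\ref{pro:monoextend} by precomposing with an ``every other site flip'' involution that converts the assumed anti-monotone structure of $P$ into a spatially $2$-periodic, weakly monotone one. Concretely, define
\[
\Phi:\{0,1\}^{\Z}\to\{0,1\}^{\Z},\qquad (\Phi\zeta)(j)=\begin{cases}\zeta(j),&j\text{ even},\\ 1-\zeta(j),&j\text{ odd.}\end{cases}
\]
This $\Phi$ is an involution and a homeomorphism for the product topology. If $(\zeta_t)_{t\geq 0}\in\mathrm{IPS}(P)$ is built from the clock family on $\Z$ as in Definition~\ref{def:general construction}, then $\tilde\zeta_t := \Phi\zeta_t$ is an IPS driven by the \emph{same} clock family, with a new, spatially $2$-periodic transition matrix $\tilde P$ that I would read off directly from the construction.

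Next I would compute $\tilde P$ explicitly. Since $\mathcal{N}_j=\{j,j+1\}$ and exactly one of $j,j+1$ is odd, a short case analysis gives $\tilde P_j(1\mid a,b) = P(1\mid a,1-b)$ for even $j$ and $\tilde P_j(1\mid a,b) = 1-P(1\mid 1-a,b)$ for odd $j$. Evaluating on the four configurations, the even-site rows are $(\tilde P_j(1\mid 0,0),\tilde P_j(1\mid 0,1)) = (p_{1|01},p_{1|00})$ and $(\tilde P_j(1\mid 1,0),\tilde P_j(1\mid 1,1)) = (p_{1|11},p_{1|10})$, while the odd-site rows are $(1-p_{1|10},1-p_{1|11})$ and $(1-p_{1|00},1-p_{1|01})$. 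In all four cases the second entry dominates the first precisely under the hypotheses $p_{1|11}\leq p_{1|10}$ and $p_{1|01}\leq p_{1|00}$; equivalently, for each fixed value of $\tilde\zeta(j)$ the function $\tilde\zeta(j+1)\mapsto \tilde P_j(1\mid \tilde\zeta(j),\tilde\zeta(j+1))$ is non-decreasing, which is exactly weak monotonicity in the sense of Definition~\ref{def:weak monotonicity}. Positive rates transfer too, since $p_{1|11}\leq p_{1|10}<1$ and $0<p_{1|01}\leq p_{1|00}$ keep every entry of $\tilde P$ strictly away from the corresponding identity row.

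With these checks in hand, Theorem~\ref{pro:monoextend} applies to the periodic rule $\tilde P$ and gives ergodicity of $\mathrm{IPS}(\tilde P)$ with a unique attractive invariant measure $\tilde\mu$. Because $\Phi$ is a homeomorphism commuting with the whole construction, ergodicity (and any exponential rate of convergence) passes through $\Phi$: for every initial distribution $\mu$ one has $\mu P_t = \Phi_*\bigl((\Phi_*\mu)\tilde P_t\bigr) \Rightarrow \Phi_*\tilde\mu$, with the same rate, because $\Phi$ induces a bijection on finite cylinder sets. Combining this with Lemma~\ref{pro:ergodicity_equiv_covdec} yields the exponential ergodicity of $\mathrm{IPS}(P)$.

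Main obstacle: the argument hinges on Theorem~\ref{Gray} (and hence Theorem~\ref{pro:monoextend}) being available for a \emph{periodic} rather than strictly homogeneous rule; fortunately Gray's theorem is already stated at precisely that level of generality. The only delicate bookkeeping is verifying that $\Phi\zeta_t$ really is an IPS for $\tilde P$ on the \emph{same} exponential clocks (no time rescaling, no clock relabelling), so that the exponential rate obtained for $\mathrm{IPS}(\tilde P)$ transfers back without modification.
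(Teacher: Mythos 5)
Your proposal is correct and follows essentially the same route as the paper's own proof: the identical every-other-site flip, the same computation of the resulting $2$-periodic transition matrix and verification of weak monotonicity and positive rates, the same appeal to Gray's theorem via Theorem~\ref{pro:monoextend}, and the same transfer of (exponential) ergodicity back through the flip using that it preserves local functions and cylinder sets.
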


\begin{proof}[Proof of Corollary~\ref{using gray}]
The strategy of the proof follows the idea of Gray found in Section $3$ of \cite{Gray:82}. The IPS$(P)$ is not itself \hyperref[def:weak monotonicity]{weakly monotone}. However, if we flip $0$s and $1$s at every other site the resulting $\text{IPS}(Q)$ will have a periodic transition matrix with the same parameters as $P$, just in a different order. While it may not be \hyperref[def:monotonicity]{monotone} (and in the case $a=0$ it cannot) the inequalities we imposed guarantee that it will be \hyperref[def:weak monotonicity]{weakly monotone} and have positive rates. Then by Theorem~\ref{pro:monoextend} this new $\text{IPS}(Q)$ will be exponentially \hyperref[def:ergodicity]{ergodic}. But of course, renaming states doesn't change the underlying dynamics, so the original IPS$(P)$ must also be exponentially \hyperref[def:ergodicity]{ergodic}. \\

Let us turn to the formal proof: Let $\zeta \in \text{IPS}(P)$ be a trajectory constructed as in Definition~\ref{def:general construction}. We will define a modification of this trajectory
$$\xi_t(j) = \left\{\begin{array}{lc}
     \zeta_t(j), & j=0\bb{\text{mod }2},  \\
     1-\zeta_t(j), & j=1\bb{\text{mod }2},
\end{array}\right. \qquad \text{for all }j\in\Z\text{ and }t\geq 0.$$
Clearly if $t\in\left[\tau_n^j, \tau_{n+1}^j\right)$ then $\xi_t(j) = \xi_{\tau_n^j}(j)$ because $\zeta$ satisfies this condition and $\xi_t$ depends only on $\zeta_t$. Additionally $\xi$ evolves according to a transition matrix $Q$ given by
\begin{align*}
    Q_j\bb{z\;|\;x,y} & := \pP\bb{\xi_{\tau_n^j}=z\;|\;\xi_{\tau_n^{j-}}(j,j+1)=(x,y)}\\
    \mbox{} & \\
    &=\left\{\begin{array}{lc}
     \pP\bb{\zeta_{\tau_n^j}=z\;|\;\zeta_{\tau_n^{j-}}(j,j+1)=(x,1-y)}, & j=0\bb{\text{mod }2}, \\
     \pP\bb{\zeta_{\tau_n^j}=1-z\;|\;\zeta_{\tau_n^{j-}}(j,j+1)=(1-x,y)}, & j=1\bb{\text{mod }2}
\end{array}\right.\\
\mbox{} & \\
& = \begin{cases}
    P_j\bb{z\;|\;x,1-y}, & j=0\bb{\text{mod }2}, \\
    P_j\bb{1-z\;|\;1-x,y}, & j=1\bb{\text{mod }2}.
\end{cases}
\end{align*}

Thus $\xi$ is by Definition~\ref{def:IPS} a trajectory of $\text{IPS}(Q)$. The matrices $Q_j$ have parameters $(p_{1|10},p_{1|11},p_{1|00},p_{1|01})$ when $j$ is even and $(1-p_{1|01},1-p_{1|00},1-p_{1|11},1-p_{1|10})$ when $j$ is odd. Their \hyperref[def:weak monotonicity]{weak monotonicity} and positive rates are equivalent to the conditions we set. By Theorem~\ref{pro:monoextend} the $\text{IPS}(Q)$ is exponentially \hyperref[def:ergodicity]{ergodic}. Equivalently it has exponential covariance decay. But clearly, if covariances decay for trajectory $\xi$ then they decay at the same rate for trajectory $\zeta$, because for any $f\in \mathbf{F}$ the function $$\sigma \mapsto f\bb{\left\{\begin{array}{lc}
    \sigma(j), &  j=0\bb{\text{mod }2}\\
   1-\sigma(j),  & j=1\bb{\text{mod }2}
\end{array}\right.}$$
is also a member of $\mathbf{F}$ (this transformation does not change $\text{Dom}(f)$). Thus IPS$(P)$ has exponential covariance decay. Equivalently it is exponentially \hyperref[def:ergodicity]{ergodic}.
\end{proof}
\begin{figure}[H]\label{fig:final_tally}
        \centering
        \includegraphics[scale=0.175]{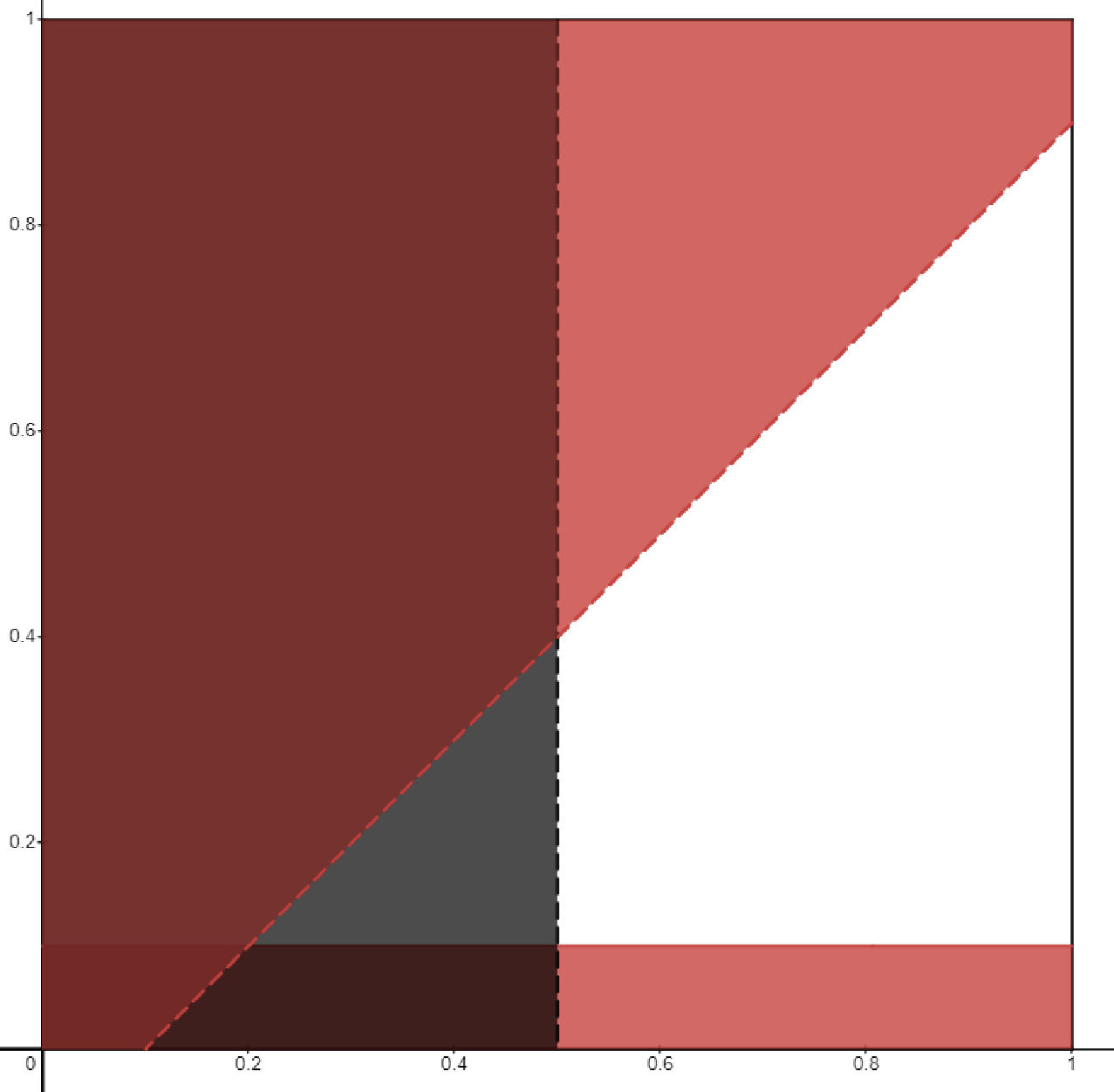}
        \caption{Union of regions in case $\cb{p_{1|11}=0}$ for which we have a proof of \hyperref[def:ergodicity]{ergodicity}. The lower red bar is the region covered in Example~\ref{using gray}. Parameters $p_{1|10}$ and $p_{1|01}$ vary over $X$ and $Y$ axis respectively. The value of $p_{1|00}$ is set to $0.1$.}
\end{figure}

We have been unable to prove \hyperref[def:ergodicity]{ergodicity} in the remaining region. The troublemaker IPS seem to be the "Walls IPS" that we described in Section \ref{subsec:walls}. What these IPS do is try to preserve homogenous blocks of one state while the other is preserved only in the presence of the first. As such these IPS are not monotone, additive or cancellative so ergodicity criteria developed for these types of IPS cannot be applied. Theorem \ref{pro:main-result} also fails here. We believe the reason why is that the method only accounts for the behavior of IPS after one update. Meanwhile, the Walls IPS produce trajectories that seem elongated in the time direction (see Figure~\hyperref[fig:walls]{11} in Section \ref{subsec:walls}), implying that information from just one time-step doesn't capture much of its long-term behavior. \\

However, simulations and heuristics suggest that these Walls IPS should also be \hyperref[def:ergodicity]{ergodic}. If an IPS was not \hyperref[def:ergodicity]{ergodic} one would expect that by decreasing noise in its rule one would get another non-\hyperref[def:ergodicity]{ergodic} IPS. Thus if there was a non-ergodic IPS anywhere in the interior of the parameter space then there should exist a path joining that point with the boundary, along which all IPS are non-ergodic.  Meanwhile, the least noisy, semi-deterministic IPS, located in the vertices of the parameter space, seem to be cut off from the remaining region (see Figure~\hyperref[fig:final_tally]{15}). The simplest case of the Positive Rates Conjecture, now reduced to the problem of proving the ergodicity of Walls IPS requires further analysis, but we believe it to be quite approachable. \\

\section{Appendix}\label{appendix}

This appendix is not necessary to understand the overall picture of the article. We add it to provide clarity of claimed and cited results in Section~\ref{sec:extensions}. The complication comes from the use of different settings and notation, which would have obstructed the understanding of the main ideas. We propose that before reading the subsequent sections readers familiarize themselves with the notation of the work referenced.\\

\bigskip

\subsection{Addendum to Theorem~\ref{pro:additive}: Characterization of additive IPS} \label{sec:char_additive_IPS}

The notation of Griffeath's construction can be found in the "percolation substructures" and "general construction" sections of \cite{Griffeath:78}.
This construction permits IPS which updates many sites at once. The first thing we need to do then is to identify the choices of $V_{i,x}$ and $W_{i,x}$ for which only one site can updated at a time.
It is easy to see that the possible choices for $V_{i,x}$ are
$$V_{i,x} = \vn \textit{  or  } V_{i,x}=\cb{x},$$
and the possible choices of $W_{i,x}$ are
$$W_{i,x}(y)=\left\{
    \begin{array}{lr}
         \cb{y} & y\notin S_x \\
         \cb{x,y}& y\in S_x
    \end{array}
\right. \text{, for any choice of } S_x\subseteq \mathcal{N}_x.$$
We will find it convenient to set the index family $I_x = \cb{0,1}\times 2^{\mathcal{N}_x}$ and set
$$V_{0,S,x} = \vn \textit{, } \qquad V_{1,S,x} = \cb{x},$$
and $W_{i,S,x}$ as above with $S_x = S$.
Each pair $\bb{V_{i,S,x},W_{i,S,x}}$ represents an update by a transition matrix $P_{i,S,x}$ with
\begin{align*}
    &P_{1,S,x}\bb{1\,|\,\sigma}= 1,\\
    &P_{0,S,x}\bb{1\,|\,\sigma} = \left\{
    \begin{array}{lr}
         1 & S \cap \sigma^{-1}\bb{1}\neq \vn \\
         0 & S\cap \sigma^{-1}\bb{1}= \vn
    \end{array}
\right..
\end{align*}
For a given choice of $\lambda_{i,S,x}$ the transition matrix of IPS thus constructed is
$$P_x = \frac{1}{\lambda}\Cb{\bb{\lambda-\lambda_x }I+\desum{i\in\cb{0,1}}{S\subseteq \mathcal{N}_x}\lambda_{i,S,x} P_{i,S,x}}$$
where
\begin{align*}
    \lambda_x & = \desum{i\in\cb{0,1}}{S\subseteq \mathcal{N}_x}\lambda_{i,S,x}, \\
    \lambda & = \sup_{x\in \Z^d}\lambda_x.
\end{align*}
The clock is $\text{Exp}\bb{\lambda}$, but that can be normalised by scaling all $\lambda_{i,S,x}$ by $\frac{1}{\lambda}$.

\bigskip

\subsection{Addendum to Theorem~\ref{pro:cancellative}: Characterization of cancellative IPS} \label{sec:char_cancellative_IPS}

Again the same pairs $\bb{V_{i, S,x}, W_{i, S,x}}$ are available for the construction. However, the transition matrices $P_{i, S,x}$ corresponding to them are different:
\begin{align*}
    &P_{0,S,x}\bb{1\,|\,\sigma} = \abs{S\cap \sigma^{-1}\bb{1}}\bb{\text{mod }2},\\
    &P_{1,S,x}\bb{1\,|\,\sigma} = 1-\abs{S\cap \sigma^{-1}\bb{1}}\bb{\text{mod }2}.
\end{align*}

\section*{Statements and Declarations}
The authors have no relevant financial or non-financial interests to disclose.

\bibliography{sn-bibliography}

\end{document}